\newtheorem{thm}{Theorem}[section]
\newtheorem{lem}[thm]{Lemma}
\newtheorem{cor}[thm]{Corollary}
\newtheorem{defn}[thm]{Definition}
\newtheorem{ques}[thm]{Question}
\newtheorem{rmk}[thm]{Remark}
\title{Certain connect sums of torus knots with infinitely many non-characterizing slopes}
\author{Konstantinos Varvarezos}
\begin{document}
\maketitle

\begin{abstract}
For a knot $K,$ a slope $r$ is said to be characterizing if for no other knot $J$ does $r$-framed surgery along $J$ yield the same manifold as $r$-framed surgery on $K.$ Applying a condition of Baker and Motegi, we show that the knots $T_{2,2n+3}\#T_{-2,2n+1}$ have infinitely many non-characterizing slopes.
\end{abstract}


\section{Introduction}

Given a knot $K$ in $S^3$ and an $r \in \mathbf{Q} \cup \{\infty\},$ we denote the \textit{Dehn surgery} on $K$ with slope $r$ by $S^3_r(K).$  Given a knot $K,$ a slopes $r$ is called \emph{characterizing} for $K$ if $S^3_r(K)\cong S^3_{r}(J)$ implies $K=J.$ 

It is known that all (nontrivial) slopes are characterizing for the unknot \cite{KMOS} as well as for the  the trefoil and the figure-eight knot \cite{OSzChar}. On the other hand, there are several knots known to have non-characterizing slopes; for instance the knots $6_2,$ $6_3$ as well as several other 7- and 8-crossing knots in the tables have been shown to admit infinitely many non-characterizing slopes \cite{AT}. In the case of composite knots, Osoinach showed that 0 is a non-characterizing slope for the knot $4_1 \# 4_1,$ the connect sum of the figure-eight knot with itself \cite{Oso}.

In this work, we consider the family of knots $K_n = T_{2,2n+3}\#T_{-2,2n+1},$ which are connected sums of torus knots.

\begin{figure}
\centering
\includegraphics[width=.4\textwidth]{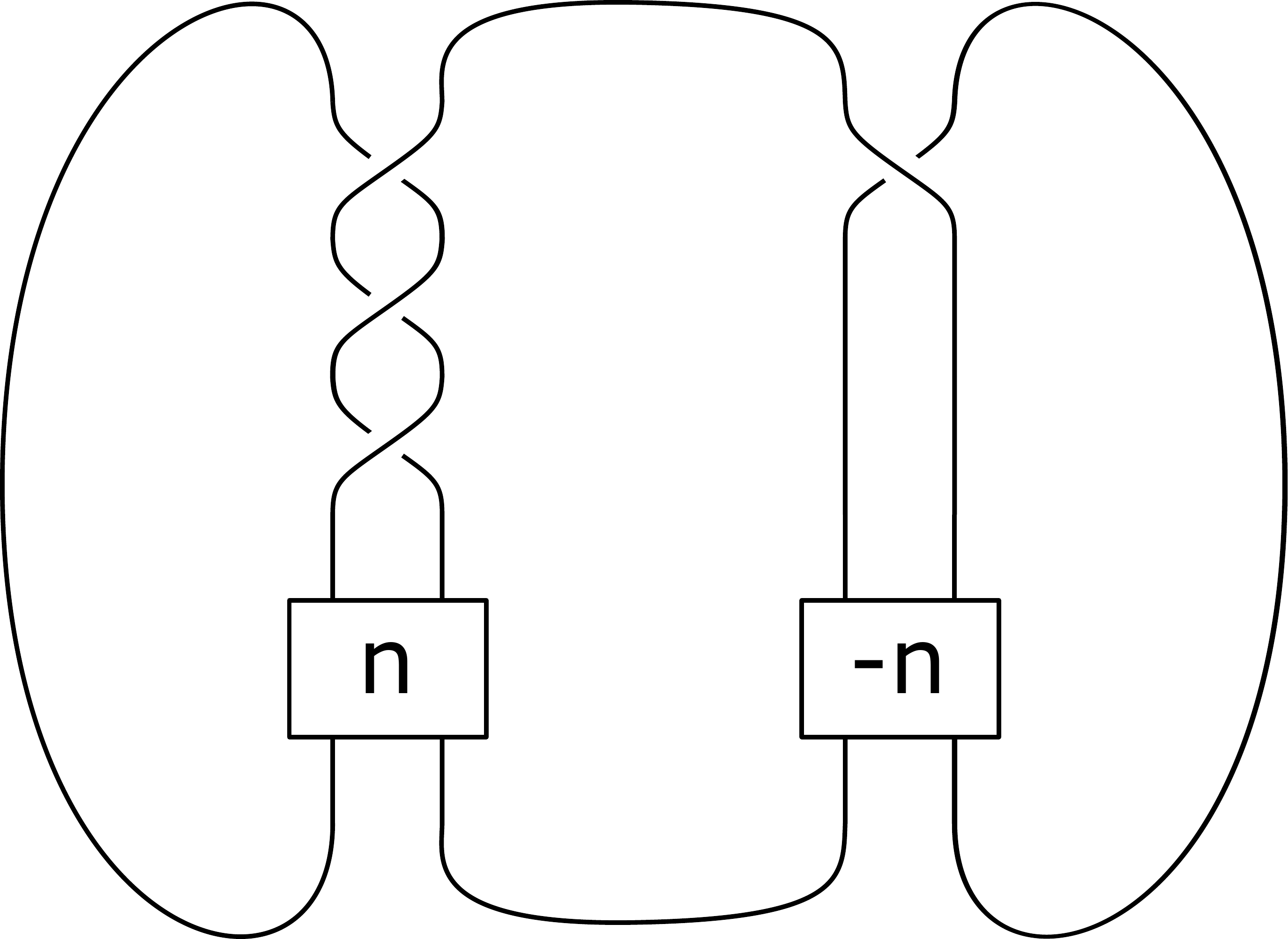}
\caption{The knot $K_n=T_{2,2n+3}\#T_{-2,2n+1}.$ Here the boxes represent $n$ full right-handed or left-handed twists of the strands passing through, depending on whether the number in the box is positive or negative, respectively.}
\label{fig:tcon}
\end{figure}

We show that each knot in this family has infinitely many non-characterizing slopes:

\begin{restatable}{thm}{main}\label{thm:main}
For each positive integer $n,$ the knot $K_n=T_{2,2n+3}\#T_{-2,2n+1}$ has infinitely many non-characterizing slopes.
\end{restatable}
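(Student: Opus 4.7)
The plan is to apply the annulus--twist criterion of Baker and Motegi, which produces infinitely many non-characterizing slopes for a knot admitting a suitable annulus presentation in $S^{3}$. Concretely, one needs to exhibit an annulus $A \subset S^{3}$ with one boundary component equal to $K_n$ and the other an unknot $c$, together with the resulting family of twisted knots $K_n^{(k)}$ (obtained from $K_n$ by twisting $k$ times along $A$, i.e.\ by $-1/k$--surgery on $c$), such that the $r_k$--surgeries on $K_n$ and $K_n^{(k)}$ coincide for a slope $r_k$ determined by the linking data, and such that infinitely many $K_n^{(k)}$ are non-isotopic to $K_n$.

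First, I would locate the annulus $A$ directly from Figure~\ref{fig:tcon}: the twist boxes indicated in the diagram are the natural shadows of such an annulus, with $c$ an unknotted curve encircling the strands passing through a twist region, and with changing the integer in the box corresponding precisely to an annulus twist on $K_n$. With $A$ in hand, I would read off the linking number $m = \mathrm{lk}(K_n, c)$ and the surface framing of $A$; these two numbers determine the slopes $r_k$ that Baker and Motegi's theorem certifies as non-characterizing. A subtle but important check here is that the annulus extracted from the figure actually satisfies \emph{all} hypotheses of the Baker--Motegi criterion (unknotted core, nonzero linking number, compatible framing), since a superficially plausible choice can easily fail one of these.

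Second, I would verify that the family $\{K_n^{(k)}\}_{k \in \mathbb{Z}}$ contains infinitely many knots not isotopic to $K_n$. The cleanest tool is the Alexander polynomial: since $K_n$ is composite, $\Delta_{K_n}(t) = \Delta_{T_{2,2n+3}}(t)\,\Delta_{T_{-2,2n+1}}(t)$ is a very restricted cyclotomic product, while annulus twisting modifies the polynomial according to a skein-type recursion in the parameter $k$, so one can show that only finitely many $k$ produce $\Delta_{K_n}(t)$ (and in particular infinitely many $K_n^{(k)}$ are genuinely different from $K_n$). Alternatively, a Seifert genus computation on the twisted diagram could serve the same purpose.

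I expect this second step to be the main obstacle: ruling out any coincidental periodicity in the twist family and extracting enough invariant data from the specific torus-knot factors to certify that the $K_n^{(k)}$ are pairwise (or at least cofinitely) distinct from $K_n$. Once both steps are in place, the Baker--Motegi criterion immediately yields the infinitely many non-characterizing slopes $r_k$ and completes the proof.
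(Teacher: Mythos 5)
There is a genuine gap, and it lies in your first step: the curve $c$ you propose to extract from Figure~\ref{fig:tcon} cannot satisfy the Baker--Motegi hypotheses. An unknot encircling the strands passing through a twist box has linking number $0$ or $\pm 2$ with $K_n$, whereas for $(0,0)$-surgery on $K_n \cup c$ to yield $S^3$ the linking matrix forces $\mathrm{lk}(K_n,c)=\pm 1$. Equivalently, twisting along such a twist-box circle merely changes the torus-knot parameters (producing, say, $T_{2,2n+2k+3}\#T_{-2,2n+1}$) and does not produce a family of knots sharing a common surgery with $K_n$. The correct companion unknot is not visible in Figure~\ref{fig:tcon} at all: the paper first rewrites $K_n$ as the boundary of a Hopf band with a band properly attached (a \emph{special banded annulus presentation}, Figure~\ref{fig:seq}), and the unknot $\beta_{K_n}$ is the curve associated to this band--annulus structure (Figure~\ref{fig:link_gen}); Lemma~\ref{lem:00} then gives the $(0,0)$-surgery condition. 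This geometric input is the crux of the construction and is absent from your proposal; you correctly flag that a ``superficially plausible choice can easily fail,'' but the choice you offer is exactly such a failure, and no working alternative is identified.

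You have also located the main difficulty in the wrong place. Once the correct link $K_n\cup\beta_{K_n}$ is in hand, Theorem~\ref{thm:BM} reduces everything to showing that $\beta_{K_n}$ is \emph{not isotopic to a meridian} of $K_n$; the eventual distinctness of the twisted knots is handled inside Baker--Motegi's theorem and need not be re-proved via Alexander polynomials of the twist family. The non-meridian verification is where essentially all of the work in the paper goes: one compares the \emph{two-component links} $K_n\cup\beta_{K_n}$ and $K_n\cup\mu$ ($\mu$ a meridian) via the Conway polynomial, resolving the $2n$ crossings indicated in Figure~\ref{fig:link_gen} and computing the coefficient $a_2$ for two auxiliary families of knots $A_n^{\ell,r}$ and $B_n^{\ell,r}$ by a triple induction, arriving at the nonzero value $n(n-1)(2n-7)/3$ for $n\geq 2$ (with $n=1$ treated by a direct computation from knot tables). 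Your Alexander-polynomial step is aimed at a different quantity (distinguishing the twisted knots $K_n^{(k)}$ from $K_n$), and in any case, without the correct curve $c$ there is no twist family with matching surgeries for it to apply to.
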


To the author's knowledge, $K_1 = T_{2,5}\#T_{-2,3}$ is the smallest (in terms of crossing number) composite knot known to have infinitely many non-characterizing slopes. Indeed, it is still unknown whether composite knots with fewer crossings (connect sums involving the trefoils and/or the figure-eight knot) have any non-characterizing slopes.

\begin{ques}
Do any of the knots $3_1 \# 3_1,$ $3_1 \# \overline{3}_1,$ or $3_1 \# 4_1$ have non-characterizing slopes?
\end{ques}

The construction used to show the main result for $K_n$ has a natural extension to connect sums of pairs of 2-bridge knots which differ by a single crossing change. Thus, one might ask:

\begin{ques}
Let $K$ and $J$ be 2-bridge knots differing by a single crossing change. Does $K \# \overline{J}$ have infinitely many non-characterizing slopes?
\end{ques}

\section{Preliminaries}

In this section, we review the relevant definitions and results that will be used in the demonstration of our main result.
Throughout this work, we use the following notational conventions. If $M$ and $M'$ are oriented 3-manifolds, we write $M\cong M'$ to denote that there is an orientation-preserving homeomorphism from $M$ to $M'.$ If $K$ and $J$ are knots in $S^3,$ we write $K=J$ to mean $K$ is isotopic to $J.$ Moreover, we denote the mirror of $K$ by $\overline{K}.$ 

If $p/q \in \mathbf{Q}\cup\{\infty\}$ is a (reduced) fraction (here $\infty = 1/0$), we denote the Dehn surgery on $K$ along the slope $p/q$ by $S^3_K(p/q).$

\subsection{Characterizing slopes}

For a given knot, a rational slope is said to be \emph{characterizing} if surgery on the knot along that slope characterizes the knot. More precisely:

\begin{defn}
Given a knot $K\subset S^3,$ a slope $r\in \mathbf{Q}$ is called a \emph{characterizing slope} for $K$ if for any knot $K'\subset S^3,$
\[
S^3_K(r)\cong S^3_{K'}(r) \Rightarrow K=K'.
\]
\end{defn}

Lackenby has shown that every knot has infinitely many characterizing slopes \cite{Lack}. In the case of hyperbolic knots, McCoy has improved Lackenby's result by providing more effective bounds; in particular all but finitely many slopes $p/q$ with $|q|\geq 3$ will be characterizing \cite{McC2}.

For the simplest knots, it turns out that all rational slopes are characterizing. In particular, Kronheimer, Mrowka, Ozsv\'ath and Szab\'o showed this is true for the unknot \cite{KMOS}, and Ozsv\'ath and Szab\'o proved it for the trefoil and figure-eight knot \cite{OSzChar}.

On the other hand, plenty of knots are known to have slopes which are not characterizing. Osoinach gave a general method, called annulus twisting, of producing pairs of knots (in fact, an infinite family of knots) with homeomorphic 0-surgeries \cite{Oso}. As a consequence of this method, it was shown that 0 is not a characterizing slope for the knot $4_1\#4_1.$ Teragaito applied this method to find infinitely many knots sharing $4$-surgery with the knot $9_{42},$ hence $4$ is non-characterizing for this knot \cite{Ter}. It was later shown by Abe, Jong, Luecke, and Osoinach that every integer $n$ is a non-characterizing slope for infinitely many knots (in fact, there is an infinite family of knots all with homeomorphic $n$-surgeries) \cite{AJLO}.  Using a different construction, Piccirillo, in the process of showing the Conway knot is not slice, found that 0 is not a characterizing slope for that knot \cite{Pic}. That method, which makes use of what is called an RGB link, has been shown to be related to the annulus twist and to the methods used in this work \cite{Tag}.

Baker and Motegi have given a criterion for showing a knot has \emph{infinitely many} non-characterizing slopes, which they used to show the knots $9_{42}, $ $P(-3,3,5), $ and $8_6$ have this property \cite{BM}. Moreover, they showed that every knot is the companion of some prime satellite with infinitely many non-characterizing slopes, and similarly every knot is the summand of a composite knot with this property. Abe and Tagami applied these methods to show that the knots $6_1$ and $6_2$ (as well as several other low-crossing knots) have infinitely many non-characterizing slopes \cite{AT}. These are the lowest crossing-number knots known to have infinitely many non-characterizing slopes. 

Recently, Baldwin and Sivek have shown that the knot $5_2$ has at least one non-characterizing slope, while also showing that the same knot has many characterizing slopes \cite{BS}. In particular, every rational slope except the positive integers was shown to be characterizing for this knot. Work of Ni and Zhang has shown that the knot $5_1=T_{2,5}$ also has a preponderance of characterizing slopes; indeed, all slopes are characterizing except possibly the negative integers and the slopes $0,\pm \frac{1}{2},\pm\frac{1}{3},1$ \cite{NZ,NZ2}. McCoy has shown similar kinds of results for torus knots more generally as well as for the hyperbolic knot $P(-2,3,7)$ \cite{McC,McC3}.

\subsection{A theorem of Baker and Motegi}

Here we present the main result that we shall use to show the existence of infinitely many non-characterizing slopes, which is the following theorem of Baker and Motegi:

\begin{thm}[Theorem 1.3 of \cite{BM}]\label{thm:BM}
Suppose $K$ and $c$ are the components of a two-component link in $S^3$ with $c$ an unknot such that:
\begin{itemize}
\item $(0,0)$-framed surgery on the link $K \cup c$ yields $S^3,$ and
\item $c$ is not isotopic to a meridian of $K.$
\end{itemize}
Then $K$ has infinitely many non-characterizing slopes.
\end{thm}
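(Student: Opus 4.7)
The plan is to leverage the unknottedness of $c$ to build a family of knots $\{K_n\}_{n\in\mathbb{Z}}$ in $S^3$ by repeatedly twisting $K$ along a disk bounded by $c$, and then to use the $(0,0)$-surgery hypothesis to match Dehn surgeries on infinitely many $K_n$'s with Dehn surgeries on $K$ at a corresponding infinite sequence of slopes. The ``not a meridian'' hypothesis will then be used to confirm that infinitely many of the $K_n$'s are genuinely distinct from $K$, so that the matched slopes are non-characterizing.

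Since $c$ is an unknot, a $(-1/n)$-Dehn surgery along $c$ yields $S^3$ for each $n\in\mathbb{Z}$; under this re-identification $K$ becomes a knot $K_n\subset S^3$, with $K_0=K$. A linking-matrix computation on the $(0,0)$-surgery forces $\ell:=\operatorname{lk}(K,c)$ to satisfy $\ell^2=1$, since $|H_1(S^3)|=|{-\ell^2}|=1$. Tracking how the Seifert longitude of $K_n$ in the new $S^3$ is expressed against the fixed basis $(\mu_K,\lambda_K)$ on $\partial\mathrm{nbhd}(K)$ yields a slope-translation formula of the form
\[
S^3_{K_n}(r)\;\cong\;S^3_{K\cup c}\bigl(r-n\ell^2,\;-1/n\bigr),
\]
so that questions about surgeries on the variable knots $K_n$ are reduced to questions about two-component surgeries on the fixed link $K\cup c$.

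The technical core is to exploit the $(0,0)$-surgery hypothesis --- via a handle-slide or slam-dunk argument on the framed link $K\cup c$ --- to recognize the right-hand side as a one-component surgery on a knot canonically related to $K$. Since the $(0,0)$-framed link $K\cup c$ is Kirby-equivalent to the empty diagram (its surgery being $S^3$), there is a sequence of handle moves that, when applied to $(r-n\ell^2,-1/n)$-framed $K\cup c$, produces an $r(n)$-framed presentation of $K$ itself. Carrying this out will give $S^3_{K_n}(r(n))\cong S^3_K(r(n))$ for an infinite family of slopes $r(n)$.

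Finally, the ``not a meridian'' hypothesis is used to ensure that infinitely many $K_n$ are not equal to $K$: if $c$ were isotopic to $\mu_K$, the twist along a disk bounded by $c$ would only change the framing of $K$ and leave its knot type unchanged. Ruling this out, $c$ is a non-boundary-parallel simple closed curve in the knot exterior $S^3\setminus K$, and by an invariant argument (the Alexander polynomial, knot Floer homology, or hyperbolic volume when $S^3\setminus(K\cup c)$ is hyperbolic), only finitely many of the $K_n$ can coincide with $K$. Combining with the previous paragraph, the slopes $r(n)$ for which $K_n\neq K$ furnish infinitely many non-characterizing slopes for $K$. The main obstacle is the Kirby-calculus step: precisely tracking framings and identifying the handle moves that consume the $(0,0)$-hypothesis; a secondary subtlety is the nontriviality of the family $\{K_n\}$, where the ``not a meridian'' condition alone does not immediately yield infinite distinctness without an auxiliary invariant argument.
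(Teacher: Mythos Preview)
The paper does not prove this statement; Theorem~\ref{thm:BM} is quoted as Theorem~1.3 of Baker--Motegi and invoked as a black box, with only the parenthetical remark that their argument in fact shows all but finitely many \emph{integer} slopes are non-characterizing. There is therefore no proof in the present paper against which to compare your proposal.

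Evaluated on its own terms, your sketch has the right broad outline --- twist $K$ along $c$, match surgeries, use the non-meridian hypothesis to distinguish knots --- but contains a genuine gap at precisely the step you yourself flag as the ``main obstacle.'' The assertion that ``since $(0,0)$-framed $K\cup c$ is Kirby-equivalent to the empty diagram, there is a sequence of handle moves that, when applied to $(r-n,-1/n)$-framed $K\cup c$, produces an $r(n)$-framed presentation of $K$ itself'' is not a valid principle of Kirby calculus: handle slides and blow-downs are framing-specific, and a sequence that trivializes one framed link says nothing \emph{a priori} about the same underlying link carrying different framings. Nor is the target identity $S^3_{K_n}(r(n))\cong S^3_K(r(n))$ what the $(0,0)$-hypothesis actually delivers; the twists $K_n$ of $K$ along $c$ are not, in general, the knots that share surgeries with $K$. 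The Baker--Motegi mechanism instead passes through the \emph{surgery dual}: the hypothesis produces a second knot $K'\subset S^3$ with $S^3_{K'}(0)\cong S^3_K(0)$ (the core of the $c$-filling in $S^3_{K\cup c}(0,0)=S^3$), and it is a twist family built from $K'$ and the image of a meridian of $K$, rather than from $K$ and $c$, that furnishes the companions $J_m$ with $S^3_{J_m}(m)\cong S^3_K(m)$. Your proposal conflates these two twist families.

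Your final step is likewise incomplete as written. Knowing that $c$ is not isotopic to $\mu_K$ rules out the degenerate case in which twisting is trivial, but it does not by itself force the relevant twist family to realize infinitely many distinct knot types; Baker--Motegi supply this via a separate geometric input (a result of Kouno--Motegi--Shibuya type on twist families, or Thurston's hyperbolic Dehn surgery when the link exterior is hyperbolic). Your ``by an invariant argument'' gestures toward this but does not provide it.
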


The proof of this theorem actually shows that such a knot $K$ has infinitely many \emph{integral} non-characterizing slopes. Moreover, it shows that all but possibly finitely many integer slopes are non-characterizing, although \emph{a priori} the particular integers may depend on the knot $K.$

\subsection{Banded annulus presentations}

Following \cite{AJOT,AT2,AT,Tag}, we describe the construction of a banded annulus presentation for a knot.
Let $A\subset S^3$ be an embedded annulus.
 A \emph{band} is an embedding $b:[0,1]\times [0,1] \hookrightarrow S^3.$ We say that a band $b$ is \emph{properly attached} to $A$ if:
 \begin{itemize}
 \item  $b\left(\{0,1\}\times [0,1]\right) = \partial A \cup \mathrm{Im}(b)$
 \item $\mathrm{Im}(b) \cap \mathrm{Int}( A)$ consists only of ribbon singularities, and
 \item $A\cup \mathrm{Im}(b)$ is an immersed surface\footnote{Some authors require this surface to be orientable, but we shall not require this; see Remark \ref{rmk:ori} below.}
 \end{itemize}

See Figure \ref{fig:banded_annulus_ex} for an example illustration.

A knot $K$ is said to admit a \emph{banded annulus presentation} if $K$ is the boundary of $A\cup \mathrm{Im}(b)$ for some embedded annulus $A\subset S^3$ and band $b$ properly attached to $A.$ Further, a \emph{special} banded annulus presentation is one such that the annulus $A$ is a Hopf band.

\begin{figure}
\centering
\begin{subfigure}{.45\textwidth}
\centering
\includegraphics[scale=.3]{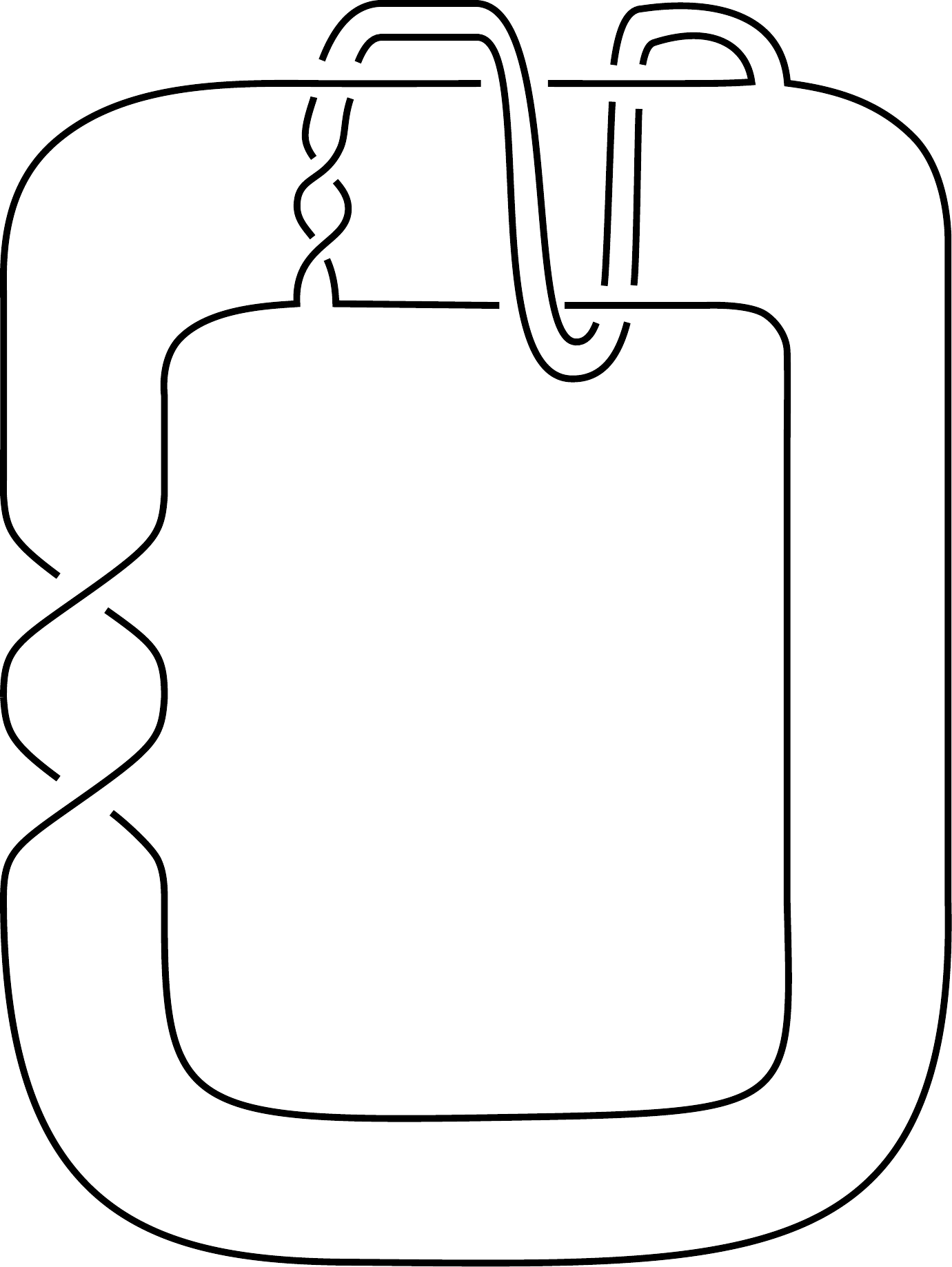}
\caption{the knot $T_{2,5}\#T_{-2,3}$}
\end{subfigure}
\begin{subfigure}{.45\textwidth}
\centering
\includegraphics[scale=.3]{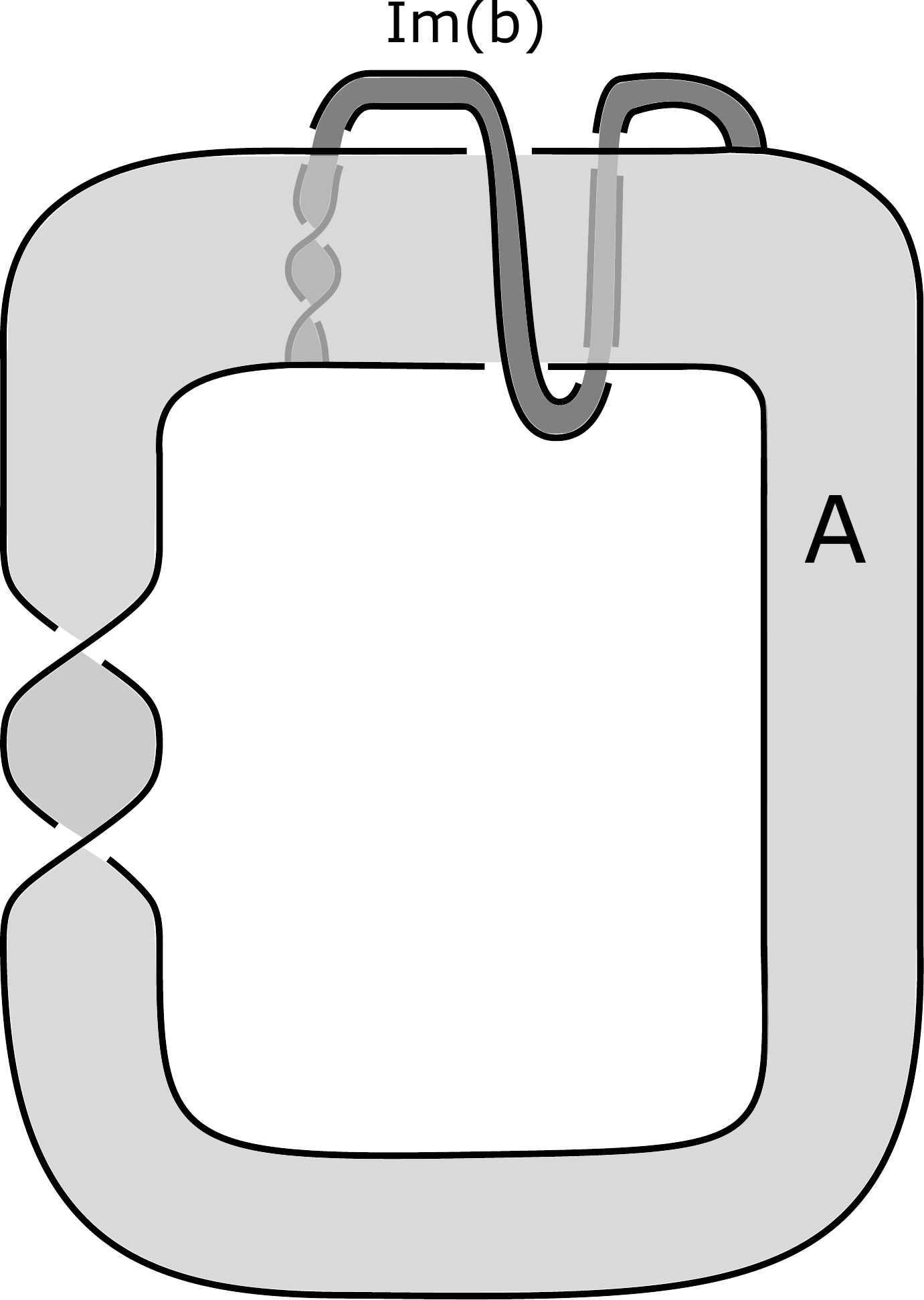}
\caption{annulus with band properly attached}
\end{subfigure}
\caption{The knot $T_{2,5}\#T_{-2,3}$ (left) expressed as the boundary of a banded annulus (right)}
\label{fig:banded_annulus_ex}
\end{figure}

Knots arising as the boundaries of a special banded annului will be of interest to us because such knots admit in general interesting ``companion" unknots. More precisely, if $K$ admits a special banded annulus presentation, there is an unknot $\beta_K \subset S^3 \setminus K$ which will satisfy (at least) half of the Baker-Motegi criterion (this construction can also be interpreted in terms of the so-called dualizable patterns developed by Gompf and Miyazaki \cite{GM} and used by Miller and Piccirillo \cite{MP}):
\begin{lem}
[Lemma 4.4 of \cite{AT}; see also Section 5 of \cite{MP}]\label{lem:00}
Let $K\subset S^3$ be a knot that admits a special banded annulus presentation, and let $\beta_K$ be an unknot in the exterior of $K$ as in Figure \ref{fig:banded_annulus_sch}. Then $(0,0)$-surgery on the link $K\cup \beta_K$ yields $S^3.$
\end{lem}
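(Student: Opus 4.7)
The plan is to perform the two surgeries sequentially: first the $0$-surgery on the unknot $\beta_K,$ which converts $S^3$ into $S^2\times S^1,$ and then the $0$-surgery on the image of $K$ in that manifold. The goal is to show that after the first step, $K$ becomes the standard fiber $\{\mathrm{pt}\}\times S^1\subset S^2\times S^1$ with the framing for which $0$-surgery recovers $S^3.$

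I would begin by reading off the placement of $\beta_K$ from Figure \ref{fig:banded_annulus_sch}: in the standard special banded annulus setup, $\beta_K$ is an unknot bounding a disk $D\subset S^3$ that meets the Hopf band $A$ transversely in a single arc and is disjoint from $\mathrm{Im}(b).$ The key geometric point is that $A$ is a Hopf band, carrying exactly $\pm 1$ twist relative to its core. Performing $0$-surgery on $\beta_K$ has the effect of cutting $S^3$ along $D$ and regluing with a twist, and this twist precisely trivializes $A$: after the surgery, $A$ becomes an unknotted, untwisted annulus inside $S^2\times S^1,$ with $\mathrm{Im}(b)$ carried along rigidly. Hence in $S^2\times S^1$ the knot $K$ still bounds $A\cup\mathrm{Im}(b)$ but with $A$ now trivial, and collapsing the band across this trivial annulus exhibits $K$ as isotopic to the standard fiber.

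For the second surgery, I would invoke the standard fact that $0$-surgery on the fiber of $S^2\times S^1$ with the framing coming from a meridian disk of the complementary solid torus returns $S^3$: the torus splitting $S^2\times S^1$ into two solid tori has coincident meridians, so the gluing used in this surgery has determinant $\pm 1.$ Combining the two steps gives $S^3_{K\cup\beta_K}(0,0)\cong S^3.$

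The main technical obstacle is tracking framings through the first surgery: one must verify that the $0$-framing of $K$ in $S^3,$ which coincides with the surface framing of $A\cup\mathrm{Im}(b)$ (the ribbon singularities of $b$ contributing no net linking), is identified under the annulus-twist operation with the $S^3$-yielding framing on the fiber. This amounts to a direct linking-number calculation in the Kirby diagram, using that the $\pm 1$ self-linking of the Hopf band is exactly compensated by the twist introduced by $0$-surgery on $\beta_K.$
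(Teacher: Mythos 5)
The paper does not actually reprove this lemma: it is imported from Lemma 4.4 of \cite{AT} and Section 5 of \cite{MP}, with Remark \ref{rmk:ori} noting that in the case at hand the conclusion can be checked directly by Kirby calculus on Figure \ref{fig:link_gen}. Your overall strategy --- surger $\beta_K$ first to obtain $S^2\times S^1$, then show that $K$ has become the standard fiber with the $S^3$-yielding framing --- is a legitimate reformulation (by Gabai's Property R it is essentially \emph{equivalent} to the statement), but the step in which you establish it rests on an error. You assert that ``performing $0$-surgery on $\beta_K$ has the effect of cutting $S^3$ along $D$ and regluing with a twist.'' That is the description of $(\pm 1)$-surgery on an unknot (a Rolfsen twist), which is a self-homeomorphism of $S^3$; $0$-surgery on an unknot is not a twist along $D$ at all. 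Consequently the claim that this operation ``trivializes the Hopf band'' and carries $K$ to the fiber has no justification. Since a knot in $S^2\times S^1$ generating $\pi_1$ need not be isotopic to the fiber (tie a local knot in the fiber), this isotopy is precisely the content of the lemma and cannot be asserted for free.

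There is a second problem with the configuration you read off from Figure \ref{fig:banded_annulus_sch}. If $\beta_K$ bounded a disk $D$ meeting $K$ transversely in a single point --- which is what your picture of $D$ meeting the surface in one arc, with the band disjoint, would force --- then shrinking $D$ toward that intersection point would exhibit $\beta_K$ as a meridian of $K$. That would violate the second hypothesis of Theorem \ref{thm:BM} and make the construction useless; the entire remainder of the paper is devoted to proving that $\beta_K$ is \emph{not} a meridian. In reality $K$ meets the disk of $\beta_K$ algebraically once but geometrically several times, so no light-bulb-type isotopy is available. The cited proofs instead proceed by Kirby calculus: one slides $K$ over the $0$-framed $\beta_K$ to undo the band $b$ and the clasp of the Hopf band, reducing $K\cup\beta_K$ with framings $(0,0)$ to the $(0,0)$-framed Hopf link, whose linking matrix has determinant $-1$ and whose surgery is $S^3$ (equivalently, one uses the dualizable-pattern formalism of \cite{GM} and \cite{MP}). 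To salvage your two-step framework you would need to supply that handle-slide argument to justify that $K$ becomes the fiber with the correct framing.
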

\begin{rmk}\label{rmk:ori}
In \cite{AT} and \cite{MP}, a banded annulus is assumed to yield an \emph{orientable} surface. However, orientability is not actually necessary for this particular result. Indeed, that $(0,0)$-surgery on $K_n\cup \beta_{K_n}$ yields $S^3$ can be seen directly in our case from an application of Kirby calculus to Figure \ref{fig:link_gen}.
\end{rmk}
\begin{figure}
\centering
\includegraphics[scale=.3]{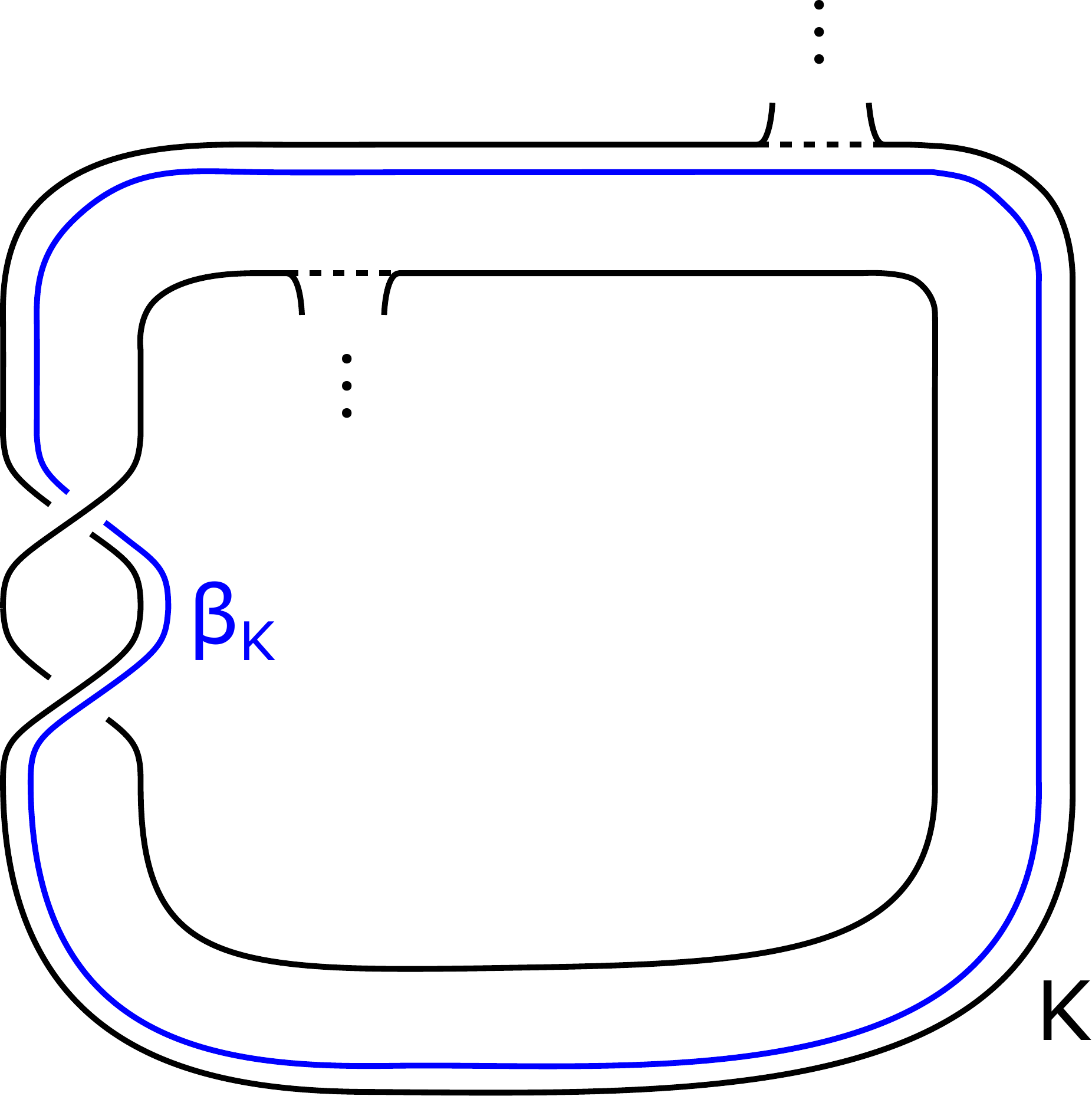}
\caption{Schematic showing the ``companion" unknot $\beta_K$ corresponding to a knot $K$ admitting a special banded annulus presentation. The dashed regions represent the places where the band attaches to the boundary of the annulus.}
\label{fig:banded_annulus_sch}
\end{figure}

We claim that each of the knots $K_n = T_{2,2n+3}\#T_{-2,2n+1}$ admits a special banded annulus presentation. This can be seen in Figure \ref{fig:seq}, where performing a band surgery along the blue band yields $K_n.$

\begin{figure}
\centering
\includegraphics[width=\textwidth]{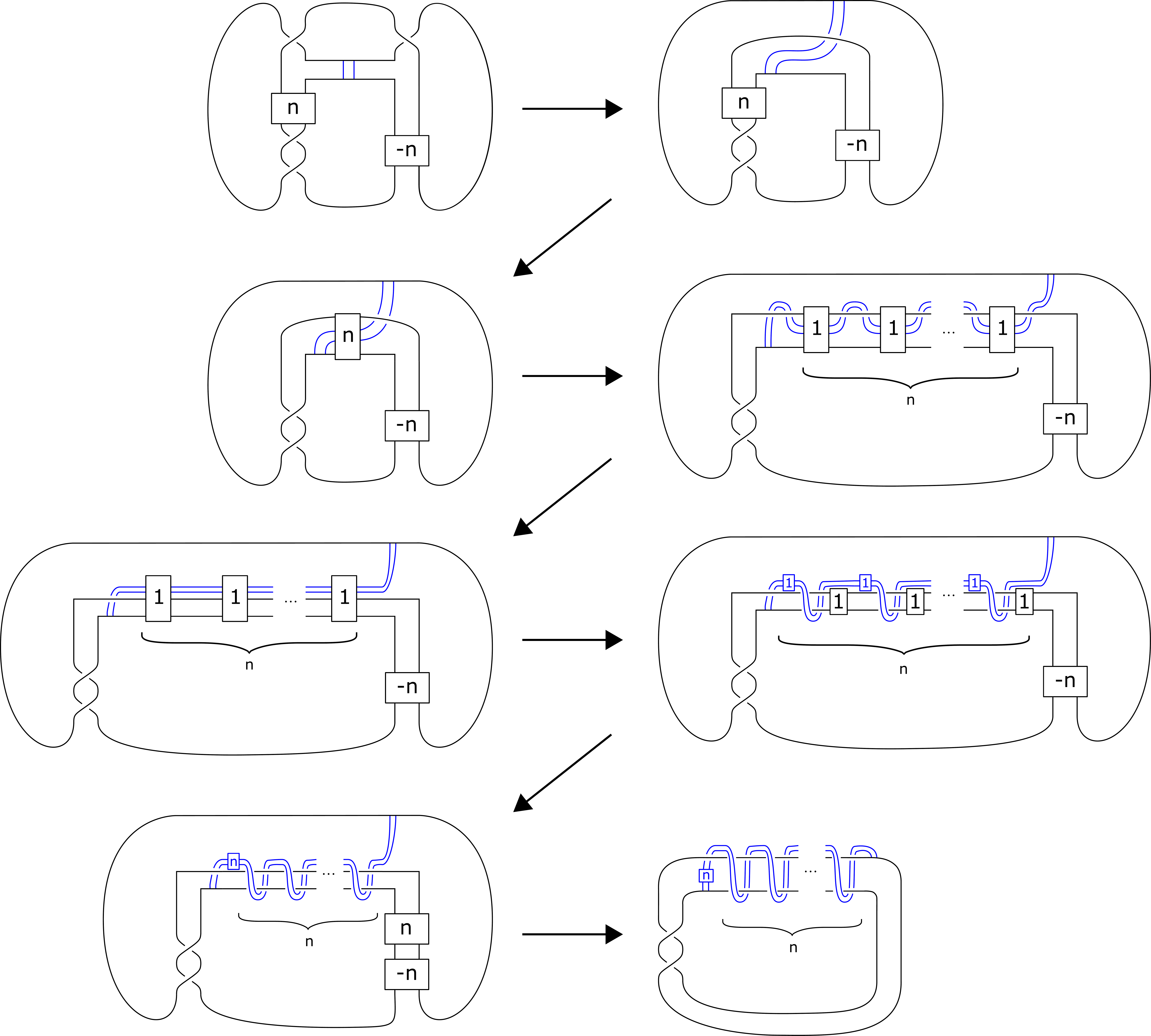}
\caption{A sequence of ``moves" demonstrating that $K_n$ admits a special banded annulus presentation.}
\label{fig:seq}
\end{figure}

We thus have:
\begin{lem}\label{lem:banded}
For each positive integer $n,$ the knot $K_n = T_{2,2n+3}\#T_{-2,2n+1}$ admits a special banded annulus presentation.
\end{lem}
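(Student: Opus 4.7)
The plan is to exhibit an explicit Hopf band $A\subset S^3$ together with a band $b$ properly attached to $A$ whose union has boundary isotopic to $K_n$, and then verify this identification by a sequence of diagrammatic moves. Because ``special'' means precisely that $A$ is a Hopf band, taking $A$ to be the standard once-twisted fiber annulus of a Hopf link will automatically take care of that half of the definition.

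Concretely, I would take for $A$ the standard fiber annulus of the positive Hopf link (one full positive twist). The band $b$ is then constructed so that it travels along the core of $A$, crossing through $A$ to create the requisite ribbon singularities, and is threaded so that on one side of $A$ it accumulates $n$ positive full twists while on the other side it accumulates $n$ negative full twists. These numbers are exactly what is needed to match the boxes of $\pm n$ full twists in the diagram of Figure \ref{fig:tcon}.

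To check that $\partial(A\cup\mathrm{Im}(b))$ is indeed $K_n$, I would run through the sequence of moves in Figure \ref{fig:seq}. Reversing the process, the band surgery along the blue band converts the banded annulus into a knot which, after a short sequence of planar isotopies and Reidemeister II/III moves, deforms into the standard diagram of $T_{2,2n+3}\#T_{-2,2n+1}$. The case $n=1$ is displayed in Figure \ref{fig:banded_annulus_ex}, and the general case follows from the uniform structure of Figure \ref{fig:seq}: increasing $n$ by $1$ simply adds an extra pair of twists inside each box without altering any of the moves, so a short induction on $n$ closes the argument.

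The main obstacle is the bookkeeping required to ensure that both summands acquire the correct number \emph{and sign} of twists simultaneously, and to check that $b$ meets $\mathrm{Int}(A)$ only in ribbon singularities rather than in clasp intersections. Both are verified by direct inspection of the figures; once the construction is confirmed for a single value of $n$, uniformity of the presentation in $n$ yields the lemma for all positive integers.
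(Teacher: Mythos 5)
Your proposal is correct and takes essentially the same route as the paper: the paper's entire argument for this lemma is the explicit diagrammatic construction in Figure \ref{fig:seq}, namely exhibiting a Hopf band with a properly attached band whose boundary is carried to the standard diagram of $T_{2,2n+3}\#T_{-2,2n+1}$ by a sequence of isotopies, uniformly in $n$. Your additional remarks about tracking the sign of the twists and checking that the singularities are ribbon rather than clasp are exactly the points one must verify by inspection of that figure, so there is no substantive difference in approach.
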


\subsection{The Conway polynomial}

Here we recall some useful definitions and facts about the Conway polynomial for an oriented link $L\subset S^3.$ The \emph{Conway polynomial} of $L,$ denoted $\nabla_L,$ is characterized by the following equation:
\[
\nabla_L(t^{-1/2}-t^{1/2}) = \Delta_L(t)
\]
where $\Delta_L(t)$ is the symmetric, normalized Alexander polynomial of $L.$ It is known that the Conway polynomial satisfies the following skein relation. If oriented links $L_+,$ $L_-,$ and $L_0$ differ locally as in Figure \ref{fig:skein}, then:
\begin{equation}\label{eq:conway_skein}
\nabla_{L_+}(z)-\nabla_{L_-}(z)=z\nabla_{L_0}(z).
\end{equation}

\begin{figure}
\centering
\includegraphics[width=.5\textwidth]{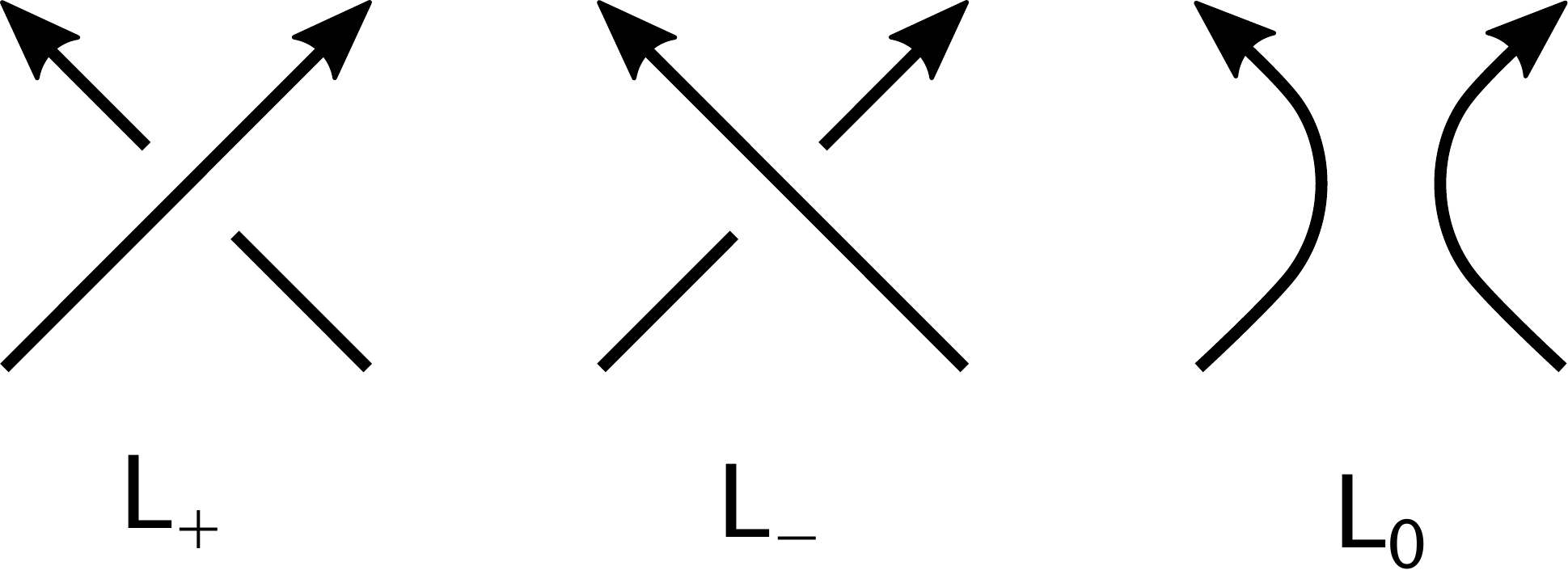}
\caption{The local differences of a links appearing in the skein relation.}
\label{fig:skein}
\end{figure}

If $K$ is a knot, then it is known that the Conway polynomial of $K$ only has terms with even powers:
\[
\nabla_K(z)=1 + \sum_{k=1}^{N} a_{2k}(K)z^{2k}
\]
for some $N$ and coefficients $a_{2k}(K),$ which are integral invariants of $K.$ Moreover, the Conway polynomial behaves analogously to the Alexander polynomial with respect to connect sums. If $J$ and $K$ are knots, then:
\begin{equation}\label{eq:conway_sum}
\nabla_{J\#K}(z) = \nabla_J(z)\cdot\nabla_K(z)
\end{equation}

Similarly, if $L$ is a two-component link, then its Conway polynomial has only odd power terms:
\[
\nabla_L(z)= \sum_{k=1}^{N} a_{2k-1}(L)z^{2k-1}
\]
for some $N.$ Moreover, it is known that $a_1(L)$ is equal to the linking number $lk(L).$
As a consequence of this and the skein relation \eqref{eq:conway_skein}, if $L_+$ and $L_-$ are oriented \emph{knots} such that there are diagrams where the two differ only locally near a crossing as in Figure \ref{fig:skein}, then
\begin{equation}\label{eq:a2_skein}
a_2(K_+)-a_2(K_-) = lk(L_0).
\end{equation}
It follows from \eqref{eq:conway_sum} that $a_2$ is additive under connect sum:
\[
a_2(K_1 \# K_2) = a_2(K_1)+a_2(K_2).
\]

\section{The case of $K_1$}

Here we prove Theorem \ref{thm:main}  for the special case of $n=1;$ namely, that the knot $K_1=T_{2,5}\#T_{-2,3}$ admits infinitely many non-characterizing slopes.

This will demonstrate the general approach which will also be used in the general case, with some modification.

In light of Lemmas \ref{lem:00} and \ref{lem:banded}, we can associate to $K_1$ an unknot $\beta_{K_1}$ in the complement of $K_1$ such that performing 0-framed surgery on both $K_1$ and $\beta_{K_1}$ yields $S^3$ (see Figure \ref{fig:link_sp}).
By  Theorem \ref{thm:BM}, it suffices to show that the unknot $\beta_{K_1}$ is not isotopic to a meridian of $K_1.$ If it were isotopic to a meridian, then, with the orientations as in Figure \ref{fig:link_sp}, such an isotopy would preserve linking number, which is 1 in this case. Notice that performing the indicated crossing changes in Figure \ref{fig:link_sp} yields the link, which we denote $L_1,$ which is the union of $K_1$ with an unknot isotopic to a meridian of $K_1,$ also with linking number 1 (see Figure \ref{fig:L1}). Hence, if $\beta_{K_1}$ were isotopic to a meridian of $K_1,$ then $K_1 \cup \beta_{K_1}$ and $L_1$ would be isotopic as \emph{oriented} links.
\begin{figure}
\centering
\begin{subfigure}{.45\textwidth}
\centering
\includegraphics[width=.75\textwidth]{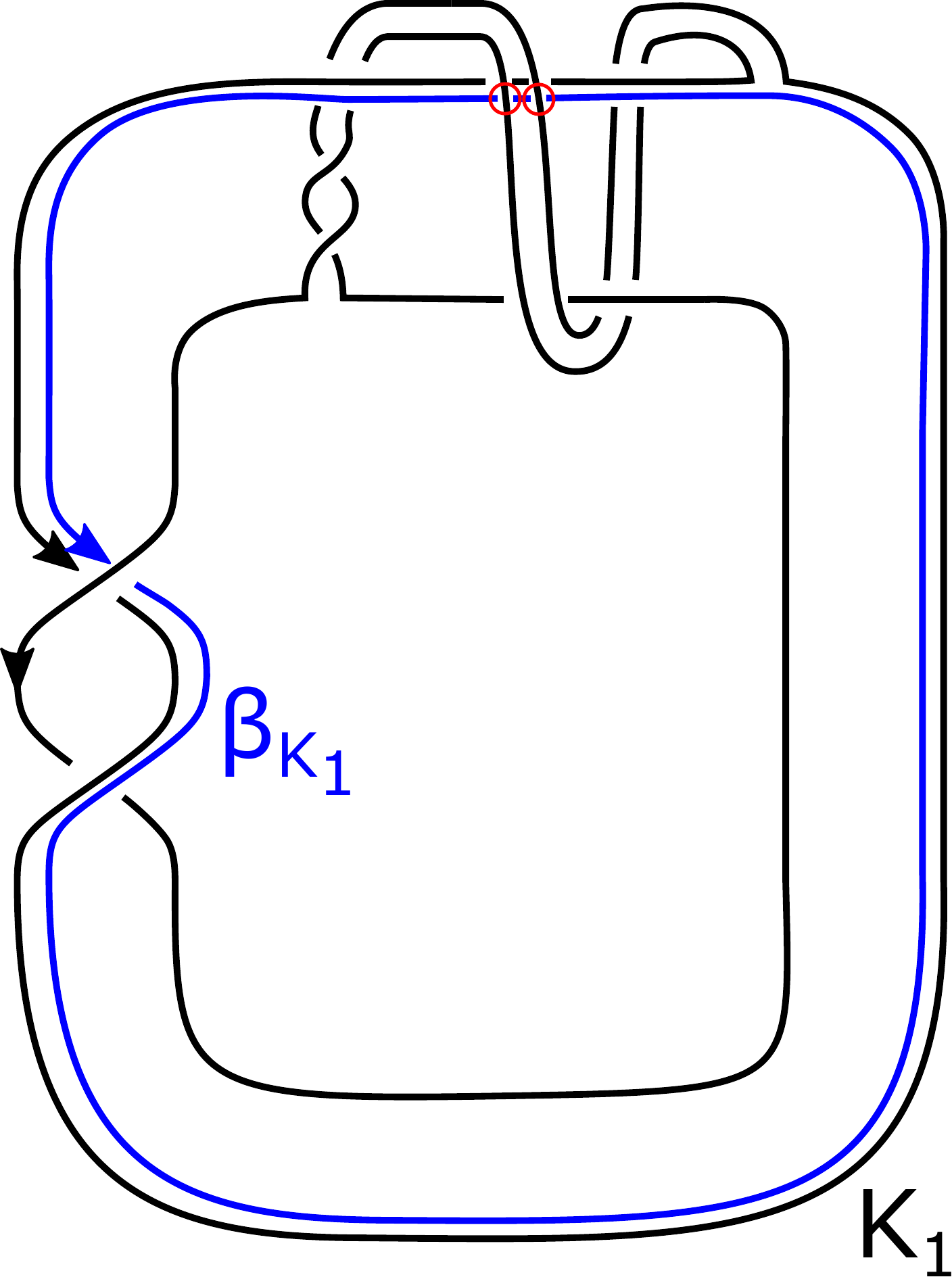}
\caption{The link $K_1 \cup \beta_{K_1}.$}
\label{fig:link_sp}
\end{subfigure}
\begin{subfigure}{.45\textwidth}
\centering
\includegraphics[width=.75\textwidth]{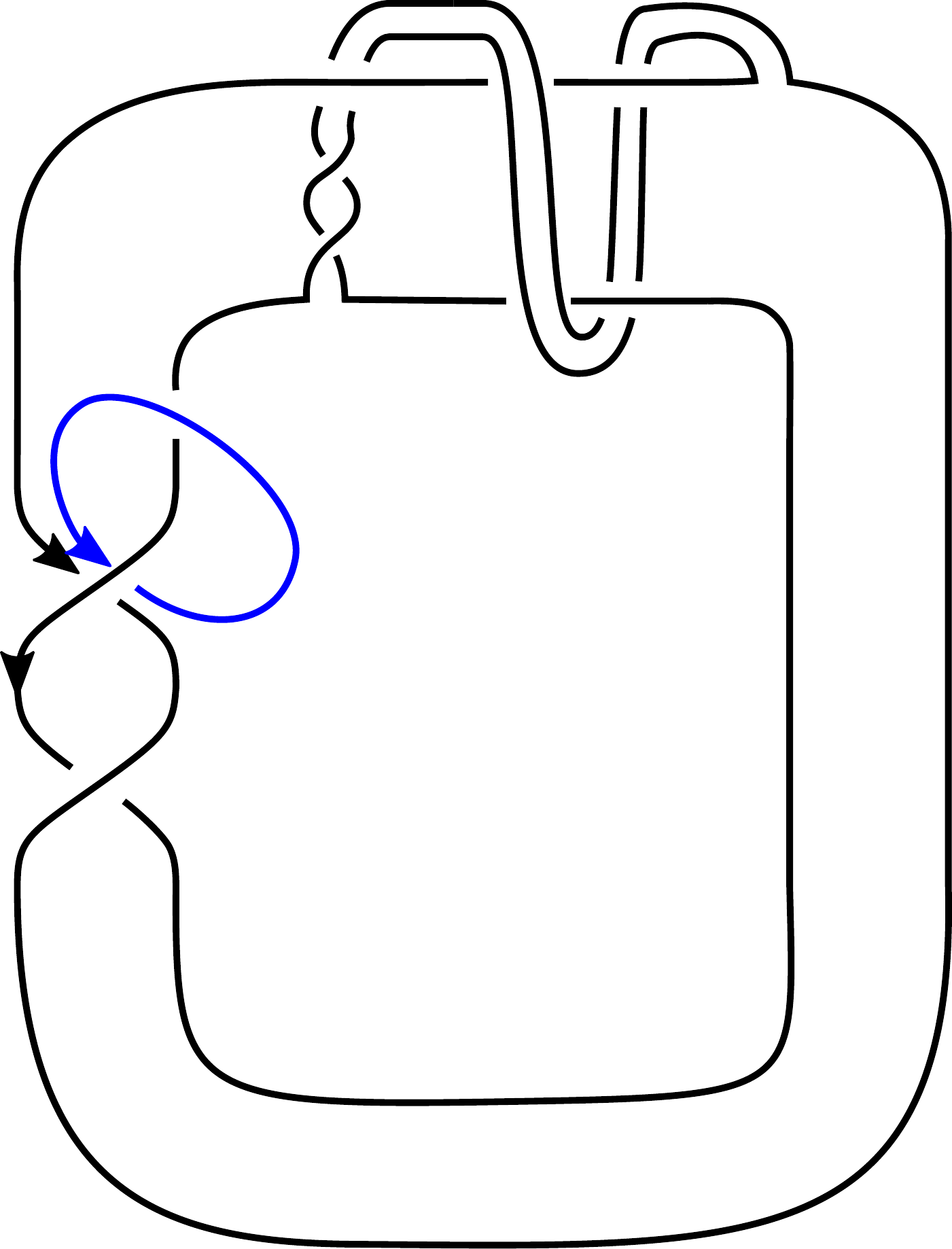}
\caption{The link $L_1,$ which is $K_1$ together with an unknot isotopic to its meridian.}
\label{fig:L1}
\end{subfigure}
\caption{Changing the indicated crossings makes the unknot $\beta_{K_1}$ isotopic to a meridian of $K_1.$}
\end{figure}

We show this is impossible by demonstrating that the two links have different Conway polynomials. Indeed, applying \eqref{eq:conway_skein} to the two crossing changes mentioned above yields:
\begin{equation}\label{eq:conway_K1}
\nabla_{K_1 \cup \beta_{K_1}}(z)-\nabla_{L_1}(z)=z\left(\nabla_{A_1^{0,0}}(z)-\nabla_{B_1^{0,0}}(z)\right).
\end{equation}
Here $A_1^{0,0}$ and $B_1^{0,0}$ are the knots in Figure \ref{fig:A1B1}; the notation follows the more general families of knots given in Section \ref{sec:gen}.
\begin{figure}
\centering
\begin{subfigure}{.45\textwidth}
\centering
\includegraphics[width=.75\textwidth]{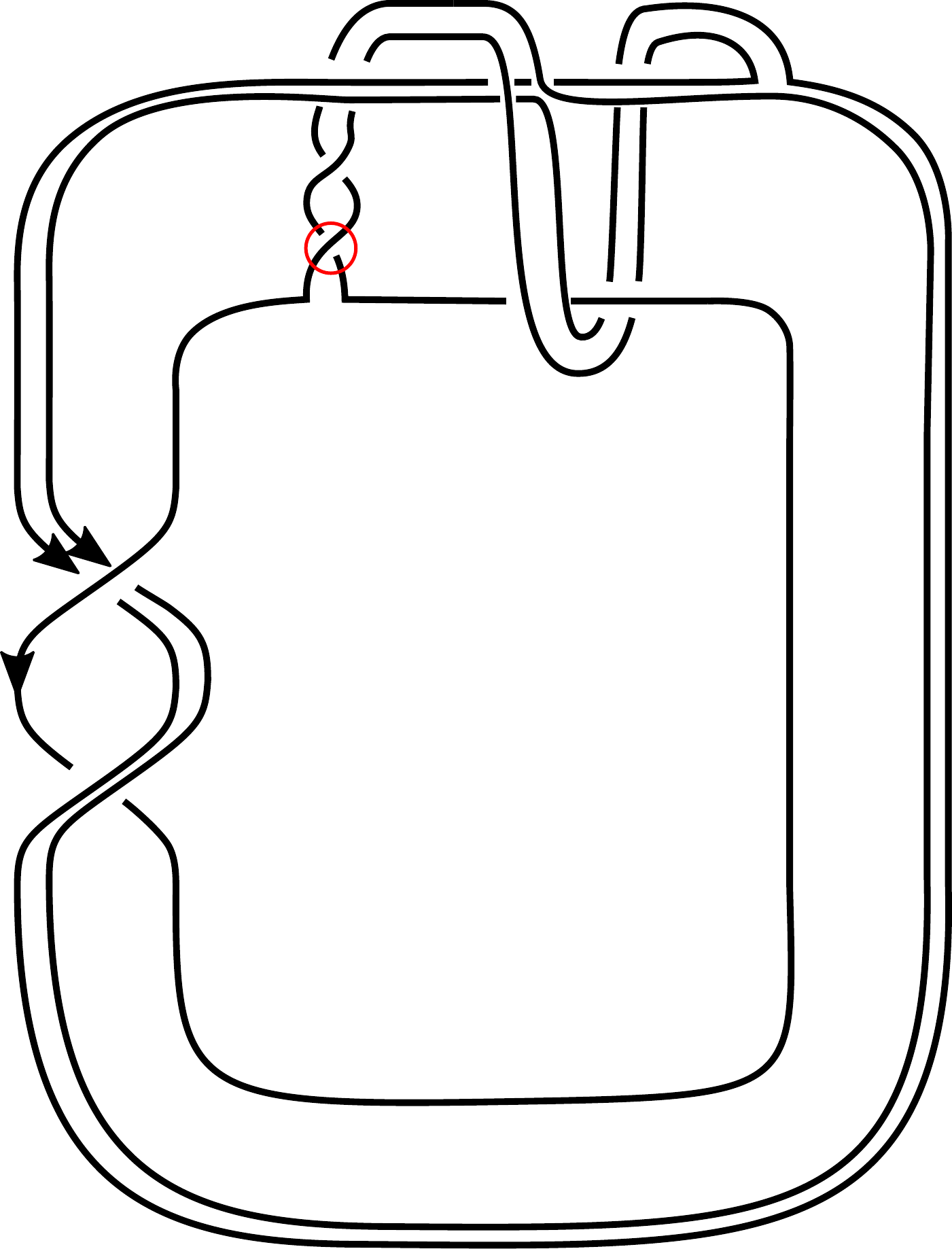}
\caption{$A^{0,0}_1$}
\label{fig:A001_ind}
\end{subfigure}
\begin{subfigure}{.45\textwidth}
\centering
\includegraphics[width=.75\textwidth]{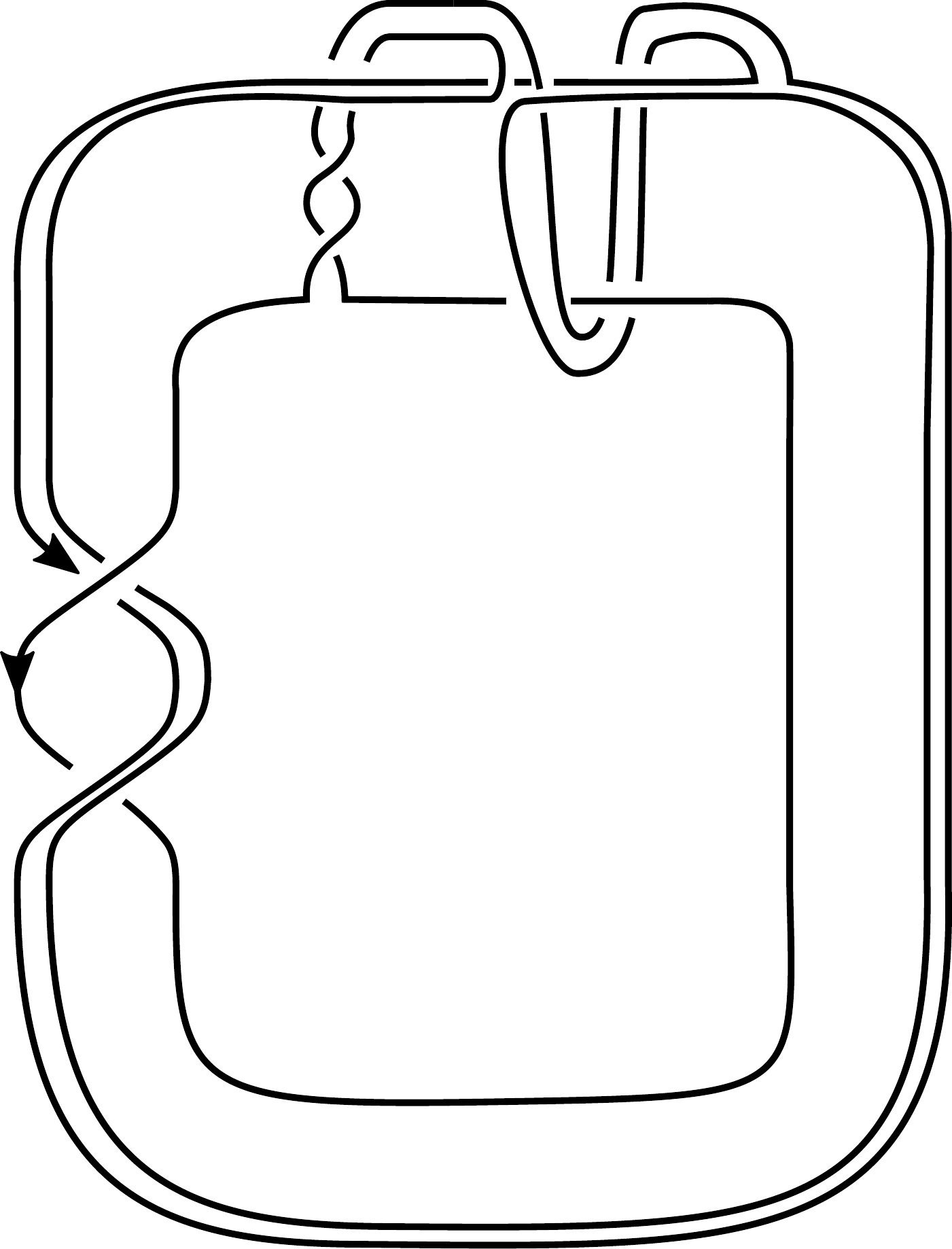}
\caption{$B^{0,0}_1$}
\end{subfigure}
\caption{The knots $A^{0,0}_1$ and $B^{0,0}_1$}
\label{fig:A1B1}
\end{figure}

Now, we may apply the skein relation \eqref{eq:conway_skein} to the crossing of the knot $A_1^{0,0}$ indicated in \ref{fig:A001_ind}. The knot obtained after the crossing change is $A_0^{0,0}$ in our notation from Section \ref{sec:gen}, which is equivalent to the conected sum $8_{19} \# \overline{3}_1$ of two knots in Rolfsen's table \cite{Rol} (see Figure \ref{fig:A_000}). Hence, its Conway polynomial is known. The link obtained after the ``oriented resolution" is $6^2_3$ in Rolfsen's table \cite{Rol} (see Figure \ref{fig:6_3^2}). More precisely, with the orientation we are using, it is the link $L6a1\{1\}$ in \cite{KI}, whence we obtain:
\begin{align*}
\nabla_{A_1^{0,0}}(z)&= \nabla_{A_0^{0,0}}(z) - z\nabla_{6^2_3}(z) \\
&=\nabla_{8_{19} \# \overline{3}_1}(z) - z\nabla_{6^2_3}(z) \\
&=\nabla_{8_{19}}(z) \cdot \nabla_{\overline{3}_1}(z) - z\nabla_{6^2_3}(z) \\
&= 	(1+5z^2+5z^4+z^6)(1+z^2) -z(2z + 2z^3)\\
&=1+4z^2+8z^4+6z^6+z^8
\end{align*}
\begin{figure}
\centering
\includegraphics[width=.6\textwidth]{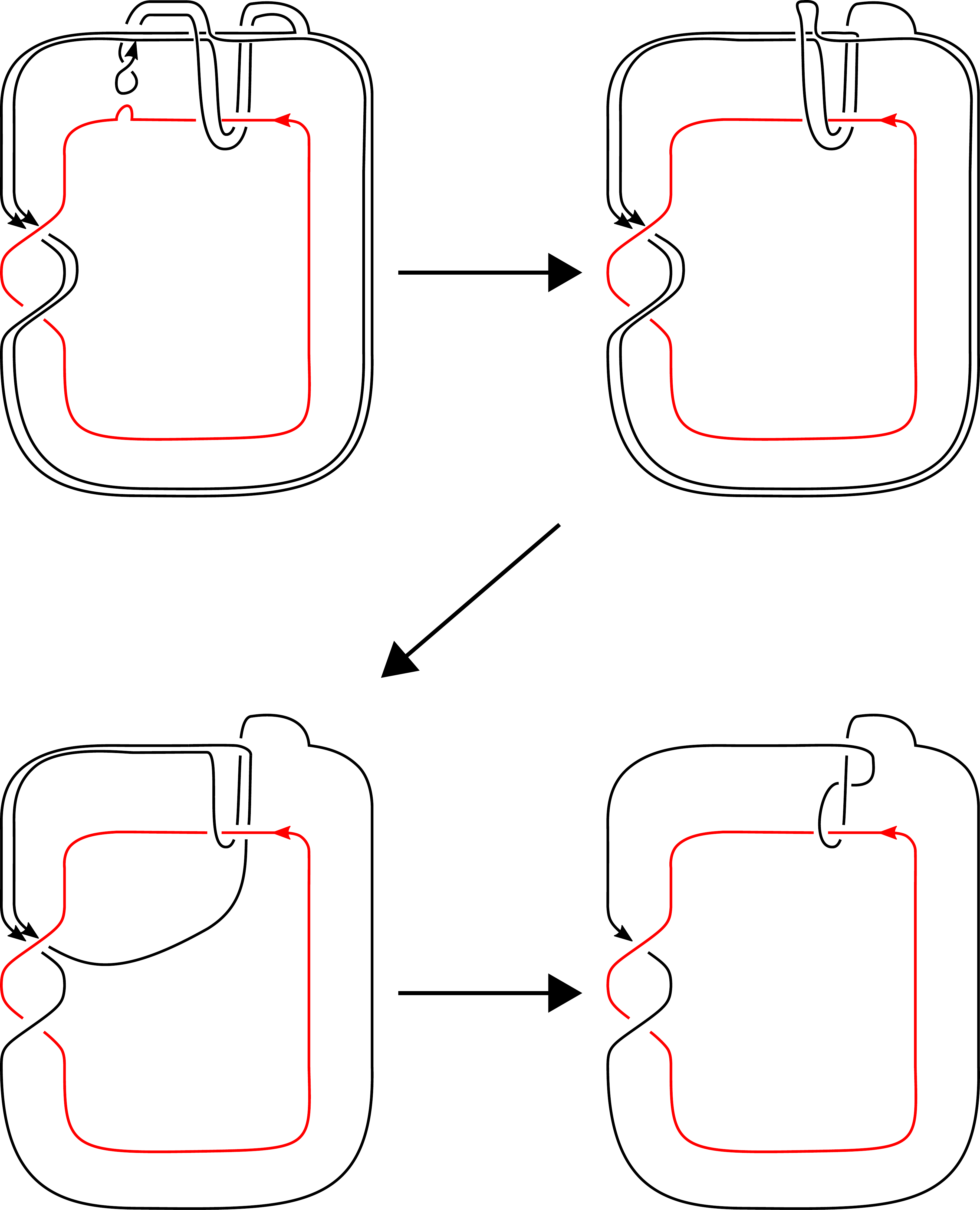}
\caption{Resolving the crossing indicated in Figure \ref{fig:A001_ind} yields a link equivalent to $6_{3}^2.$}
\label{fig:6_3^2}
\end{figure}

\begin{figure}
\centering
\includegraphics[width=.8\textwidth]{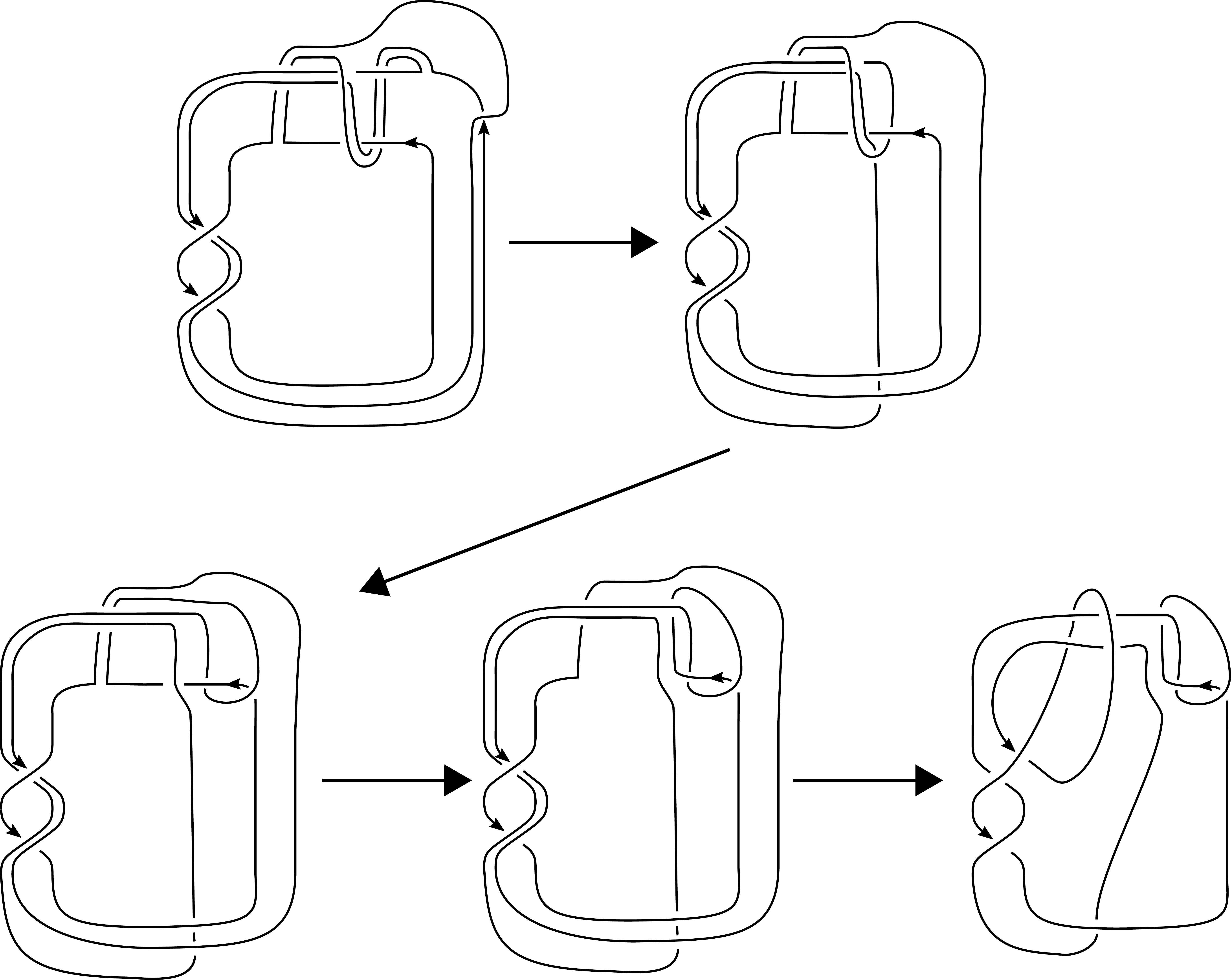}
\caption{The knot $A_0^{0,0}$ is equivalent to $8_{19} \# \overline{3}_1.$}
\label{fig:A_000}
\end{figure}

We observe that $B_1^{0,0}$ is the knot $\overline{10}_{148}$ in the Rolfsen table \cite{Rol} (see Figure \ref{fig:B_001}). Hence, its Conway polynomial is known to be:
\begin{align*}
\nabla_{B_1^{0,0}}(z) &= \nabla_{\overline{10}_{148}}(z) \\
&=1+4z^2+3z^4+z^6
\end{align*}
\begin{figure}
\centering
\includegraphics[width=.8\textwidth]{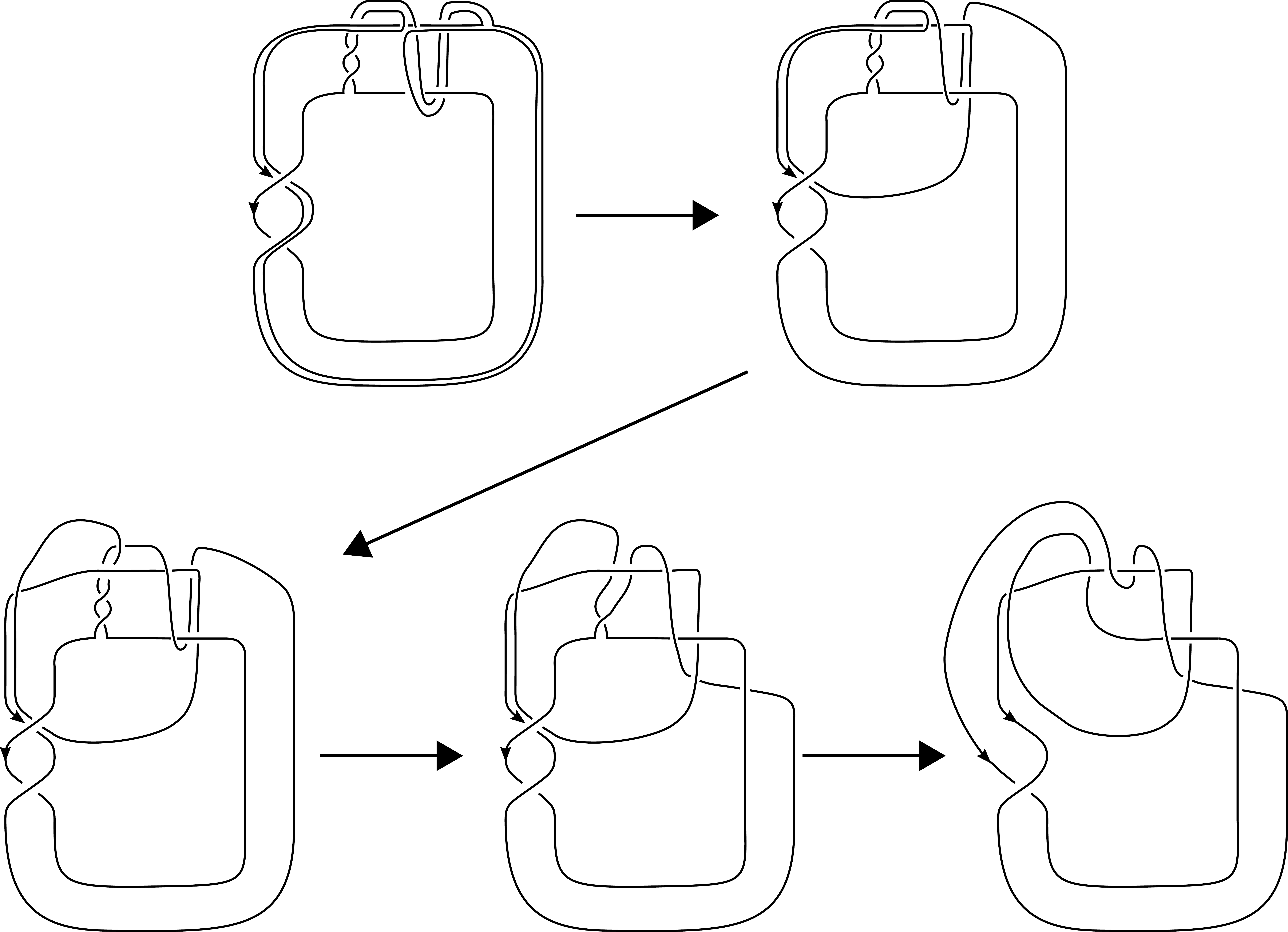}
\caption{The knot $B_1^{0,0}$ is equivalent to $\overline{10}_{148}.$}
\label{fig:B_001}
\end{figure}

By \eqref{eq:conway_K1}, we find:
\begin{align*}
\nabla_{K_1 \cup \beta_{K_1}}(z)-\nabla_{L_1}(z)&=z\left(\nabla_{A_1^{0,0}}(z)-\nabla_{B_1^{0,0}}(z)\right) \\
&=z\left(1+4z^2+8z^4+6z^6+z^8-\left(1+4z^2+3z^4+z^6\right)\right) \\
& \neq 0
\end{align*}
It follows that $\beta_{K_1}$ is not isotopic to a meridian of $K_1,$ and so we have shown:
\begin{lem}\label{lem:K1}
The knot $K_1=T_{2,5}\#T_{-2,3}$ has infinitely many non-characterizing slopes.
\end{lem}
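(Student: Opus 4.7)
The plan is to verify the hypotheses of Baker and Motegi's Theorem \ref{thm:BM} for $K_1$ paired with a carefully chosen unknot companion. Lemmas \ref{lem:banded} and \ref{lem:00} already provide an unknot $\beta_{K_1}$ in the exterior of $K_1$ for which $(0,0)$-surgery on $K_1 \cup \beta_{K_1}$ yields $S^3$, so the first hypothesis is automatic. Everything therefore reduces to showing that $\beta_{K_1}$ is not isotopic to a meridian of $K_1$; once this is established, Theorem \ref{thm:BM} immediately yields infinitely many non-characterizing slopes.

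To rule out that $\beta_{K_1}$ is a meridian, the natural approach is to compute an oriented link invariant of $K_1 \cup \beta_{K_1}$ and compare it to the same invariant of the link $L_1$ consisting of $K_1$ together with a genuine meridian. With the natural orientations both links have linking number $1$, so linking number alone will not suffice; however, one can observe that $L_1$ is obtained from $K_1 \cup \beta_{K_1}$ by two specific crossing changes (visible in the diagram of $K_1 \cup \beta_{K_1}$ coming from the banded annulus presentation), and use the Conway polynomial as a finer invariant. If $\nabla_{K_1 \cup \beta_{K_1}} \neq \nabla_{L_1}$, then the two oriented links cannot be isotopic, and hence $\beta_{K_1}$ cannot be a meridian.

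To compute the difference $\nabla_{K_1 \cup \beta_{K_1}} - \nabla_{L_1}$, I would apply the Conway skein relation \eqref{eq:conway_skein} at each of the two crossing changes, expressing the difference as $z$ times the difference $\nabla_{A} - \nabla_{B}$ of two auxiliary knots obtained from the oriented resolutions. The goal is then to identify $A$ and $B$ with knots whose Conway polynomials are either tabulated or accessible through one further skein manipulation. For one of them, a single additional skein move at a well-chosen crossing should reduce it to a recognizable connect sum (here one expects something like $8_{19} \# \overline{3}_1$) plus a tabulated two-component link, after which multiplicativity \eqref{eq:conway_sum} and table lookup finish the computation. The other auxiliary knot should, with luck, already be identifiable with a tabulated knot (plausibly $\overline{10}_{148}$ given the crossings involved).

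The main obstacle, in my view, is not the polynomial arithmetic but rather the diagrammatic combinatorics of recognizing the intermediate knots and links as entries in standard tables, and of tracking orientations carefully enough that the skein formula is applied with the correct signs. If these identifications go through and the resulting polynomials differ, then $K_1 \cup \beta_{K_1}$ and $L_1$ are genuinely distinct oriented links, so $\beta_{K_1}$ is not isotopic to a meridian of $K_1$, and Theorem \ref{thm:BM} completes the proof of Lemma \ref{lem:K1}.
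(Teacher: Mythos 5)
Your proposal follows essentially the same route as the paper: invoke Lemmas \ref{lem:banded} and \ref{lem:00} to satisfy the surgery hypothesis of Theorem \ref{thm:BM}, then distinguish $K_1\cup\beta_{K_1}$ from $L_1$ by applying the Conway skein relation at the two crossing changes and identifying the resolved knots (one via a further skein move reducing to $8_{19}\#\overline{3}_1$ and the link $6^2_3$, the other as $\overline{10}_{148}$), exactly as in the text. The only difference is that you leave the diagrammatic identifications and the final polynomial arithmetic unexecuted, but the paper carries these out and confirms the polynomials differ.
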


\section{The general case}\label{sec:gen}

We now consider the general case. As in the case of $n=1,$ we wish to show that $\beta_{K_n}$ is not isotopic to a meridian of $K_n$ for $n\geq 2.$ Notice that performing the crossing changes indicated in Figure \ref{fig:link_gen} transforms the link $K_n \cup \beta_{K_n}$ into the link $K_n \cup \mu,$ where $\mu$ is a meridian of $K_n$ whose orientation consistent with that of $\beta_{K_n}$ in the sense that $lk(K_n,\beta_{K_n})=lk(K_n,\mu)=1.$

\begin{figure}
\centering
\includegraphics[width=.5\textwidth]{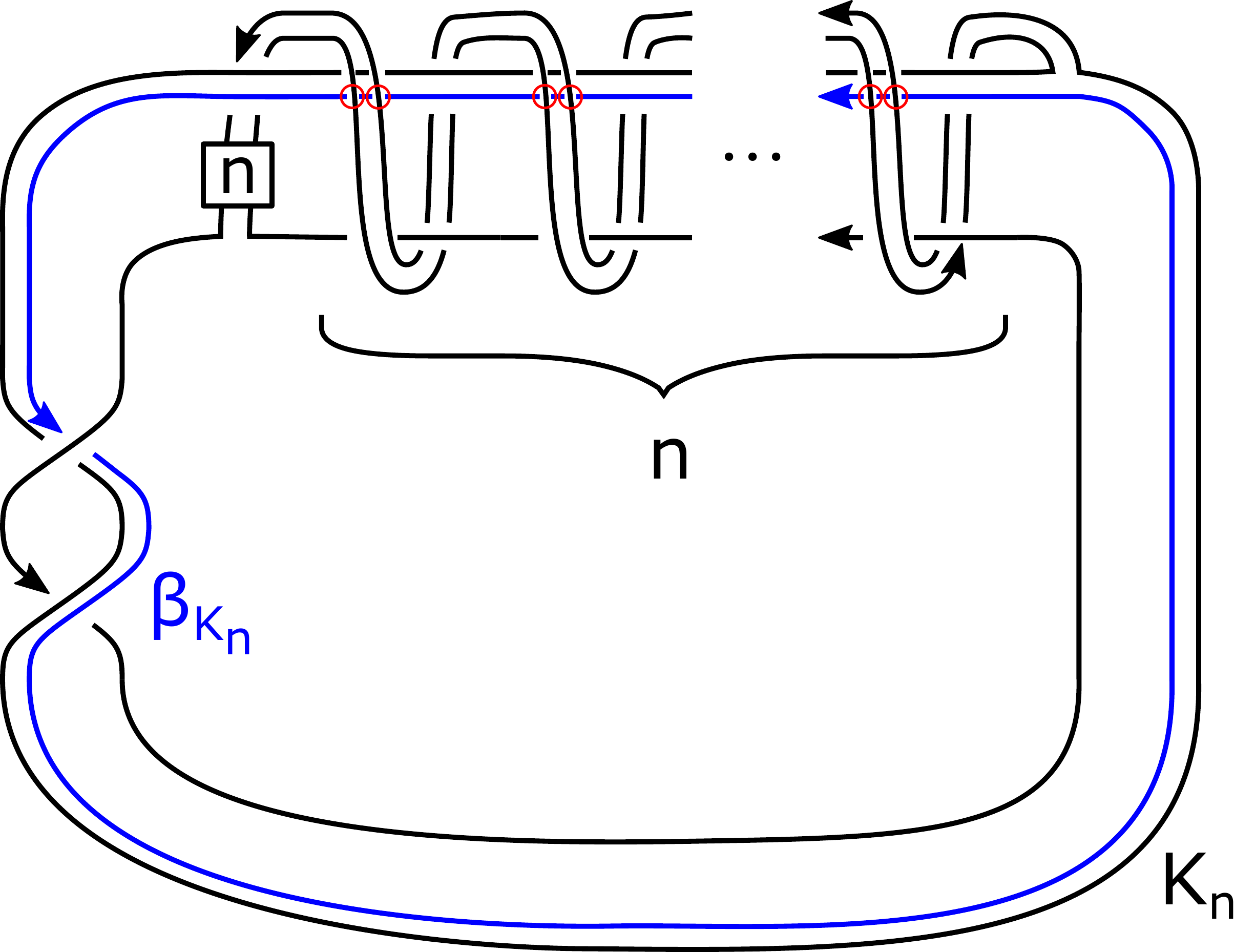}
\caption{Changing the indicated crossings makes the unknot $\beta_{K_n}$ isotopic to a meridian of $K_b.$}
\label{fig:link_gen}
\end{figure}

First, we introduce two families of knots, which we call $A_n^{\ell,r}$ and $B_n^{\ell,r}$ for $n,\ell,r \geq 0.$ These are illustrated in Figures \ref{fig:A_nlr} and \ref{fig:B_nlr}.
\begin{figure}
\centering
\includegraphics[width=.7\textwidth]{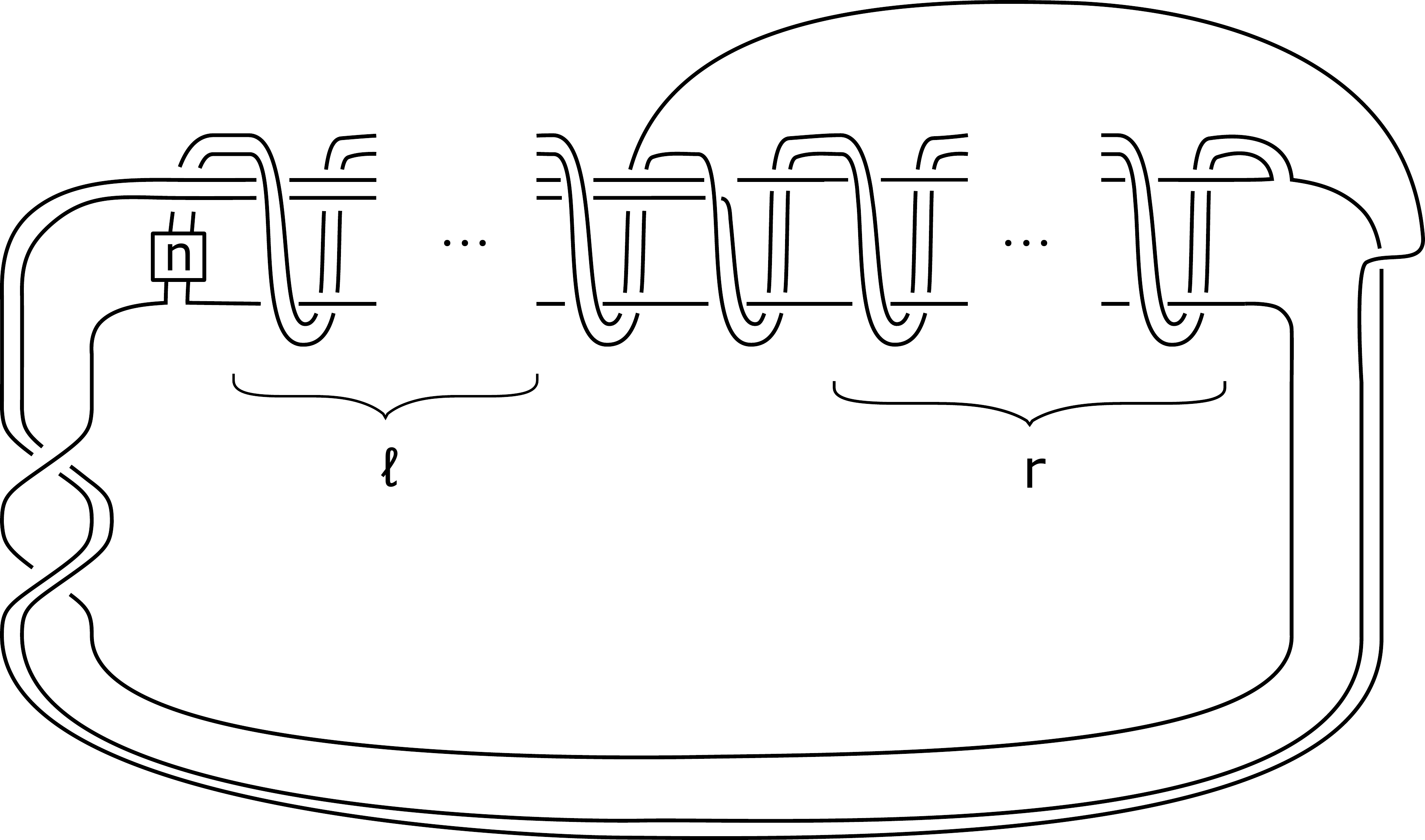}
\caption{The knot $A_{n}^{\ell,r}$}
\label{fig:A_nlr}
\end{figure}

\begin{figure}
\centering
\includegraphics[width=.7\textwidth]{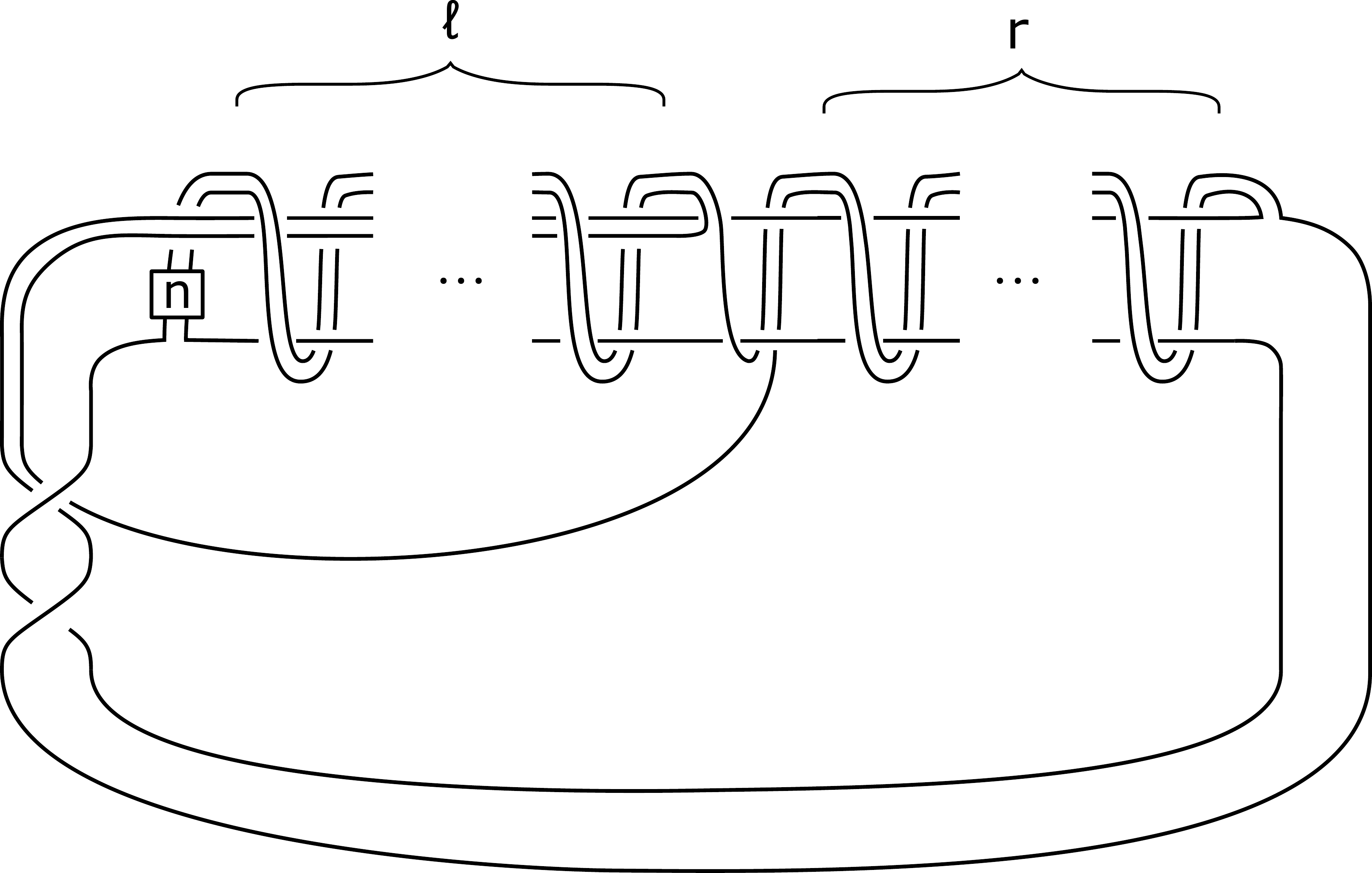}
\caption{The knot $B_{n}^{\ell,r}$}
\label{fig:B_nlr}
\end{figure}

As in the case $n=1,$ we show that $K_n \cup \beta_{K_n}$ is not equivalent to $K_n \cup \mu$ by arguing that the two links have different Conway polynomials. We wish to apply the skein relation \eqref{eq:conway_skein} to change the crossings in Figure \ref{fig:link_gen} from right to left. There are $n$ pairs of adjacent crossings, and so we examine each pair together in a single step. At the $k$th step, the ``oriented resolution" (corresponding to $L_0$ in the notation of Figure \ref{fig:skein}) yields two knots; one for each pair of crossings. Since we are proceeding from right to left, the oriented resolution at the right crossing at step $k$ yields the knot $A_n^{n-k,k-1},$ while the left crossing yields the knot $B_n^{n-k,k-1}.$ Hence, we have:

\begin{lem}
Let $K_n$ and $\beta_{K_n}$ be as in Figure \ref{fig:link_gen}. Let $\mu$ be a meridian of $K_n$ oriented so that $lk(K_n \cup \beta_{K_n})=lk(K_n\cup \mu).$ Then 
\[
\nabla_{K_n\cup\beta_{K_n}}(z) - \nabla_{K_n\cup \mu}(z) = \sum_{k=1}^n \left( \nabla_{A_n^{n-k,k-1}}(z)-\nabla_{B_n^{n-k,k-1}}(z)\right)
\]
\end{lem}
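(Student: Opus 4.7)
The approach is to apply the Conway skein relation \eqref{eq:conway_skein} iteratively at each of the $2n$ crossings indicated in Figure \ref{fig:link_gen}, telescoping through a chain of intermediate two-component links whose endpoints are $K_n \cup \beta_{K_n}$ and $K_n \cup \mu$. Organize the $2n$ crossings into $n$ pairs $P_1, \ldots, P_n$, ordered from right to left; each $P_k$ consists of adjacent right-hand and left-hand crossings of opposite signs. Set $L^{(0)} := K_n \cup \beta_{K_n}$ and inductively define $L^{(i)}$ by changing a single crossing of $L^{(i-1)}$, processing the two crossings of $P_1$, then $P_2$, and so on, so that $L^{(2n)} = K_n \cup \mu$.

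At each step the skein relation gives
\[
\nabla_{L^{(i-1)}}(z) - \nabla_{L^{(i)}}(z) = \epsilon_i \, z \, \nabla_{R_i}(z),
\]
where $R_i$ is the oriented resolution of the crossing changed at step $i$ and $\epsilon_i \in \{\pm 1\}$ records the sign of that crossing change. Summing these $2n$ equations telescopes to a single identity relating $\nabla_{K_n \cup \beta_{K_n}}$ and $\nabla_{K_n \cup \mu}$ to a sum of Conway polynomials of the $R_i$, reproducing the $n=1$ computation \eqref{eq:conway_K1} as a special case.

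The key geometric step, and the main obstacle, is to identify these resolutions explicitly. I would verify by direct inspection of Figure \ref{fig:link_gen} that within the pair $P_k$ the oriented resolution at the right-hand crossing is precisely the knot $A_n^{n-k,k-1}$ of Figure \ref{fig:A_nlr}, and at the left-hand crossing is the knot $B_n^{n-k,k-1}$ of Figure \ref{fig:B_nlr}. In particular, one must check that each resolution fuses the two link components into a genuine knot rather than a two-component link, which follows from tracing the orientations inherited from $K_n$ and $\beta_{K_n}$. The indices $(n-k, k-1)$ then have a natural meaning: after the first $k-1$ pairs have been processed, there remain $n-k$ untouched crossings to the left of $P_k$ and $k-1$ already-changed crossings to its right, and these are exactly the parameters $(\ell, r)$ appearing in the definition of $A_n^{\ell,r}$ and $B_n^{\ell,r}$. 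Because the two crossings in $P_k$ have opposite signs, the corresponding signs $\epsilon_i$ are opposite, contributing the paired difference $\nabla_{A_n^{n-k,k-1}} - \nabla_{B_n^{n-k,k-1}}$ to the telescoping sum. Regrouping pair-by-pair then yields the claimed identity.
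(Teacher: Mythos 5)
Your proposal is correct and follows essentially the same route as the paper: the paper likewise changes the $2n$ crossings of Figure \ref{fig:link_gen} in $n$ pairs proceeding from right to left, identifies the oriented resolutions at the $k$th pair as the knots $A_n^{n-k,k-1}$ and $B_n^{n-k,k-1}$, and telescopes the skein relation, with the opposite signs of the two crossings in each pair accounting for the difference $\nabla_{A_n^{n-k,k-1}}-\nabla_{B_n^{n-k,k-1}}$. One small point: your telescoping (correctly) produces an overall factor of $z$ on the right-hand side, consistent with \eqref{eq:conway_K1} and with Corollary \ref{cor:a3}; the lemma as stated omits this factor, which is evidently a typo in the statement rather than a gap in your argument.
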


\begin{cor}\label{cor:a3}
Let $K_n$ and $\beta_{K_n}$ be as in Figure \ref{fig:link_gen}. Let $\mu$ be a meridian of $K_n,$ oriented such that $lk(K_n \cup \beta_{K_n})=lk(K_n\cup \mu).$ Then $\nabla_{K_n\cup\beta_{K_n}}(z) - \nabla_{K_n\cup \mu}(z) =  a_3 z^3 + a_5z^5+\dots,$ where
\begin{equation}\label{eq:a3}
a_3= \sum_{k=1}^n \left( a_2\left(A_n^{n-k,k-1}\right)-a_2\left(B_n^{n-k,k-1}\right)\right)
\end{equation}
\end{cor}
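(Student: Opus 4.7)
The plan is to extract the coefficient of $z^3$ from the identity in the preceding lemma, after first verifying that the difference of link polynomials indeed starts at cubic order. Since $K_n \cup \beta_{K_n}$ and $K_n \cup \mu$ are both two-component links, their Conway polynomials contain only odd powers of $z$, so the same is true of their difference. Moreover, the coefficient of $z$ in a two-component link's Conway polynomial is its linking number, and by hypothesis these linking numbers coincide. Hence the difference has no term of degree $\le 2$ and automatically expands as $a_3 z^3 + a_5 z^5 + \cdots$, confirming the stated form.

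Next, I would apply the identity from the preceding lemma to identify $a_3$. In each summand $\nabla_{A_n^{n-k,k-1}}(z) - \nabla_{B_n^{n-k,k-1}}(z)$ the two knot Conway polynomials have the form $1 + a_2 z^2 + a_4 z^4 + \cdots$, so the leading $1$'s cancel and the difference equals
\[
\bigl( a_2(A_n^{n-k,k-1}) - a_2(B_n^{n-k,k-1}) \bigr) z^2 + O(z^4).
\]
Accounting for the factor of $z$ coming from each application of the skein relation \eqref{eq:conway_skein}, this contributes $(a_2(A_n^{n-k,k-1}) - a_2(B_n^{n-k,k-1})) z^3$ to the cubic term of the overall expression. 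Summing over $k = 1, \ldots, n$ then yields precisely the formula claimed for $a_3$.

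There is no real difficulty here; the corollary is a direct coefficient extraction from the preceding lemma together with the parity structure of the Conway polynomial on knots versus two-component links. The only care required is bookkeeping the $z$ factor supplied by each skein resolution, which is what converts the $z^2$ leading terms of the knot polynomial differences into the $z^3$ coefficient on the left-hand side.
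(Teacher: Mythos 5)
Your proof is correct and is essentially the argument the paper leaves implicit: the corollary is a direct coefficient extraction from the preceding lemma, using the parity of the Conway polynomial for knots versus two-component links together with the fact that the linear coefficient of a two-component link's Conway polynomial is its linking number, which the hypothesis forces to cancel. Your careful bookkeeping of the factor of $z$ supplied by the skein relation is in fact necessary and worth flagging: the preceding lemma as printed omits that factor on its right-hand side (compare the $n=1$ case in \eqref{eq:conway_K1}), and without restoring it the stated formula for $a_3$ would not follow, so your account is the correct reading.
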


\subsection{Computation of Conway polynomial coefficients}

We now compute the invariant $a_2$ for the families of knots $A_n^{\ell,r}$ and $B_n^{\ell,r}$ defined above. This will allow us to explicitly compute the right-hand-side of equation \eqref{cor:a3}.

We first claim:
\begin{lem}\label{lem:A}
$
a_2\left(A_n^{\ell,r}\right)= 4\ell^2+r^2+2\ell r + 6\ell +5r-2n+6
$
\end{lem}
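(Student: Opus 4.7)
The plan is to prove the formula by induction, exploiting the skein relation for $a_2$. Recall that whenever oriented knots $K_+$ and $K_-$ differ at a single crossing (as in Figure \ref{fig:skein}), equation \eqref{eq:a2_skein} gives
\[
a_2(K_+) - a_2(K_-) = \operatorname{lk}(L_0),
\]
where $L_0$ is the two-component oriented resolution. This reduces the problem to (i)~computing the linking number of the resolution link when I flip crossings in each of the three twist regions controlled by $\ell$, $r$, and $n$, and (ii)~establishing a single base case.

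The first step is to fix $n$ and derive a one-variable recurrence in $\ell$. I would alter a single crossing in the $\ell$-twist box of Figure \ref{fig:A_nlr}, which converts $A_n^{\ell,r}$ into $A_n^{\ell-1,r}$ (or more precisely, a diagram isotopic to it after an obvious Reidemeister II). The oriented resolution produces a two-component link whose linking number is a linear function of $\ell$ and $r$; because each full twist contributes two crossings to flip, summing this linking number as $\ell$ ranges from $0$ up to its target value should produce exactly the $4\ell^{2} + 2\ell r + 6\ell$ portion of the formula. The same procedure applied in the $r$-twist box, this time starting from $\ell=0$ and varying $r$, should account for the $r^{2} + 5r$ terms.

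The second step is to compute $a_2(A_n^{0,0})$ as a function of $n$, with the goal of obtaining $-2n + 6$. The paper already records two data points: $A_0^{0,0} = 8_{19}\#\overline{3}_1$ gives $a_2(A_0^{0,0}) = 5 + 1 = 6$, and from $\nabla_{A_1^{0,0}} = 1+4z^2+8z^4+6z^6+z^8$ one reads off $a_2(A_1^{0,0}) = 4$. To interpolate these, I would apply \eqref{eq:a2_skein} to a crossing in the $n$-twist region, verifying that each unit increase in $n$ produces a resolution link of linking number $-2$. With this, induction on $n$ from the base case $a_2(A_0^{0,0}) = 6$ yields $a_2(A_n^{0,0}) = -2n + 6$, and combining with the previous step gives the stated formula.

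The main obstacle is bookkeeping: the three twist boxes sit on strands that interact in the diagram, so each resolution is a genuinely linked two-component sublink whose linking number depends on all three parameters. I expect the sign conventions of Figure \ref{fig:A_nlr} (which twist boxes correspond to $L_+$ versus $L_-$, and in which orientation the resolved strands close up) to be the part most prone to error, and I would double-check the computation against the two known values $a_2(A_0^{0,0}) = 6$ and $a_2(A_1^{0,0}) = 4$ to catch any sign mistake before presenting the general inductive step.
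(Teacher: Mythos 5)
Your proposal follows essentially the same route as the paper: induction on $\ell$, $r$, and $n$ in turn via the skein relation \eqref{eq:a2_skein}, with base case $a_2\left(A_0^{0,0}\right)=a_2\left(8_{19}\#\overline{3}_1\right)=6,$ reducing everything to linking-number computations for the oriented resolutions. The only discrepancy is in the bookkeeping you yourself flagged as error-prone: in the paper's diagrams each step of the $\ell$-induction requires six crossing changes (and each step of the $r$-induction four), not two per full twist, though the resulting increments ($8\ell+2$ and $2\ell+2r+4$ respectively) do sum to the stated quadratic.
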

\begin{proof}
We proceed by induction on $\ell,$ $r,$ and $n$ in turn.\\
\textbf{Induction on $\ell$:} We show that $a_2\left(A_0^{\ell,0}\right)= 4\ell^2 + 6\ell +6.$ \\
The base case is $A_0^{0,0},$ which turns out to be the knot $8_{19} \# \overline{3}_1$ (see Figure \ref{fig:A_000}). So its Alexander (and hence Conway) polynomial is known from Rolfsen's table \cite{Rol}, and we compute:
\begin{align*}
a_2(A_0^{0,0}) &= a_2\left(8_{19} \# \overline{3}_1\right)\\
&= a_2(8_{19}) +a_2 \left( \overline{3}_1\right)\\
&=5+1=6.
\end{align*}

Now assume $a_2\left(A_0^{\ell-1,0}\right)=4(\ell-1)^2+6(\ell-1)+6$ for some $\ell \geq 1.$ From Figure \ref{fig:A_0l0}, we see that $A_{0}^{\ell-1,0}$ can be derived from $A_{0}^{\ell,0}$ by performing six crossing changes. Moreover, Figure \ref{fig:cc_A0l0} shows the linking numbers of the links resulting from the oriented resolutions obtained in the after performing the six crossing changes in the sequence indicated. Repeated application of equation \eqref{eq:a2_skein} gives that:
\begin{align*}
a_2\left(A_{0}^{\ell,0}\right) &= a_2\left(A_{0}^{\ell-1,0}\right) - (-2) + 4 -(5-3\ell) + (3\ell-1) - (1-\ell) + (\ell+3)\\
&= a_2\left(A_{0}^{\ell-1,0}\right) + 8\ell + 2\\
&= 4(\ell-1)^2+6(\ell-1)+6 + 8\ell + 2\\
&= 4\ell^2 + 6\ell + 6,
\end{align*}
as desired.
\begin{figure}
\centering
\includegraphics[width=.7\textwidth]{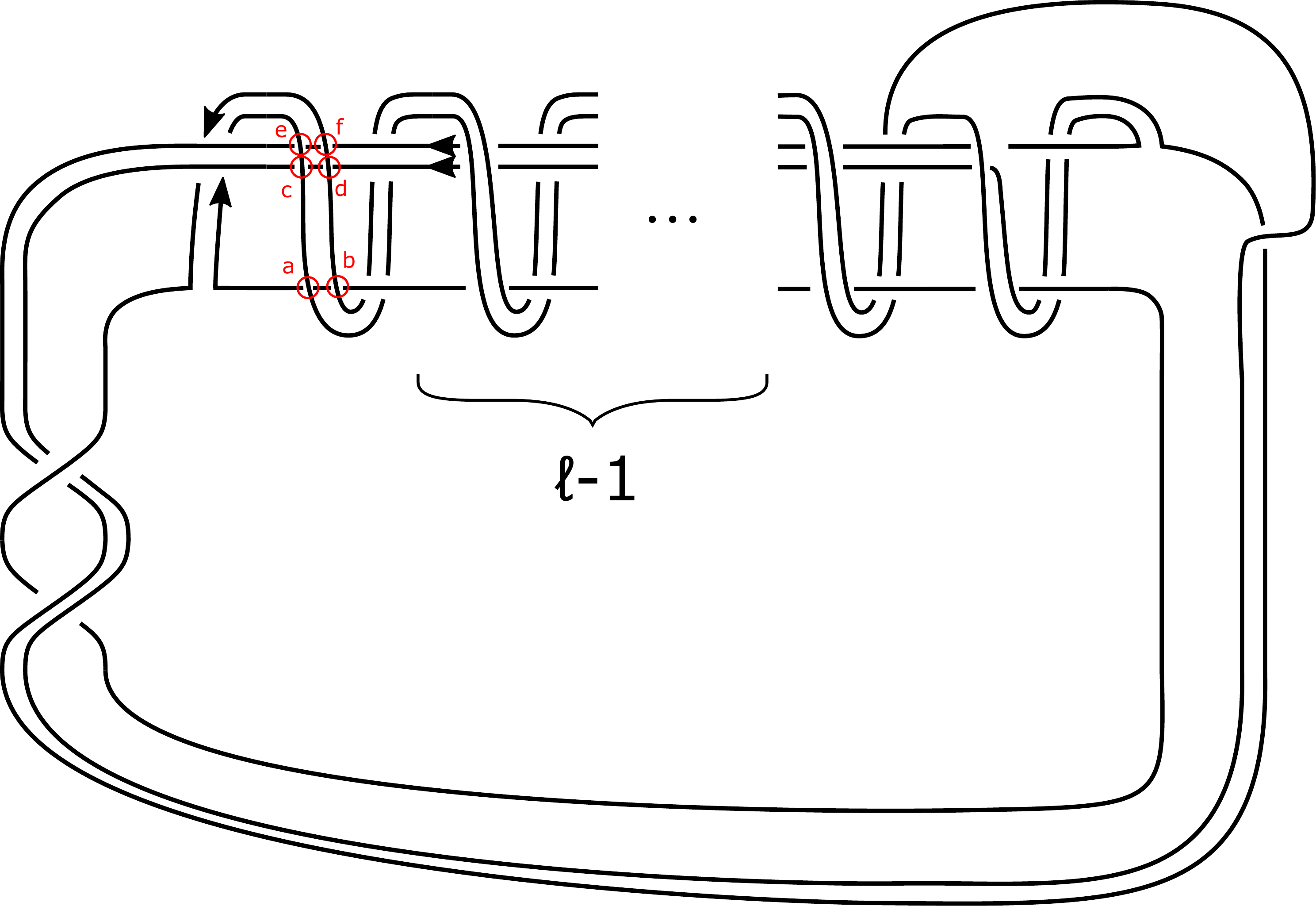}
\caption{The knot $A_{0}^{\ell,0}$. Changing the crossings indicated yields $A_{0}^{\ell-1,0}$.}
\label{fig:A_0l0}
\end{figure}

\begin{figure}
\centering
\begin{subfigure}{.45\textwidth}
\centering
\includegraphics[scale=.15]{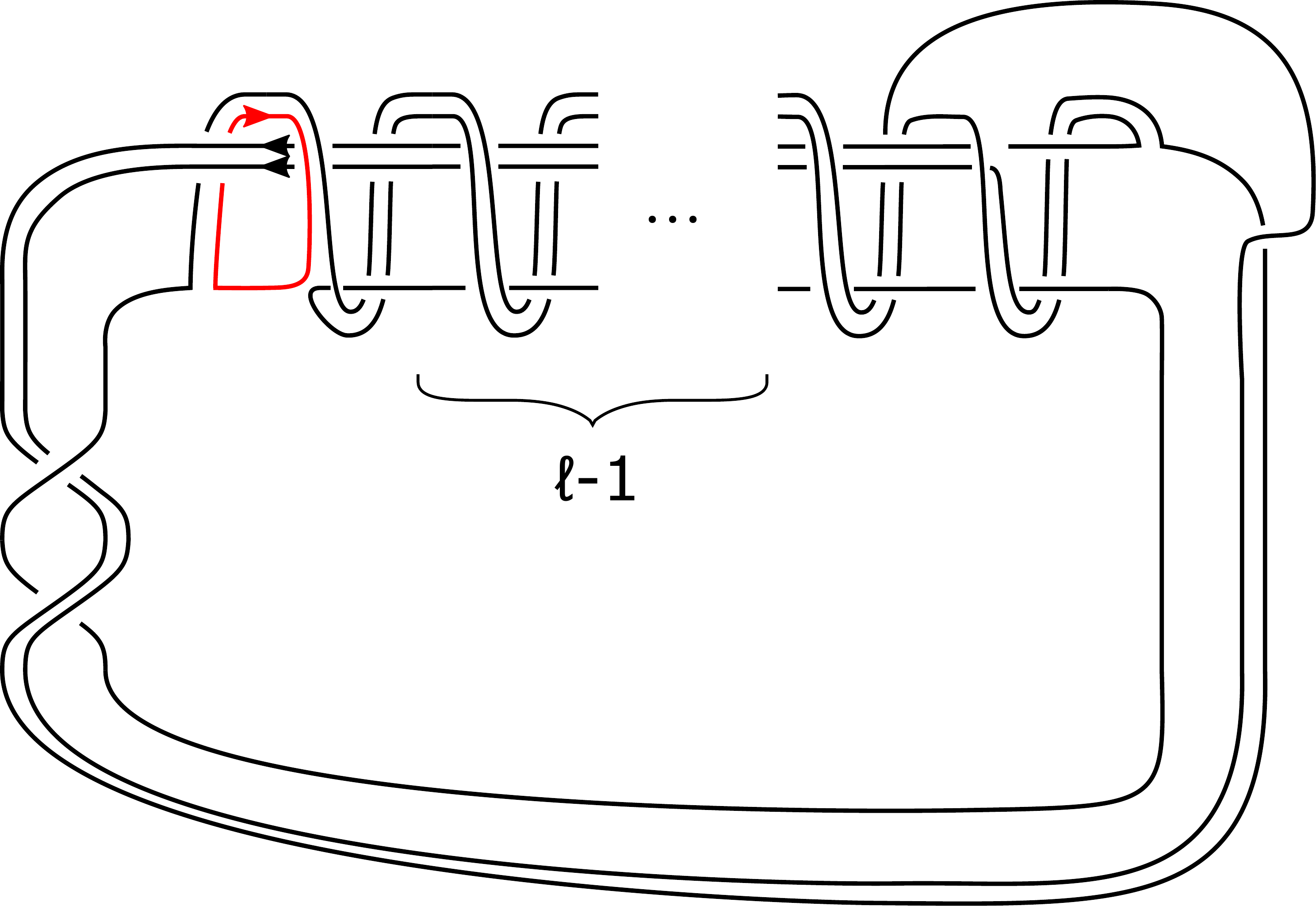}
\caption{linking number $=-2$}
\end{subfigure}
\begin{subfigure}{.45\textwidth}
\centering
\includegraphics[scale=.15]{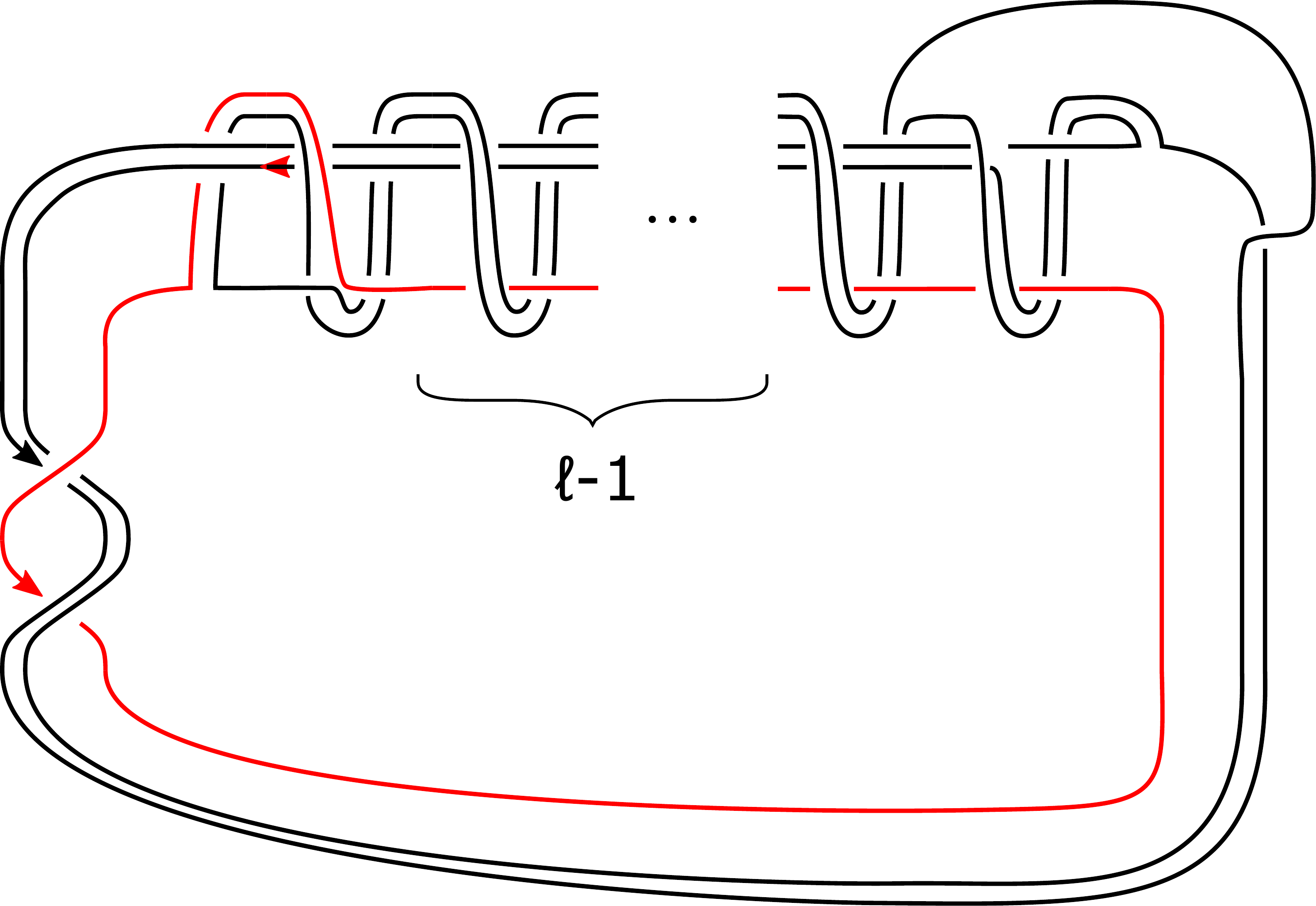}
\caption{linking number $=4$}
\end{subfigure}
\\
\begin{subfigure}{.45\textwidth}
\centering
\includegraphics[scale=.15]{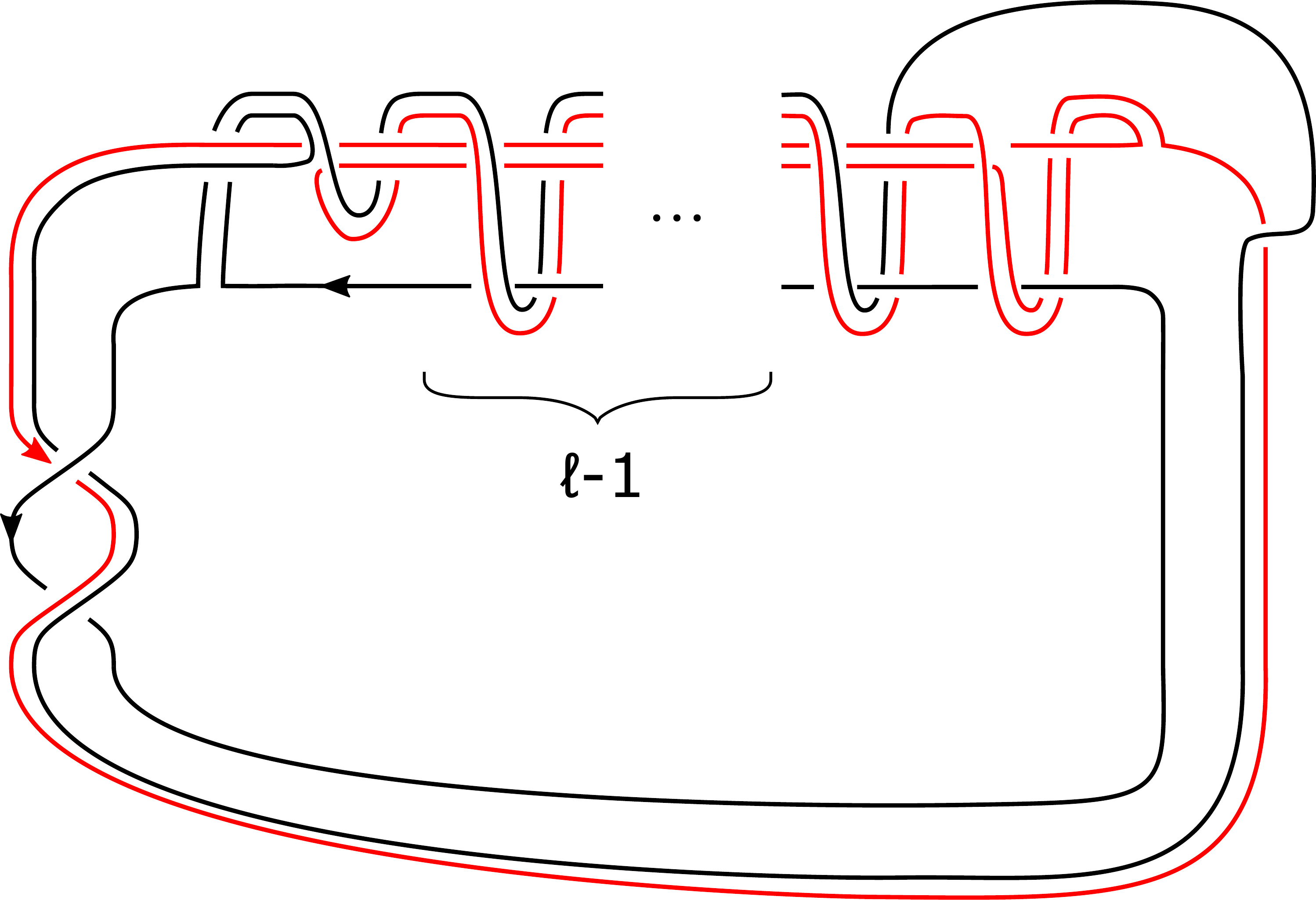}
\caption{linking number $=5-3\ell$}
\end{subfigure}
\begin{subfigure}{.45\textwidth}
\centering
\includegraphics[scale=.15]{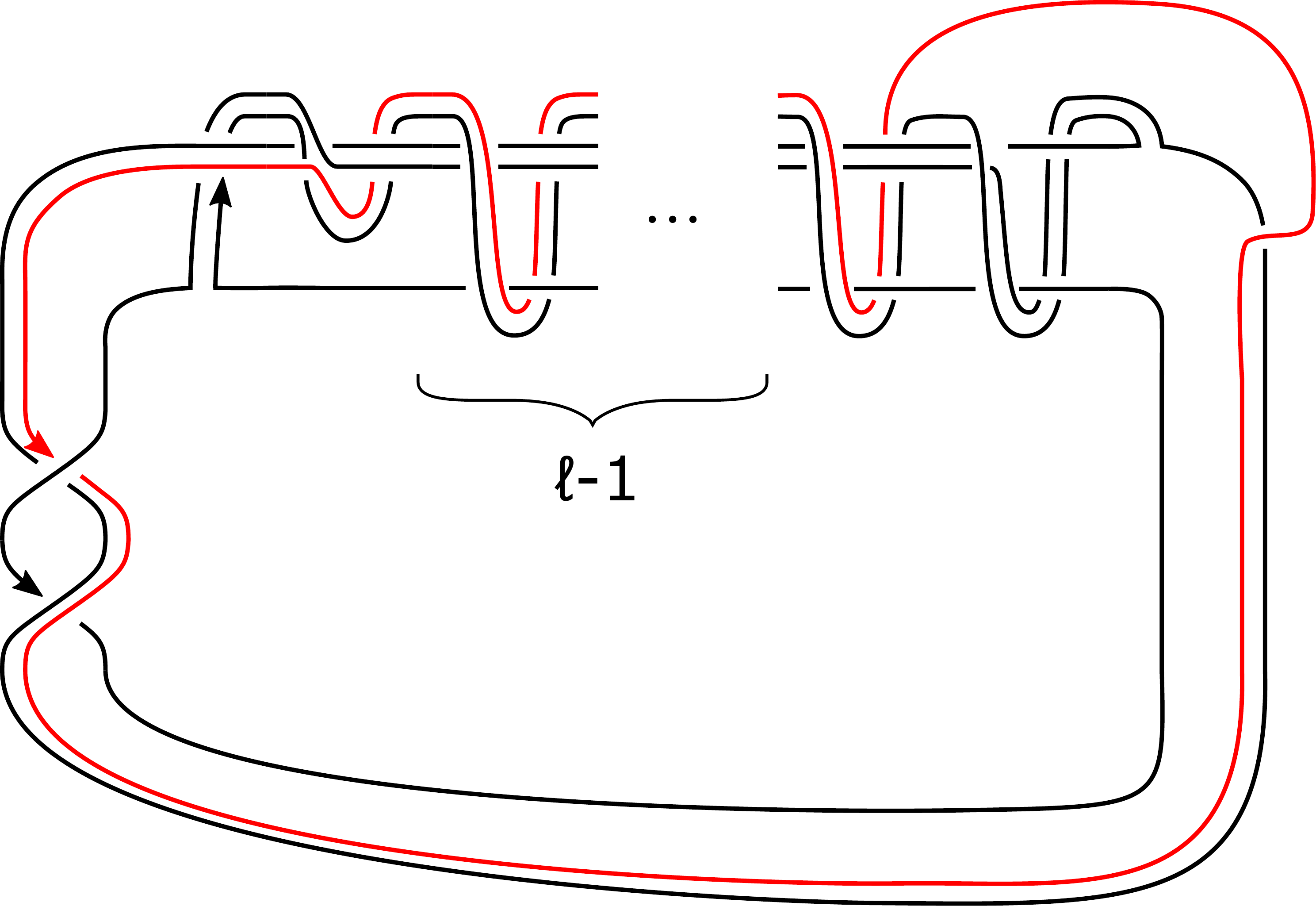}
\caption{linking number $=3\ell-1$}
\end{subfigure}
\\
\begin{subfigure}{.45\textwidth}
\centering
\includegraphics[scale=.15]{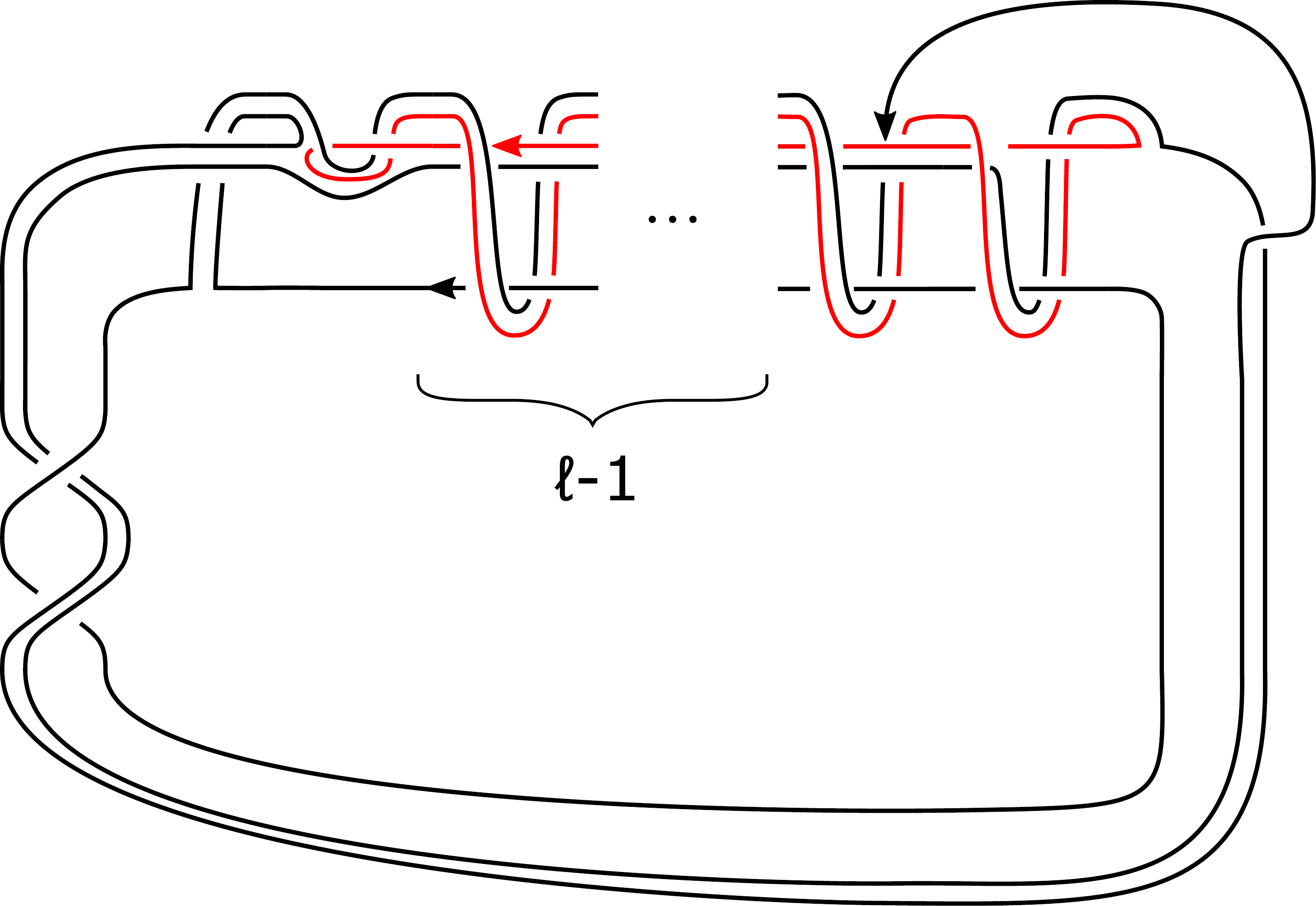}
\caption{linking number $=1-\ell$}
\end{subfigure}
\begin{subfigure}{.45\textwidth}
\centering
\includegraphics[scale=.15]{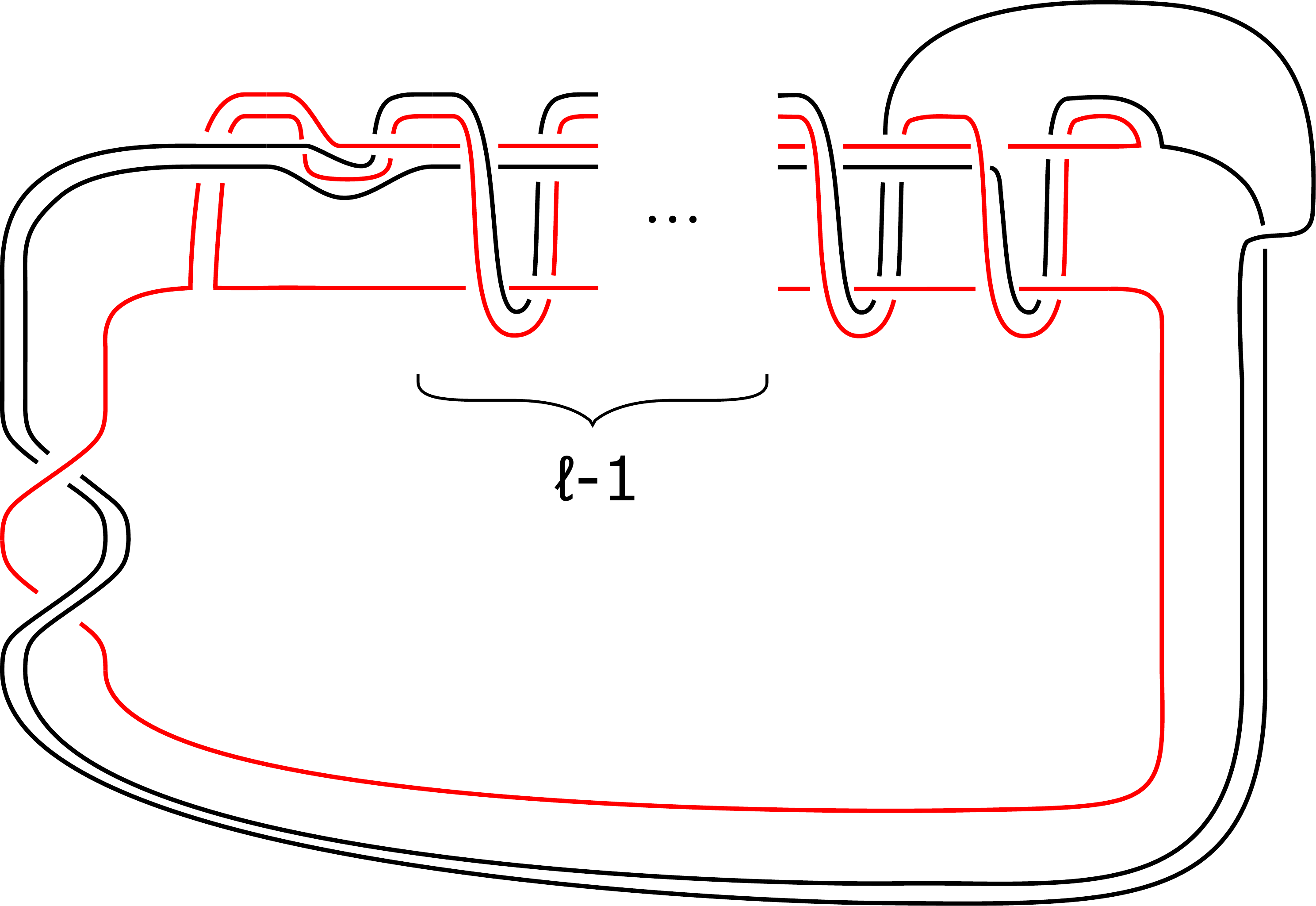}
\caption{linking number $=\ell + 3$}
\end{subfigure}
\caption{Linking numbers of the various links obtained by resolving/changing the crossings in Figure \ref{fig:A_0l0} in sequence.}
\label{fig:cc_A0l0}
\end{figure}
\textbf{Induction on $r$:} We show that $a_2\left(A_0^{\ell,r}\right)= 4\ell^2 +r^2+2\ell r + 6\ell +5r+6.$ \\
The base case ($r=0$) is precisely the conclusion of the previous induction on $\ell.$ So we now assume $a_2\left(A_0^{\ell,r-1}\right)= 4\ell^2 +(r-1)^2+2\ell (r-1) + 6\ell +5(r-1)+6$ for some $r\geq 1.$ We see from Figure \ref{fig:A_0lr} that $A_{0}^{\ell,r-1}$ can be derived from $A_{0}^{\ell,r}$ by performing four crossing changes, and Figure \ref{fig:cc_A0lr} records the linking numbers for the links resulting from the oriented resolutions obtained in the indicated sequence of crossing changes (crossing change d is simply a Reidemeister I move).  Once again, repeatedly applying equation \eqref{eq:a2_skein} gives that:
\begin{align*}
a_2\left(A_{0}^{\ell,r}\right) &= a_2\left(A_{0}^{\ell,r-1}\right) - (-\ell-1) + (\ell+2r+2) +1\\
&= a_2\left(A_{0}^{\ell,r-1}\right) +2\ell+ 2r+4\\
&= 4\ell^2 +(r-1)^2+2\ell (r-1) + 6\ell +5(r-1)+6 +2\ell+ 2r+4\\
&=4\ell^2 + r^2 + 2\ell r + 6\ell +5r +6,
\end{align*}
as desired.
\begin{figure}
\centering
\includegraphics[width=.7\textwidth]{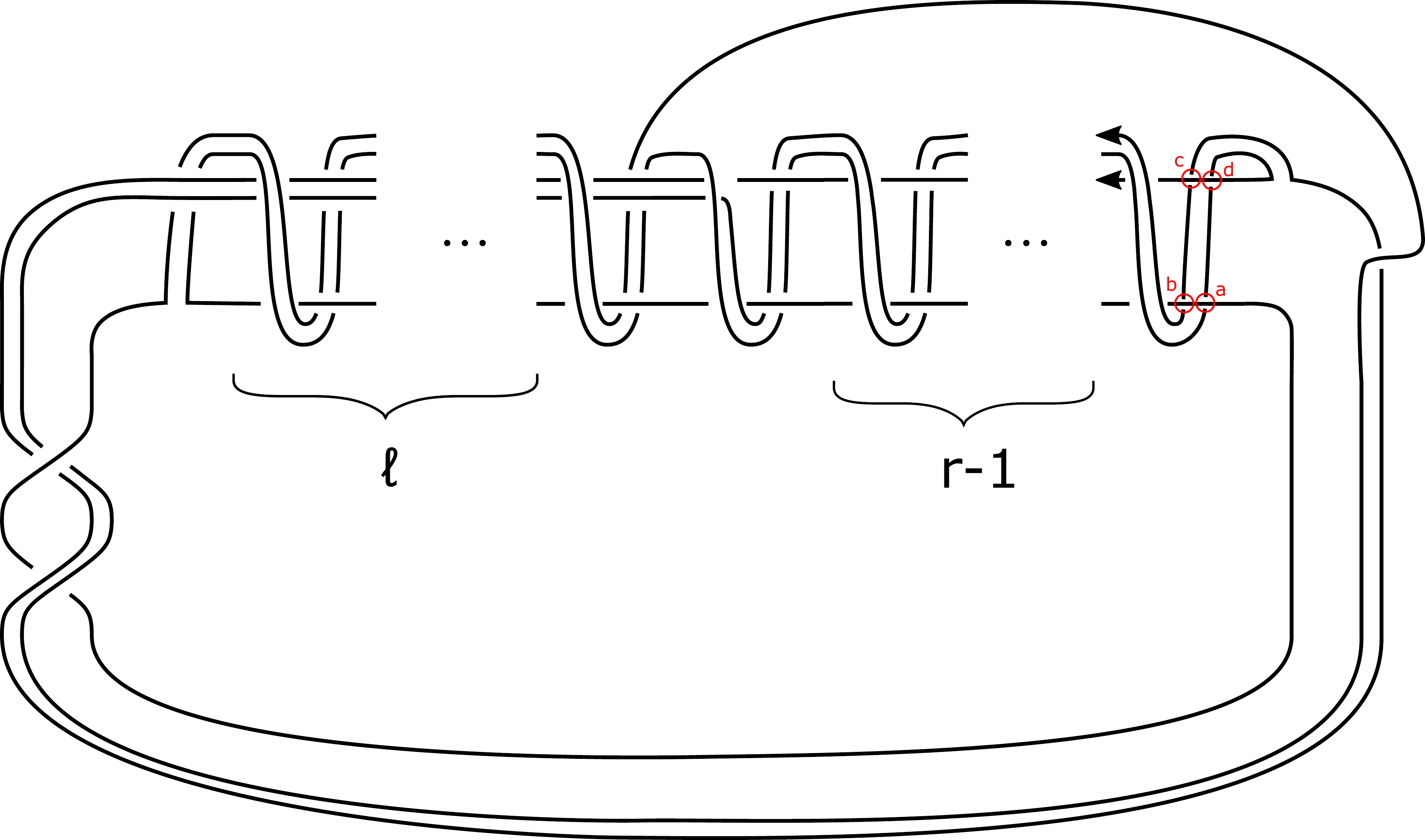}
\caption{The knot $A_{0}^{\ell,r}$. Changing the crossings indicated yields $A_{0}^{\ell,r-1}$.}
\label{fig:A_0lr}
\end{figure}

\begin{figure}
\centering
\begin{subfigure}{.45\textwidth}
\centering
\includegraphics[scale=.15]{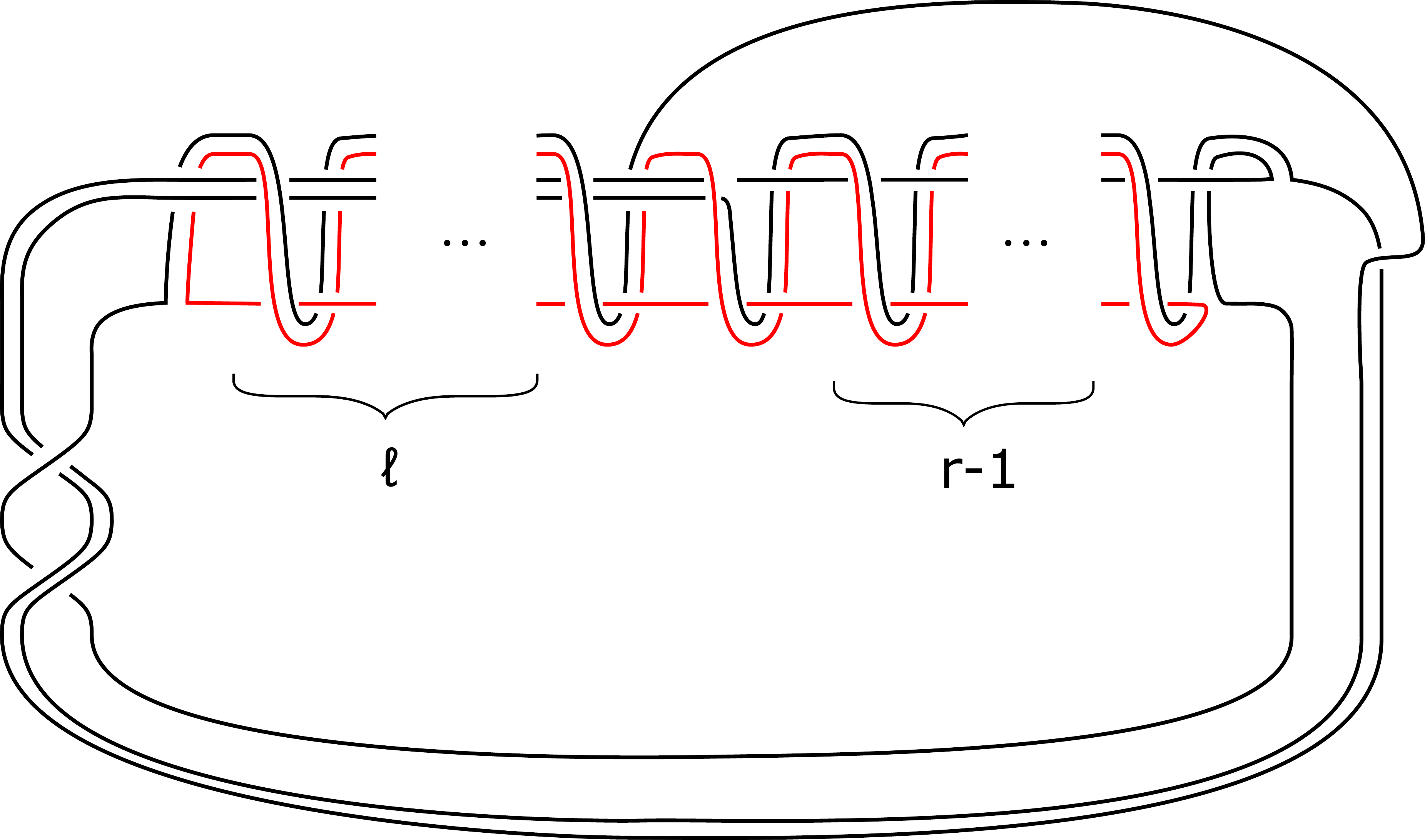}
\caption{linking number $=-\ell-1$}
\end{subfigure}
\begin{subfigure}{.45\textwidth}
\centering
\includegraphics[scale=.15]{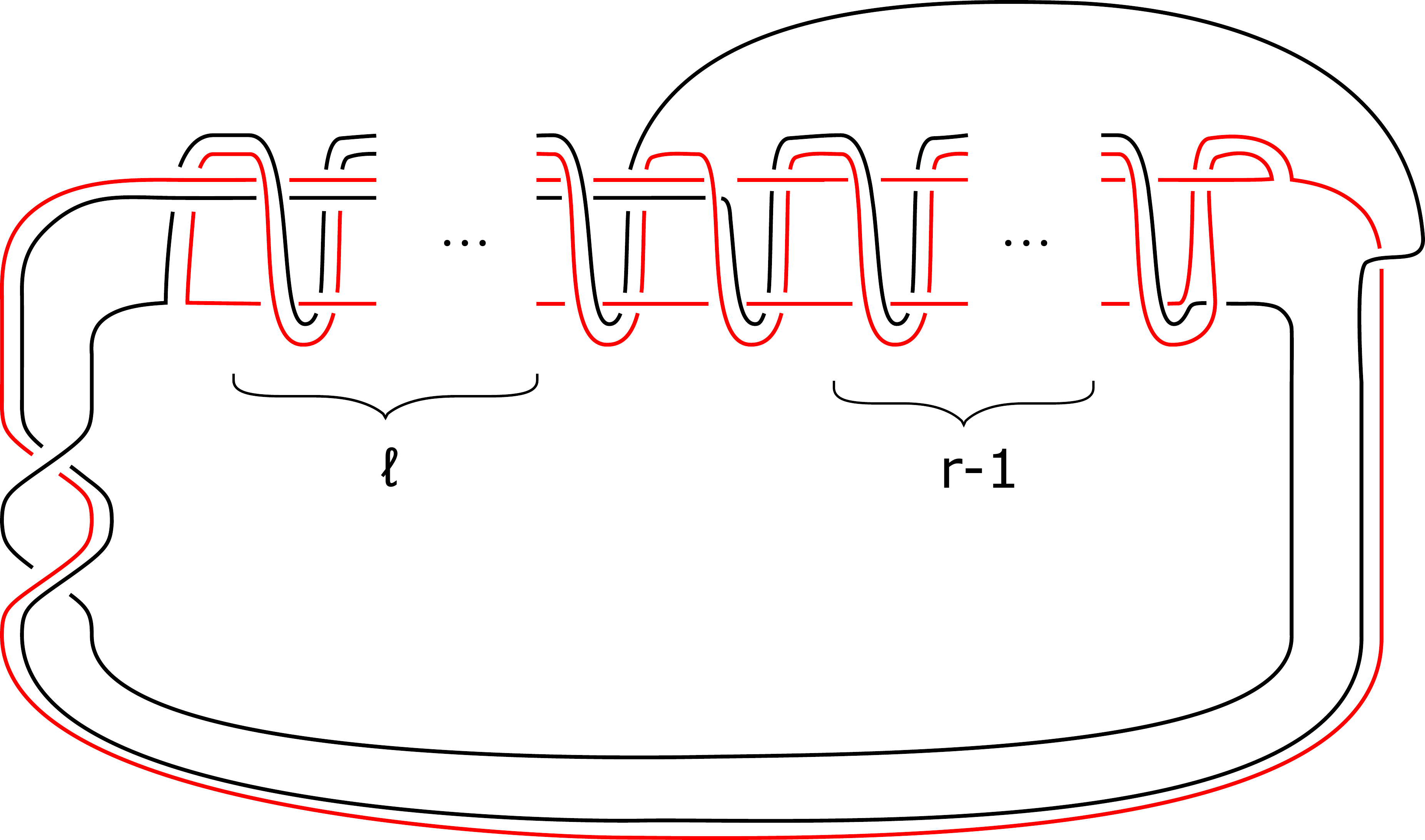}
\caption{linking number $=\ell+2r+2$}
\end{subfigure}
\\
\begin{subfigure}{.45\textwidth}
\centering
\includegraphics[scale=.15]{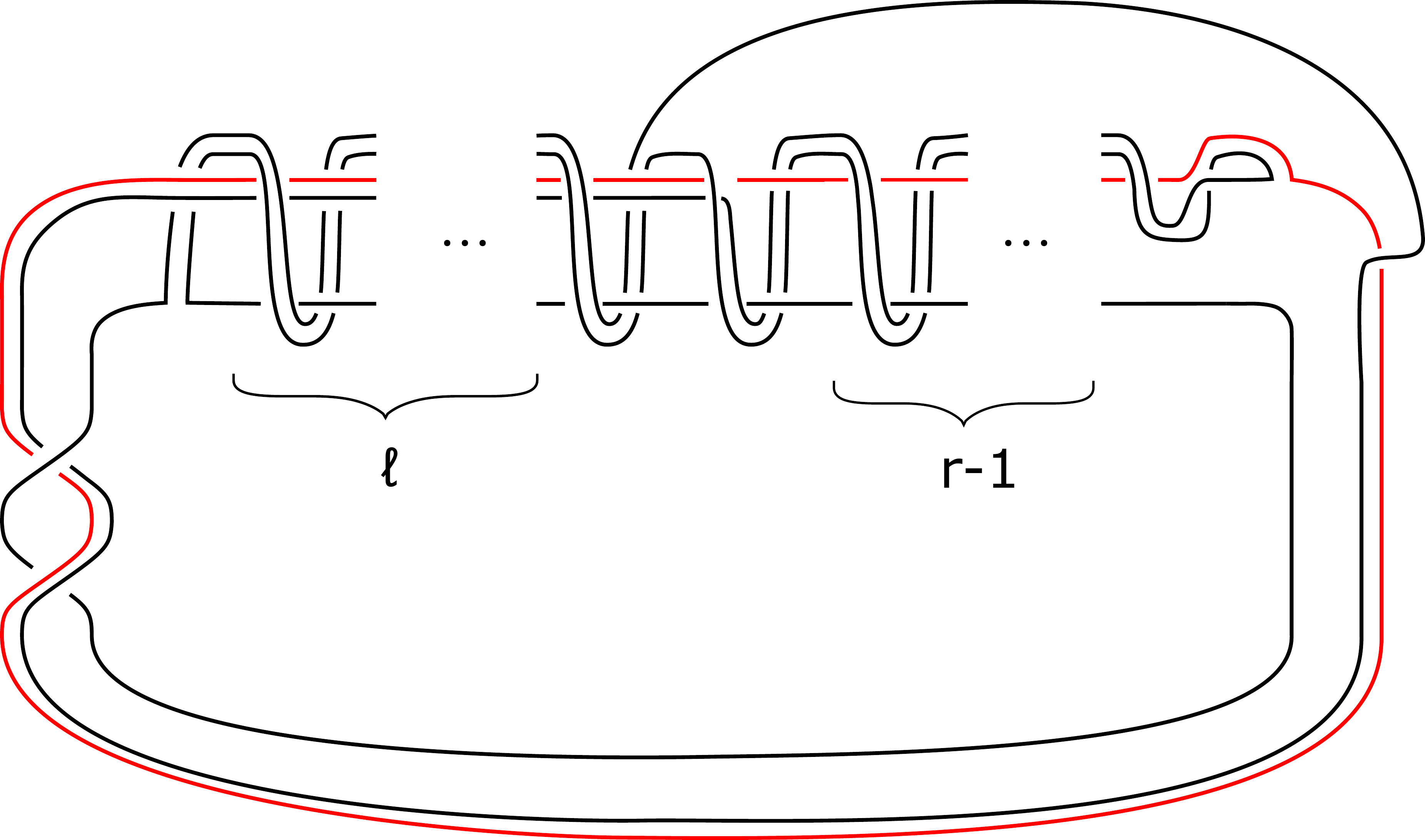}
\caption{linking number $=1$}
\end{subfigure}
\caption{Linking numbers of the various links obtained by resolving/changing the crossings in Figure \ref{fig:A_0lr} in sequence.}
\label{fig:cc_A0lr}
\end{figure}
\textbf{Induction on $n$:} We show that $a_2\left(A_0^{\ell,r}\right)= 4\ell^2 +r^2+2\ell r + 6\ell +5r-2n+6.$ \\
The base case ($n=0$) is precisely the conclusion of the previous induction on $r.$ So we now assume $a_2\left(A_{n-1}^{\ell,r}\right)= 4\ell^2 +(r-1)^2+2\ell r + 6\ell +5r-2(n-1)+6$ for some $n\geq 1.$ We see from Figure \ref{fig:A_nlr_ind} that $A_{n-1}^{\ell,r}$ can be derived from $A_{n}^{\ell,r}$ by performing a crossing change, and Figure \ref{fig:cc_Anlr} records the linking number for the link resulting from the oriented resolution.  Once again, applying equation \eqref{eq:a2_skein} gives that:
\begin{align*}
a_2\left(A_{n}^{\ell,r}\right) &= a_2\left(A_{n-1}^{\ell,r}\right) - 2\\
&=4\ell^2 + r^2 + 2\ell r + 6\ell +5r - 2(n-1) +6 -2\\
&=4\ell^2 + r^2 + 2\ell r + 6\ell +5r - 2n +6,
\end{align*}
as desired.
\begin{figure}
\centering
\begin{subfigure}{.45\textwidth}
\centering
\includegraphics[scale=.15]{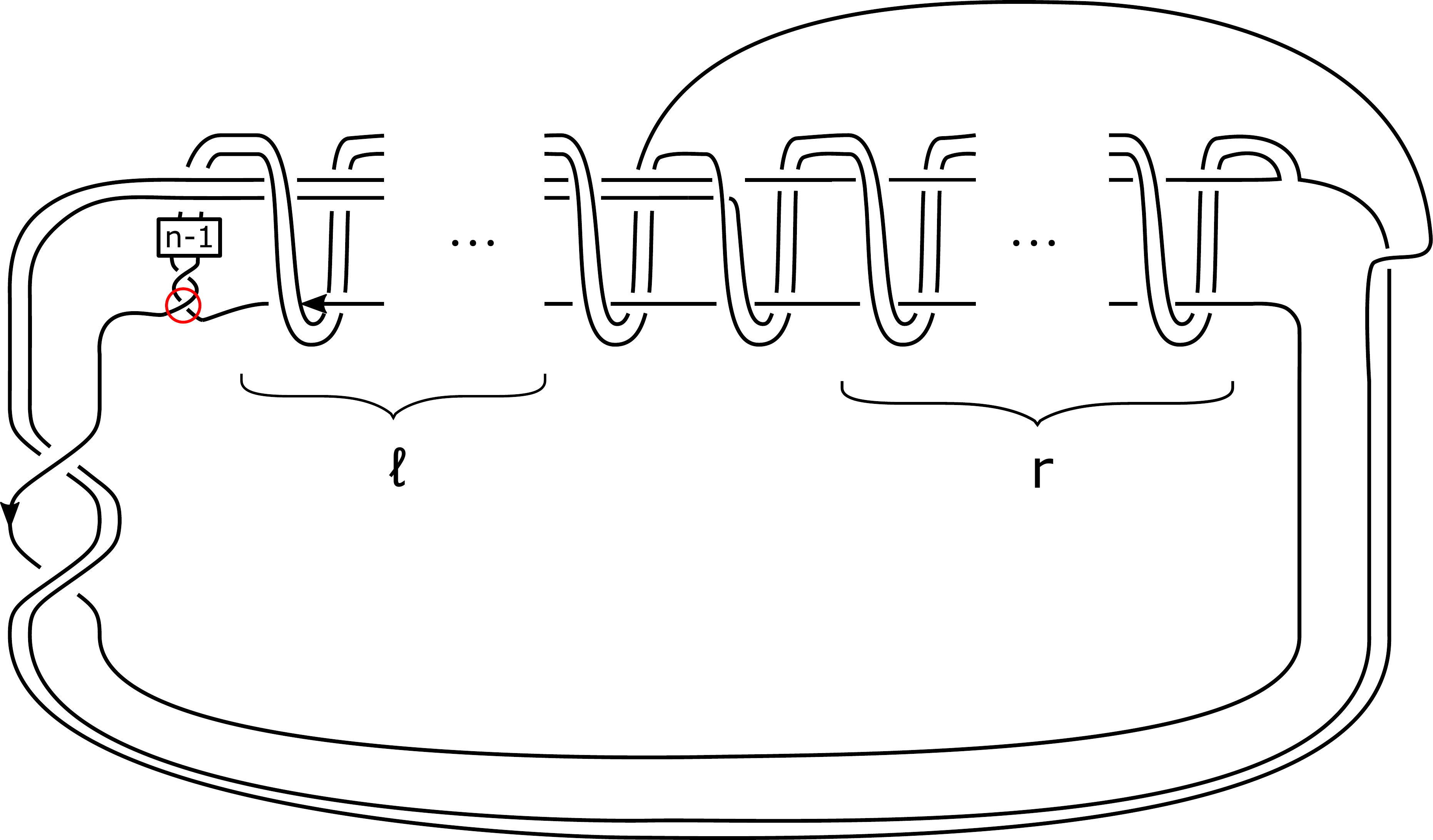}
\caption{The knot $A_{n}^{\ell,r}$. Changing the crossing indicated yields $A_{n-1}^{\ell,r-1}$.}
\label{fig:A_nlr_ind}
\end{subfigure}
\begin{subfigure}{.45\textwidth}
\centering
\includegraphics[scale=.15]{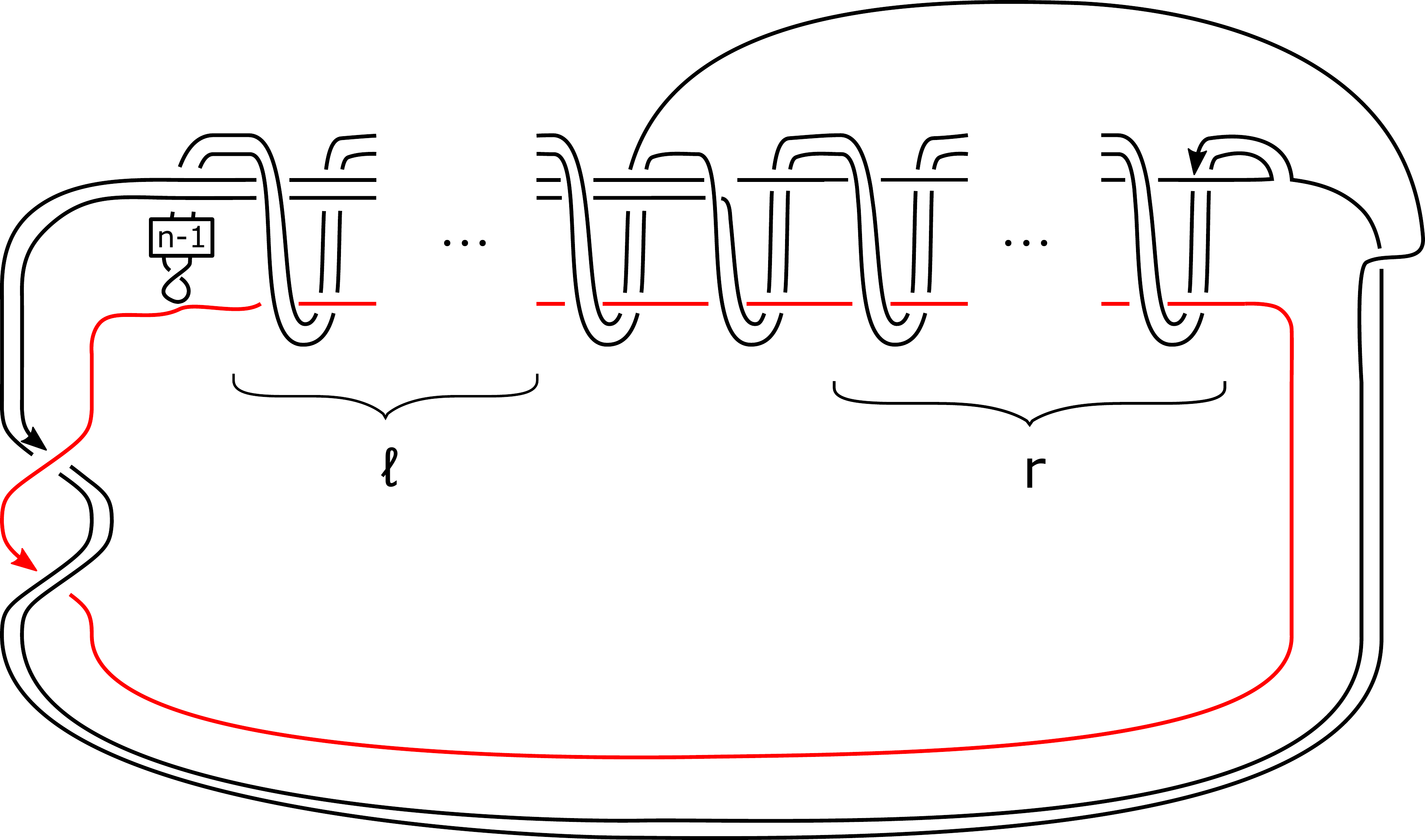}
\caption{linking number $=2$}
\label{fig:cc_Anlr}
\end{subfigure}
\caption{Linking number obtained by resolving the indicated crossing.}
\end{figure}
\end{proof}

Next, we show:

\begin{lem}\label{lem:B}
$
a_2\left(B_n^{\ell,r}\right)= 2\ell^2+r^2+2\ell r + 10\ell +5r-2n+6
$
\end{lem}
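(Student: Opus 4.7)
The plan is to mirror exactly the three-layer induction strategy used in the proof of Lemma \ref{lem:A}: first fix $r = n = 0$ and induct on $\ell$ to get $a_2(B_0^{\ell,0}) = 2\ell^2 + 10\ell + 6$; then fix $n=0$ and induct on $r$ to get the full formula $a_2(B_0^{\ell,r}) = 2\ell^2 + r^2 + 2\ell r + 10\ell + 5r + 6$; then induct on $n$ to pick up the $-2n$ correction. Comparing the claimed formulas, the $r$- and $n$-dependences in Lemmas \ref{lem:A} and \ref{lem:B} match identically, which strongly suggests that the diagrams for $B_n^{\ell,r}$ differ from those for $A_n^{\ell,r}$ only in the part of the tangle controlled by $\ell$. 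I would therefore expect the $r$-induction and $n$-induction to reuse verbatim the same crossing-change sequences and linking-number computations as in Lemma \ref{lem:A} (the contributions $2\ell + 2r + 4$ per step for $r$ and $-2$ per step for $n$), so those steps reduce to reading the linking numbers off the analogues of Figures \ref{fig:cc_A0lr} and \ref{fig:cc_Anlr}.

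The first substantive task is the base case $B_0^{0,0}$: I would identify this diagram as a standard knot or small connect sum (analogous to the identification $A_0^{0,0} = 8_{19}\#\overline{3}_1$ in Figure \ref{fig:A_000}) and read $a_2(B_0^{0,0}) = 6$ from the Rolfsen table. As a consistency check, the formula with $n=1$, $\ell=r=0$ predicts $a_2(B_1^{0,0}) = 4$, which agrees with $a_2(\overline{10}_{148}) = 4$ already used in Section 3.

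The main obstacle is the $\ell$-induction. Here the $B$-formula has quadratic coefficient $2$ rather than $4$ and linear coefficient $10$ rather than $6$, so the per-step increment is $a_2(B_0^{\ell,0}) - a_2(B_0^{\ell-1,0}) = 4\ell + 8$ (compared to $8\ell + 2$ for $A$), which must match the signed sum of linking numbers of the oriented resolutions of whatever crossing changes one uses to reduce $B_0^{\ell,0}$ to $B_0^{\ell-1,0}$. I would look for a six-crossing-change sequence analogous to that of Figure \ref{fig:A_0l0}, apply \eqref{eq:a2_skein} to each in turn, and verify that the linking numbers sum correctly; the specific linking numbers will of course depend on the geometry of the $B$-diagram and will differ from those in Figure \ref{fig:cc_A0l0}.

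Once the $\ell$-base $a_2(B_0^{\ell,0}) = 2\ell^2 + 10\ell + 6$ is established, the remaining inductions on $r$ and $n$ proceed formally, each requiring only a single application of \eqref{eq:a2_skein} per step (together with a Reidemeister~I simplification in the $r$-step, as in Lemma \ref{lem:A}) to produce the recursions
\[
a_2(B_0^{\ell,r}) = a_2(B_0^{\ell,r-1}) + 2\ell + 2r + 4, \qquad a_2(B_n^{\ell,r}) = a_2(B_{n-1}^{\ell,r}) - 2,
\]
whose telescoping immediately yields the claimed closed form.
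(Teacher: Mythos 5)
Your proposal matches the paper's proof essentially step for step: the same three-stage induction on $\ell$, $r$, and $n$, with the same per-step increments ($4\ell+8$, $2\ell+2r+4$, and $-2$ respectively), and your consistency check against $a_2(\overline{10}_{148})=4$ is sound. The only cosmetic differences are that the paper obtains the base case by a single crossing change relating $B_0^{0,0}$ to $\overline{5}_2$ rather than a direct table identification, and the individual linking numbers in the $r$-induction for $B$ ($\ell+1$ and $3\ell+2r+4$) differ from those for $A$ even though they telescope to the same increment.
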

\begin{proof}
As in the proof of Lemma \ref{lem:A}, we proceed by induction on $\ell,$ $r,$ and $n$ in turn.\\
\textbf{Induction on $\ell$:} We show that $a_2\left(B_0^{\ell,0}\right)= 2\ell^2 + 10\ell +6.$ \\
The base case is $B_0^{0,0},$ which differs from the knot $\overline{5}_2$ by a single crossing change (see Figure \ref{fig:B000}). We deduce the Conway polynomial of $\overline{5}_2$ from Rolfsen's table \cite{Rol}, and so applying equation \eqref{eq:a2_skein} we obtain:
\[
a_2\left(B_0^{0,0}\right) = a_2\left(\overline{5}_2\right) + 4 = 4+2=6
\]
\begin{figure}
\centering
\begin{subfigure}{.65\textwidth}
\centering
\includegraphics[scale=.15]{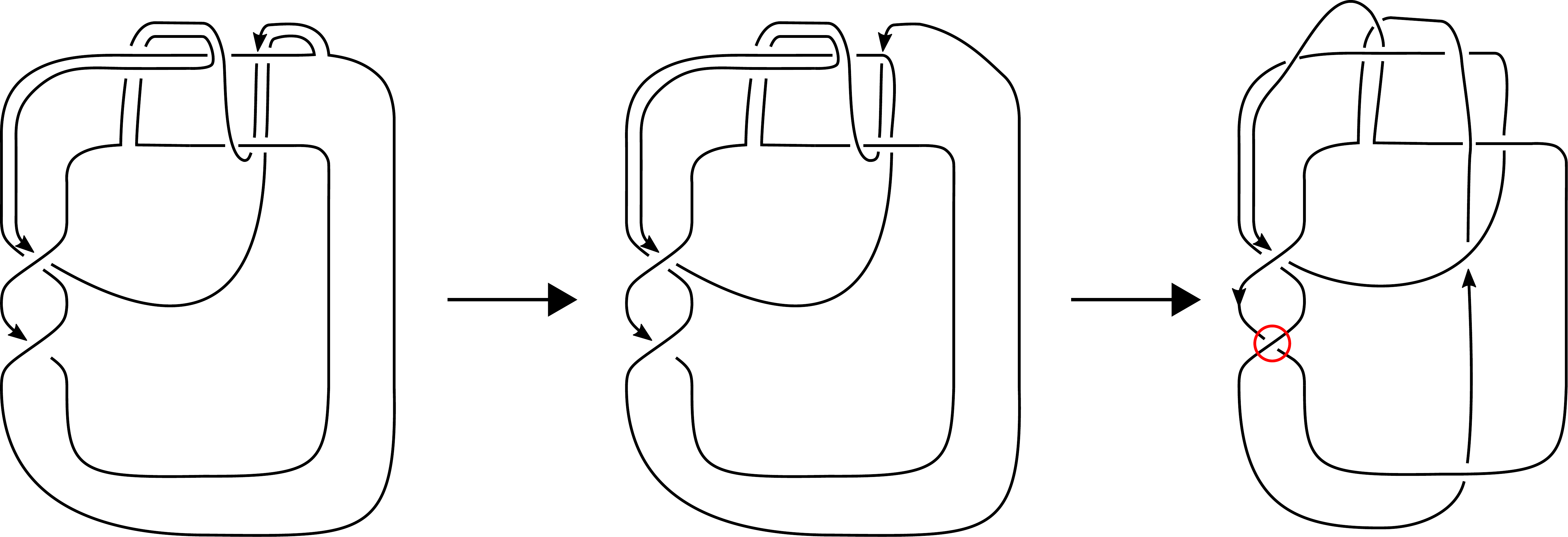}
\caption{Simplifying the diagram of $B_0^{0,0}$}
\end{subfigure}
\begin{subfigure}{.3\textwidth}
\centering
\includegraphics[scale=.15]{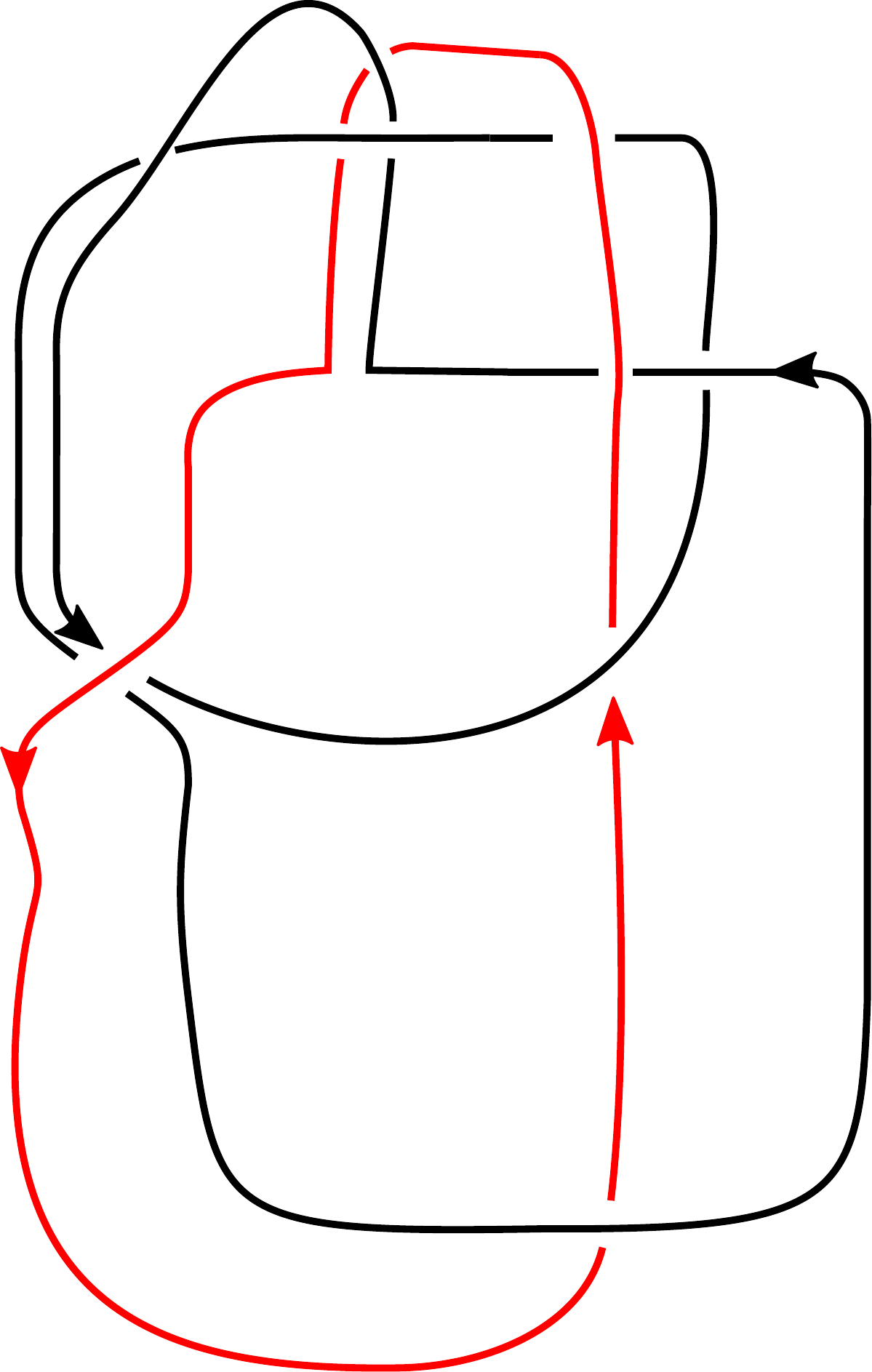}
\caption{linking number $=4$}
\end{subfigure}
\\
\begin{subfigure}{.75\textwidth}
\centering
\includegraphics[scale=.15]{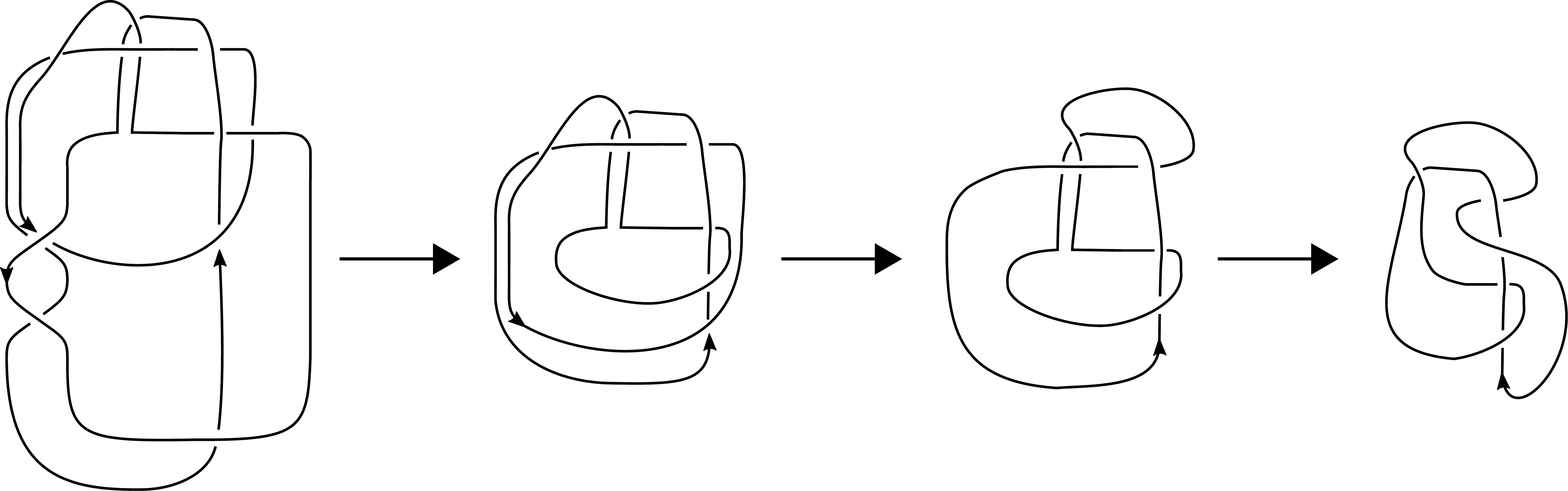}
\caption{After performing the indicated crossing change, the knot becomes $\overline{5}_2.$}
\end{subfigure}
\caption{Applying the skein relation to the knot $B_0^{0,0}.$}
\label{fig:B000}
\end{figure}
Now assume $a_2\left(B_0^{\ell-1,0}\right)=2(\ell-1)^2+10(\ell-1)+6$ for some $\ell \geq 1.$ From Figure \ref{fig:B_0l0}, we see that $B_{0}^{\ell-1,0}$ can be derived from $B_{0}^{\ell,0}$ by performing six crossing changes. Moreover, Figure \ref{fig:cc_B0l0} shows the linking numbers of the links resulting from the oriented resolutions obtained in the after performing the six crossing changes in the sequence indicated. Repeated application of equation \eqref{eq:a2_skein} gives that:
\begin{align*}
a_2\left(B_{0}^{\ell,0}\right) &= a_2\left(B_{0}^{\ell-1,0}\right) - (-2) + 4 -(-\ell-1) + (\ell+1) - (\ell+1) + (3\ell+1)\\
&= a_2\left(B_{0}^{\ell-1,0}\right) +  4\ell + 8\\
&= 2(\ell-1)^2+10(\ell-1)+6 + 4\ell + 8\\
&= 2\ell^2 + 10\ell + 6,
\end{align*}
as desired.
\begin{figure}
\centering
\includegraphics[width=.7\textwidth]{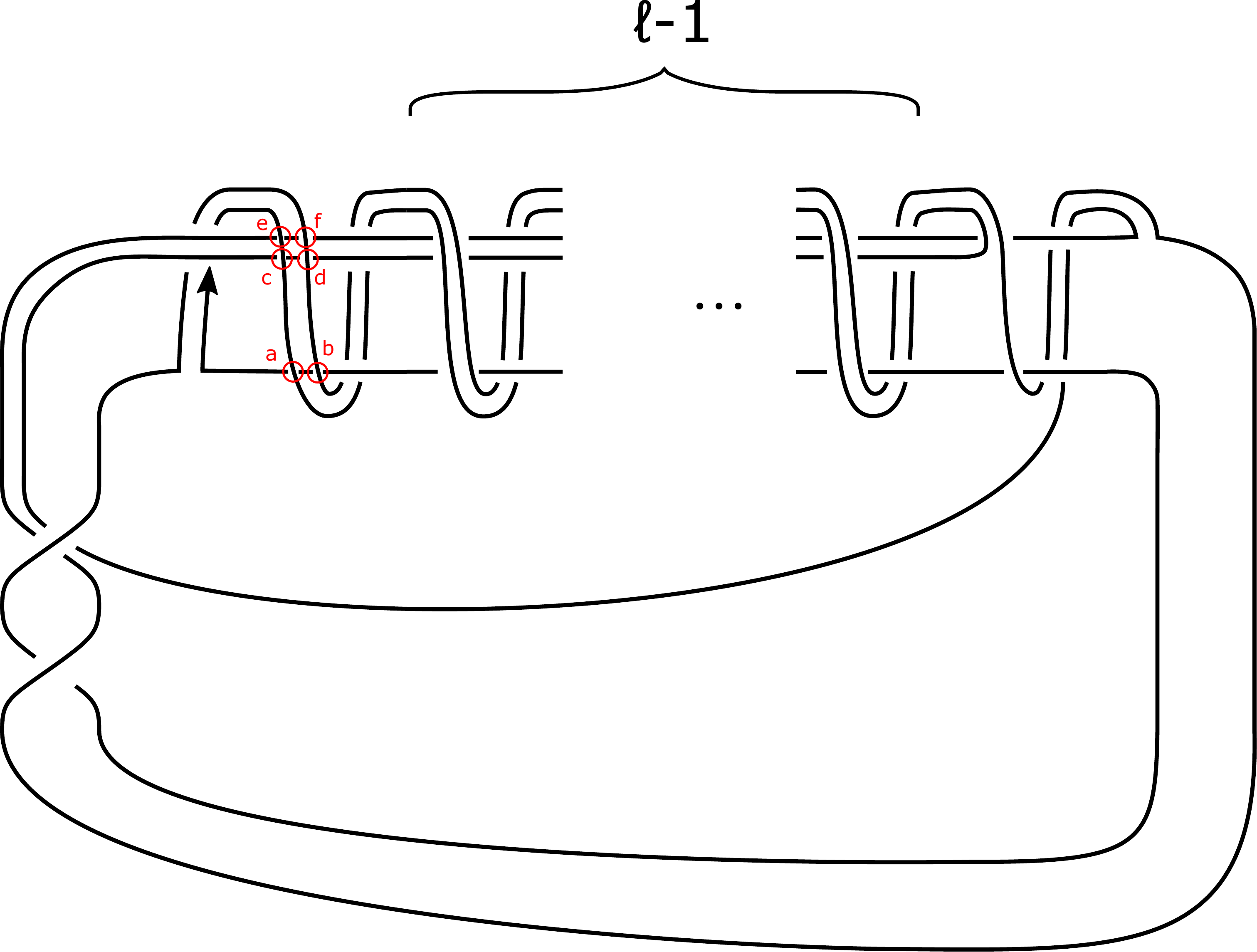}
\caption{The knot $B_{0}^{\ell,0}$. Changing the crossings indicated yields $B_{0}^{\ell-1,0}$.}
\label{fig:B_0l0}
\end{figure}

\begin{figure}
\centering
\begin{subfigure}{.45\textwidth}
\centering
\includegraphics[scale=.15]{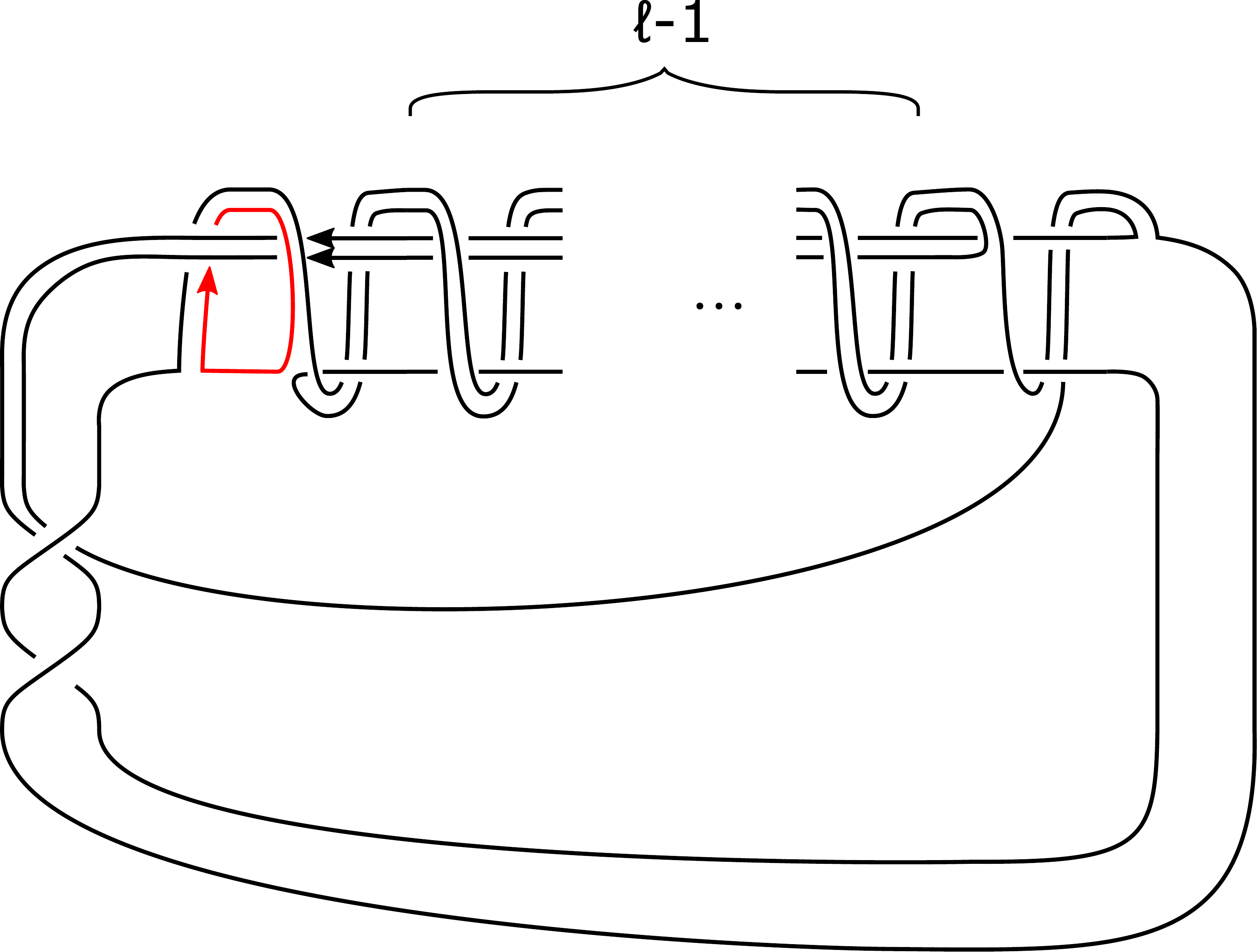}
\caption{linking number $=-2$}
\end{subfigure}
\begin{subfigure}{.45\textwidth}
\centering
\includegraphics[scale=.15]{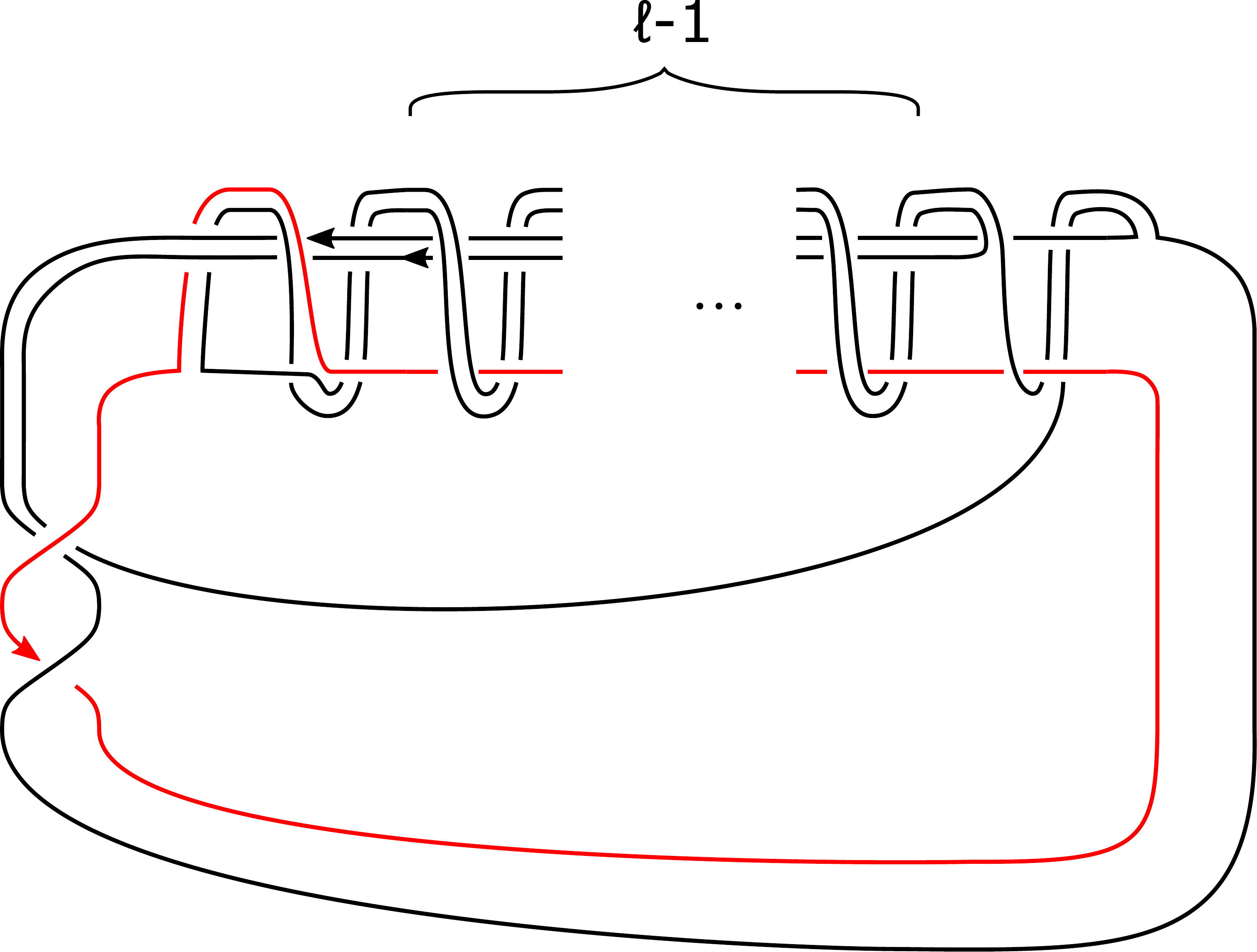}
\caption{linking number $=4$}
\end{subfigure}
\\
\begin{subfigure}{.45\textwidth}
\centering
\includegraphics[scale=.15]{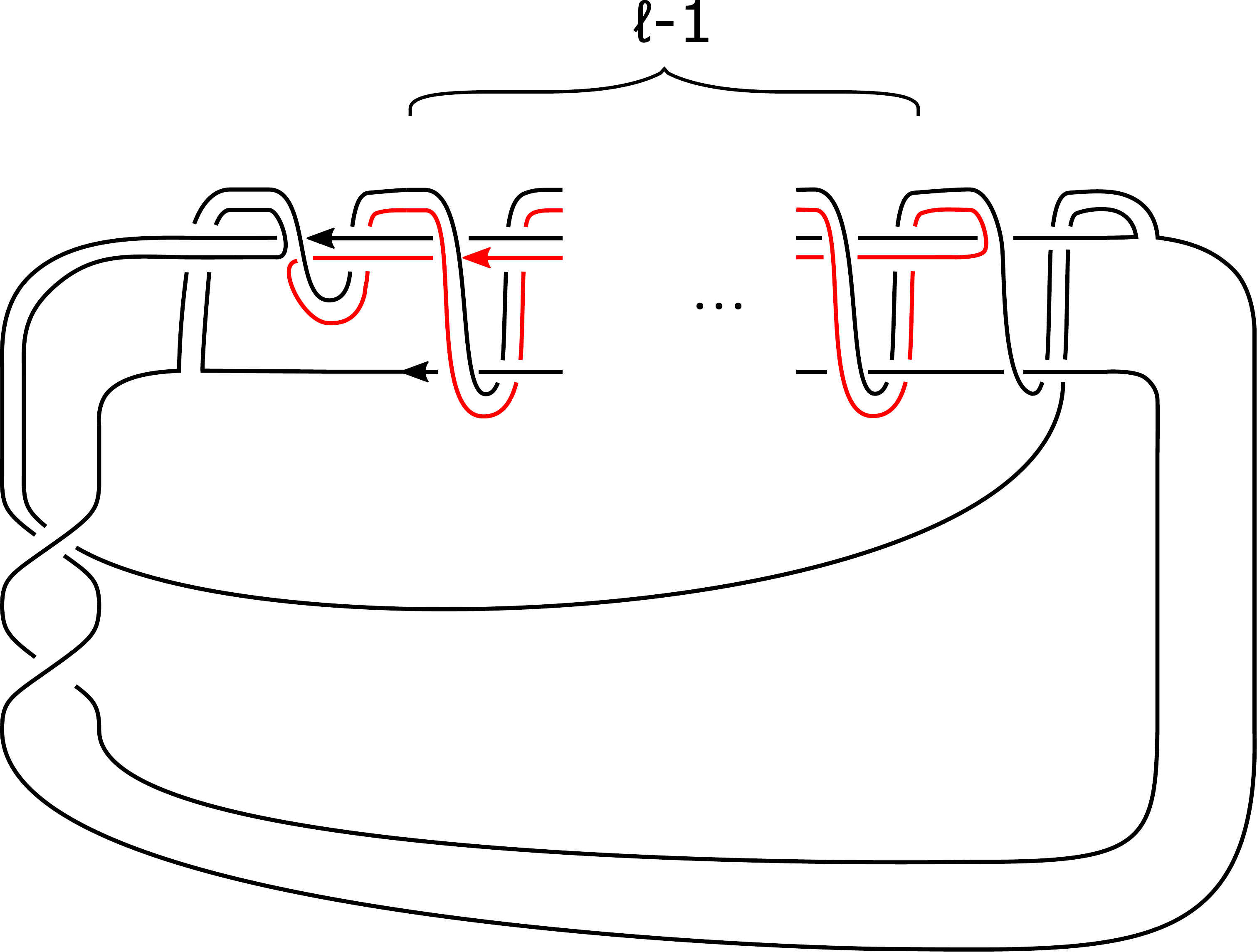}
\caption{linking number $=-\ell-1$}
\end{subfigure}
\begin{subfigure}{.45\textwidth}
\centering
\includegraphics[scale=.15]{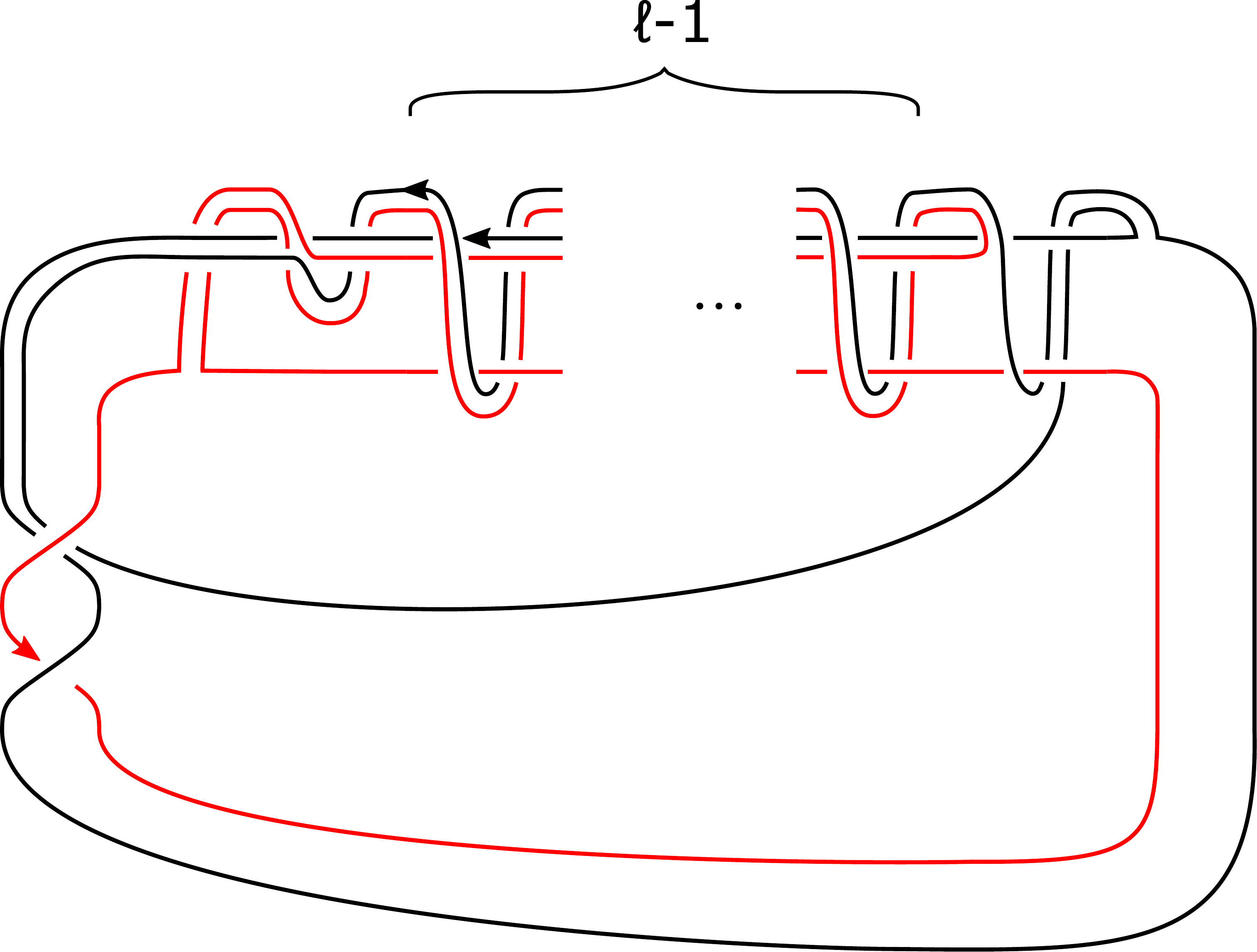}
\caption{linking number $=\ell+1$}
\end{subfigure}
\\
\begin{subfigure}{.45\textwidth}
\centering
\includegraphics[scale=.15]{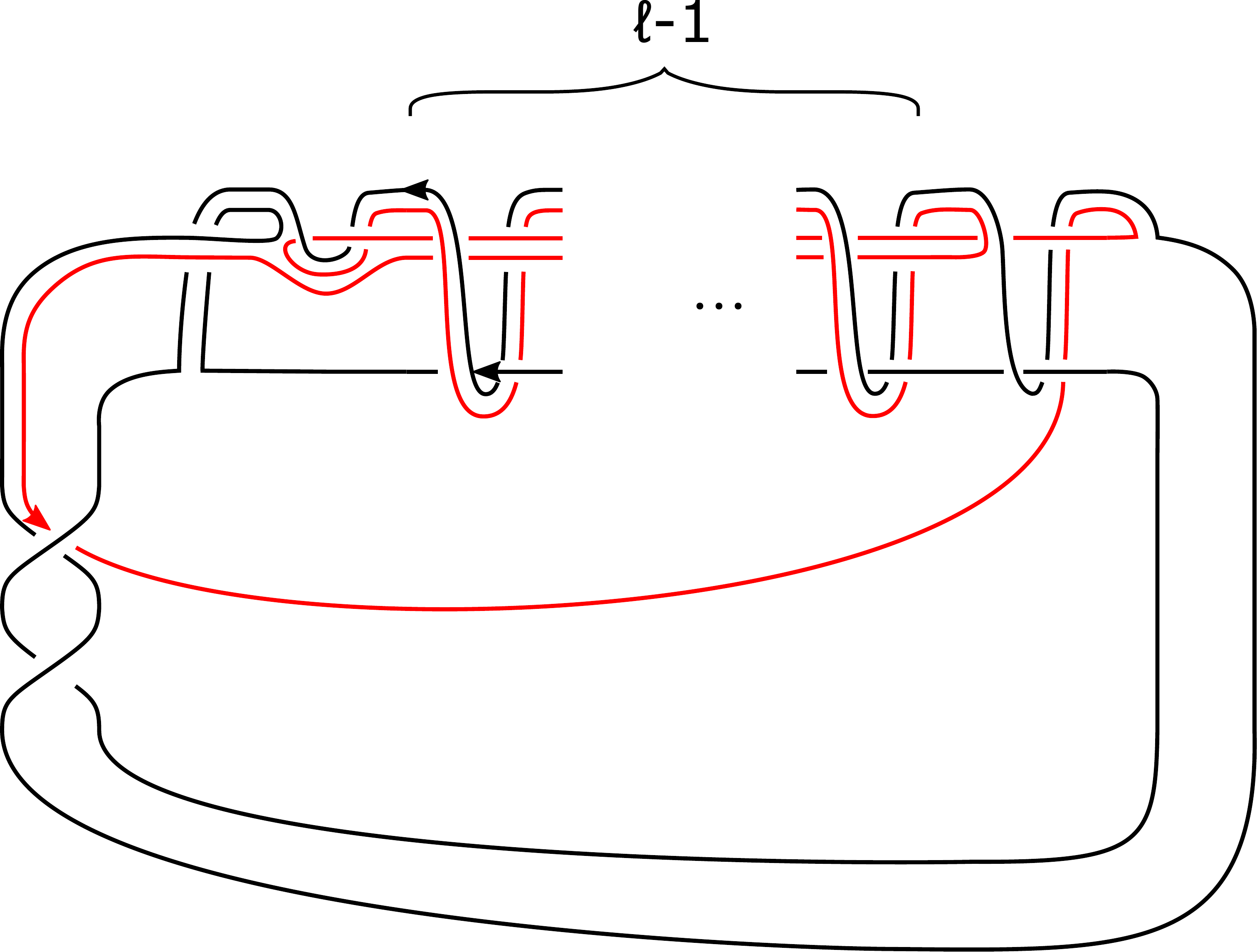}
\caption{linking number $=\ell+1$}
\end{subfigure}
\begin{subfigure}{.45\textwidth}
\centering
\includegraphics[scale=.15]{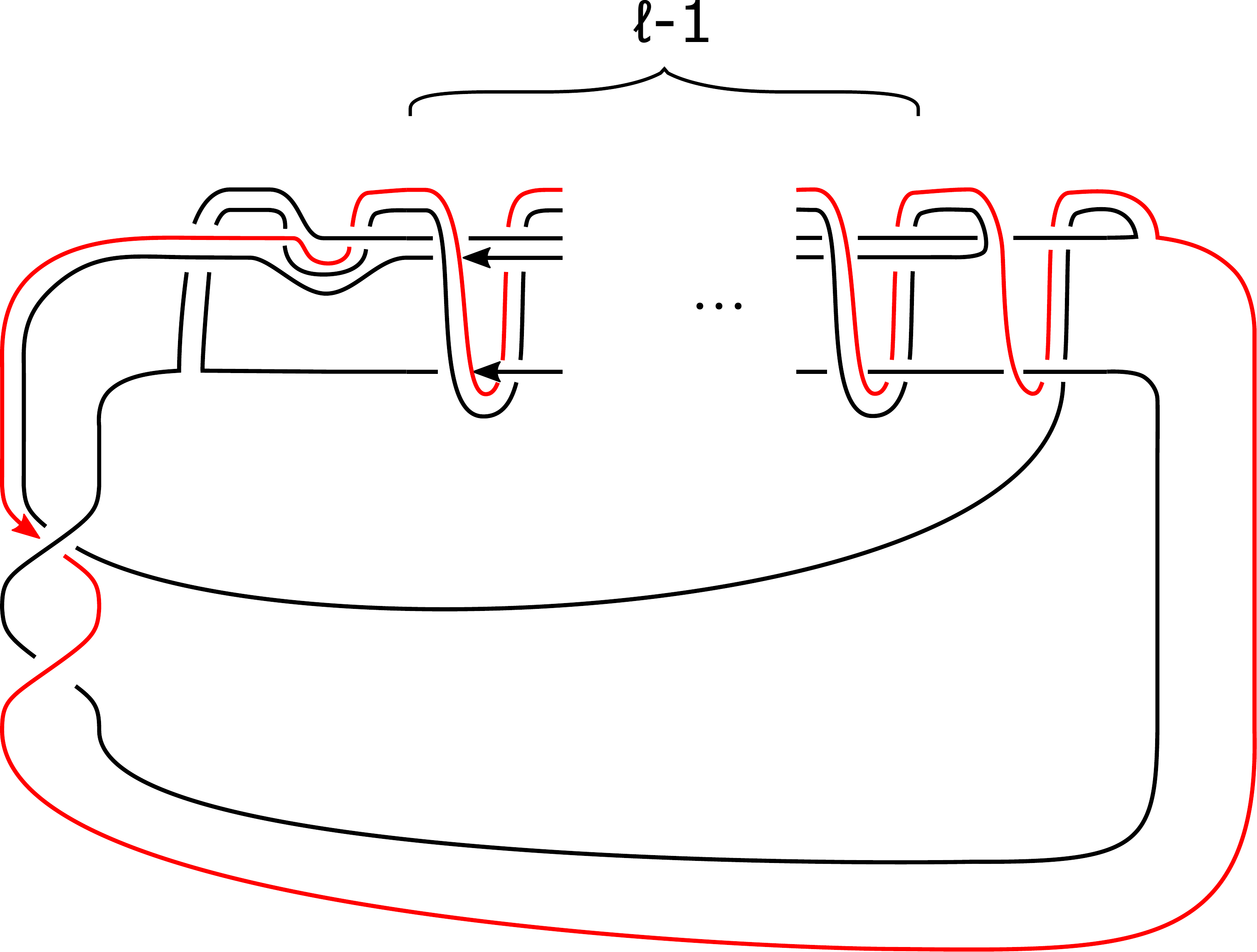}
\caption{linking number $=3\ell+1$}
\end{subfigure}
\caption{Linking numbers of the various links obtained by resolving/changing the crossings in Figure \ref{fig:B_0l0} in sequence.}
\label{fig:cc_B0l0}
\end{figure}
\textbf{Induction on $r$:} We show that $a_2\left(B_0^{\ell,r}\right)= 2\ell^2+r^2+2\ell r + 10\ell +5r+6.$ \\
The base case ($r=0$) is precisely the conclusion of the previous induction on $\ell.$ So we now assume $a_2\left(B_0^{\ell,r-1}\right)= 2\ell^2 +(r-1)^2+2\ell (r-1) + 10\ell +5(r-1)+6$ for some $r\geq 1.$ We see from Figure \ref{fig:B_0lr} that $B_{0}^{\ell,r-1}$ can be derived from $B_{0}^{\ell,r}$ by performing four crossing changes, and Figure \ref{fig:cc_B0lr} records the linking numbers for the links resulting from the oriented resolutions obtained from the indicated sequence of crossing changes (crossing change d is simply a Reidemeister I move).  Once again, repeatedly applying equation \eqref{eq:a2_skein} gives that:
\begin{align*}
a_2\left(B_{0}^{\ell,r}\right) &= a_2\left(B_{0}^{\ell,r-1}\right) - (\ell+1) + (3\ell+2r+4) +1\\
&= a_2\left(B_{0}^{\ell,r-1}\right) +2\ell+ 2r+4\\
&= 2\ell^2 +(r-1)^2+2\ell (r-1) + 10\ell +5(r-1)+6 +2\ell+ 2r+4\\
&=2\ell^2 + r^2 + 2\ell r + 6\ell +5r +6,
\end{align*}
as desired.
\begin{figure}
\centering
\includegraphics[width=.7\textwidth]{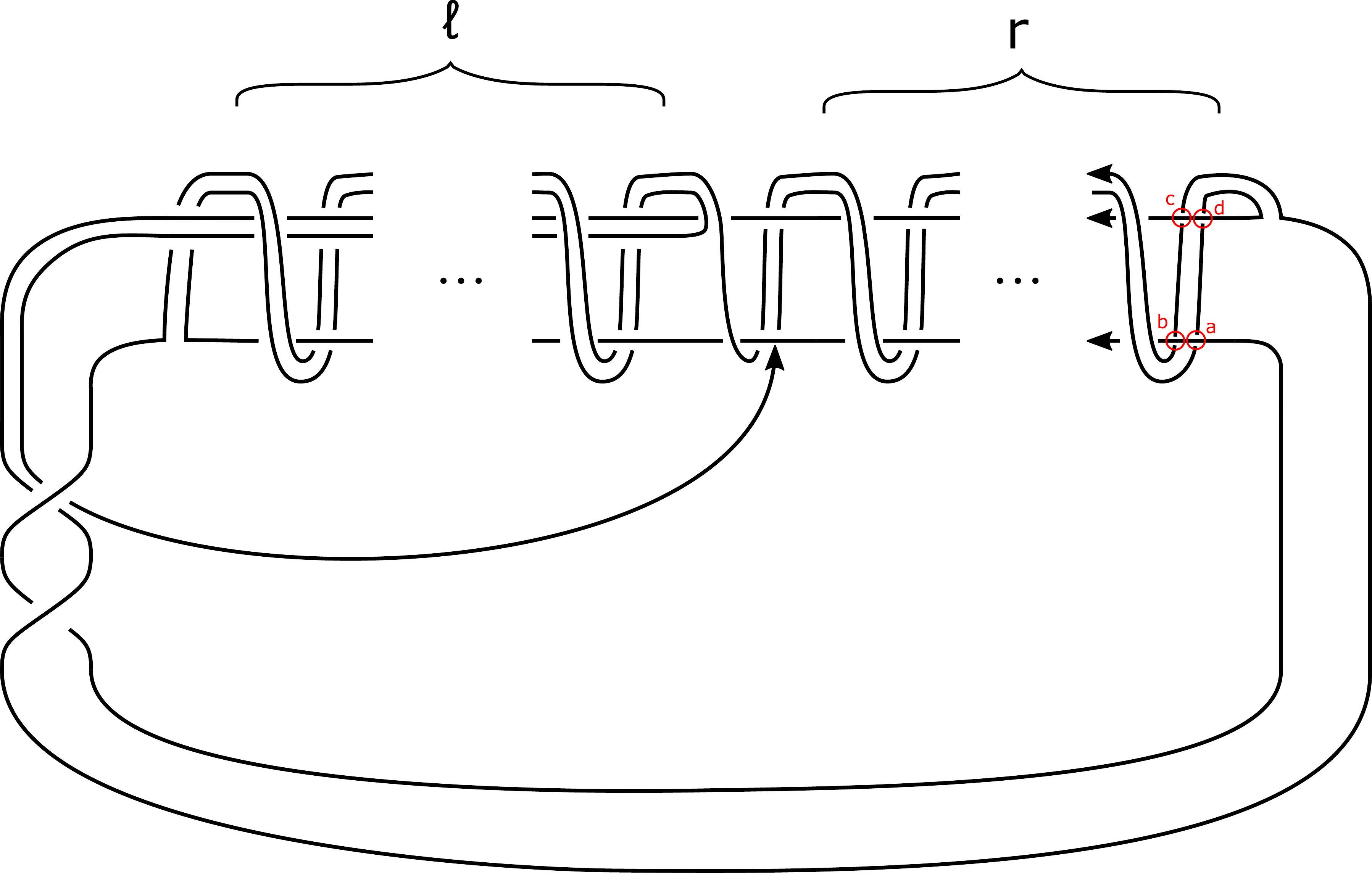}
\caption{The knot $B_{0}^{\ell,r}$. Changing the crossings indicated yields $B_{0}^{\ell,r-1}$.}
\label{fig:B_0lr}
\end{figure}

\begin{figure}
\centering
\begin{subfigure}{.45\textwidth}
\centering
\includegraphics[scale=.15]{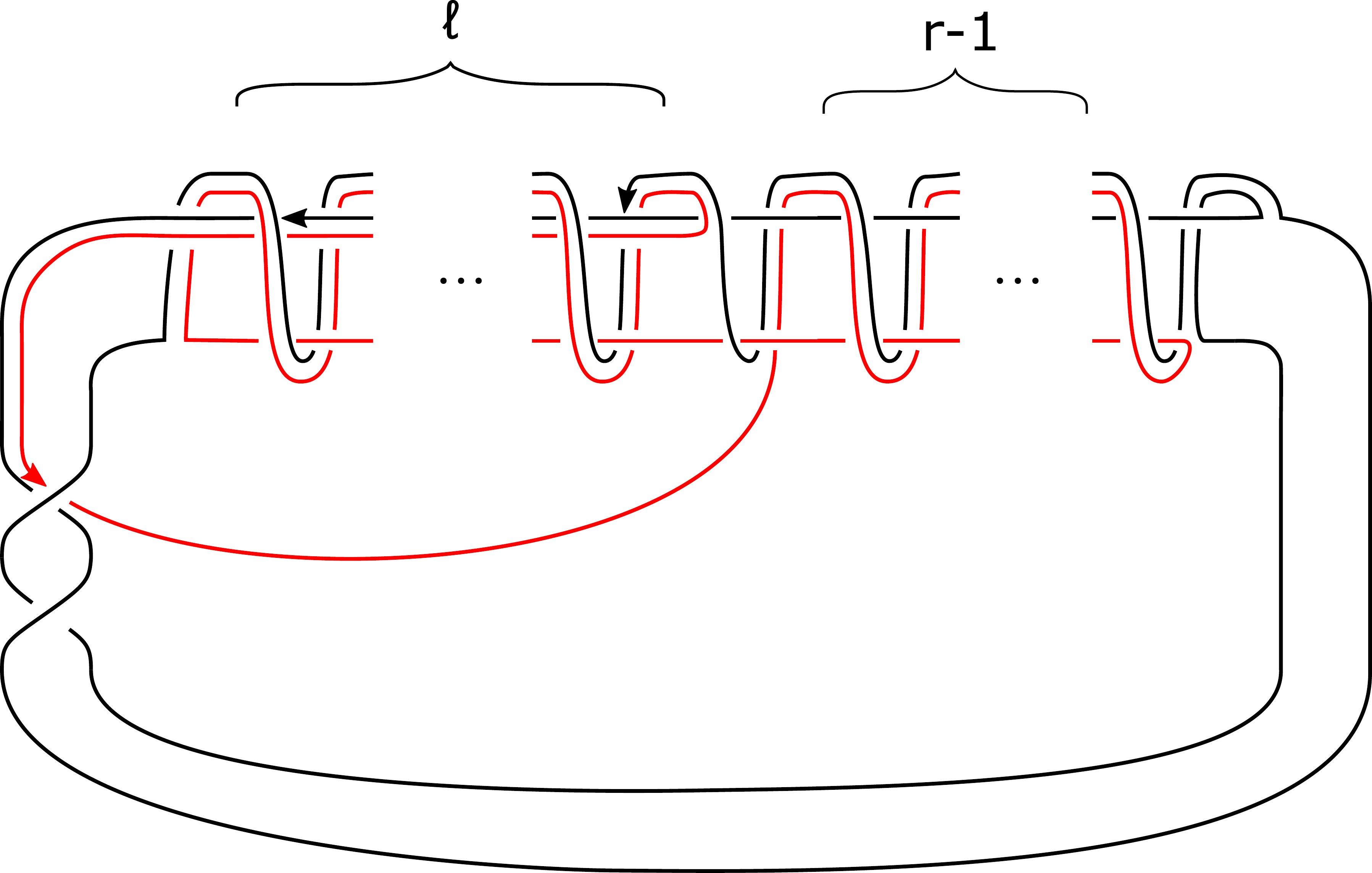}
\caption{linking number $=\ell+1$}
\end{subfigure}
\begin{subfigure}{.45\textwidth}
\centering
\includegraphics[scale=.15]{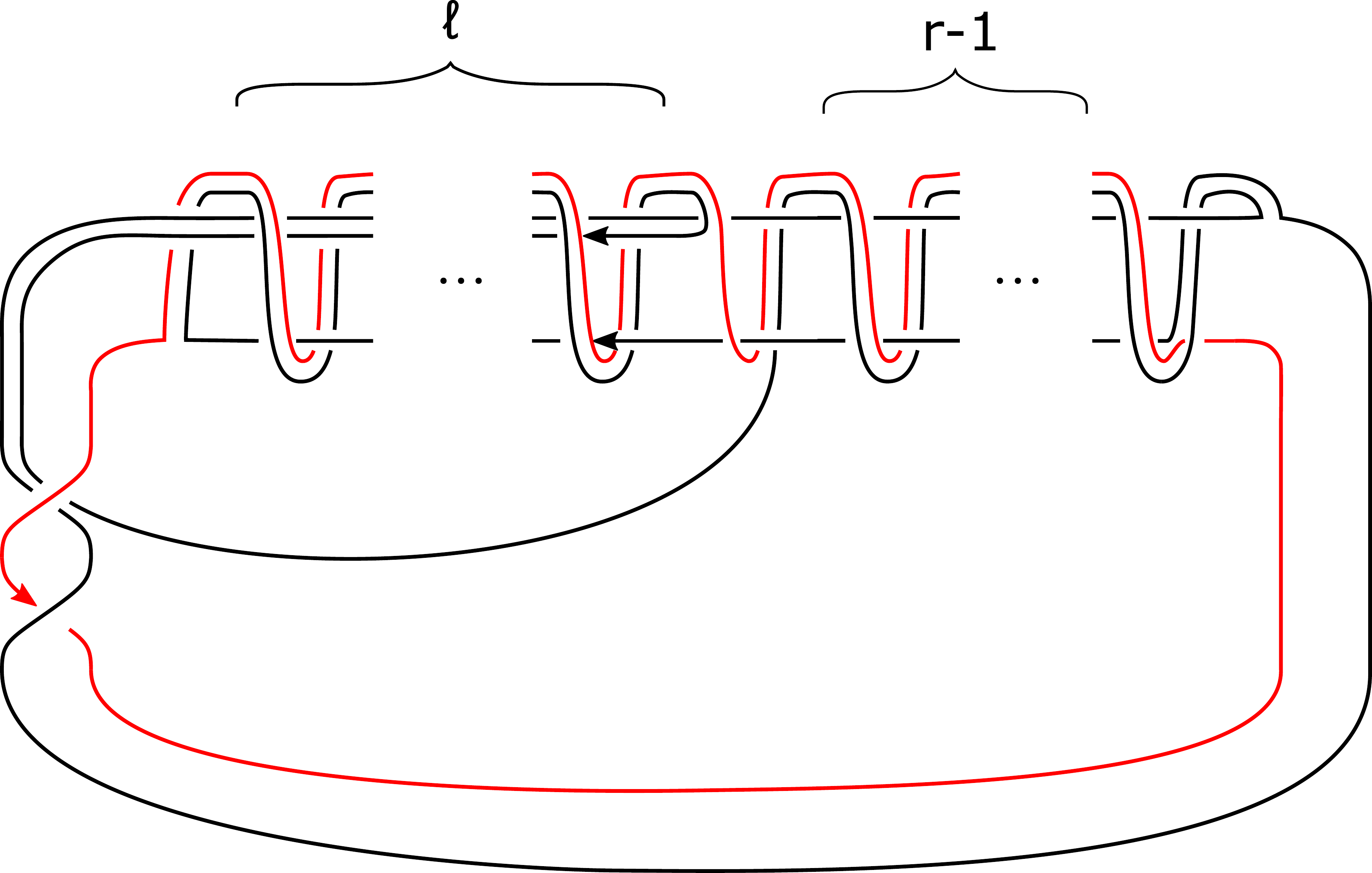}
\caption{linking number $=3\ell+2r+4$}
\end{subfigure}
\\
\begin{subfigure}{.45\textwidth}
\centering
\includegraphics[scale=.15]{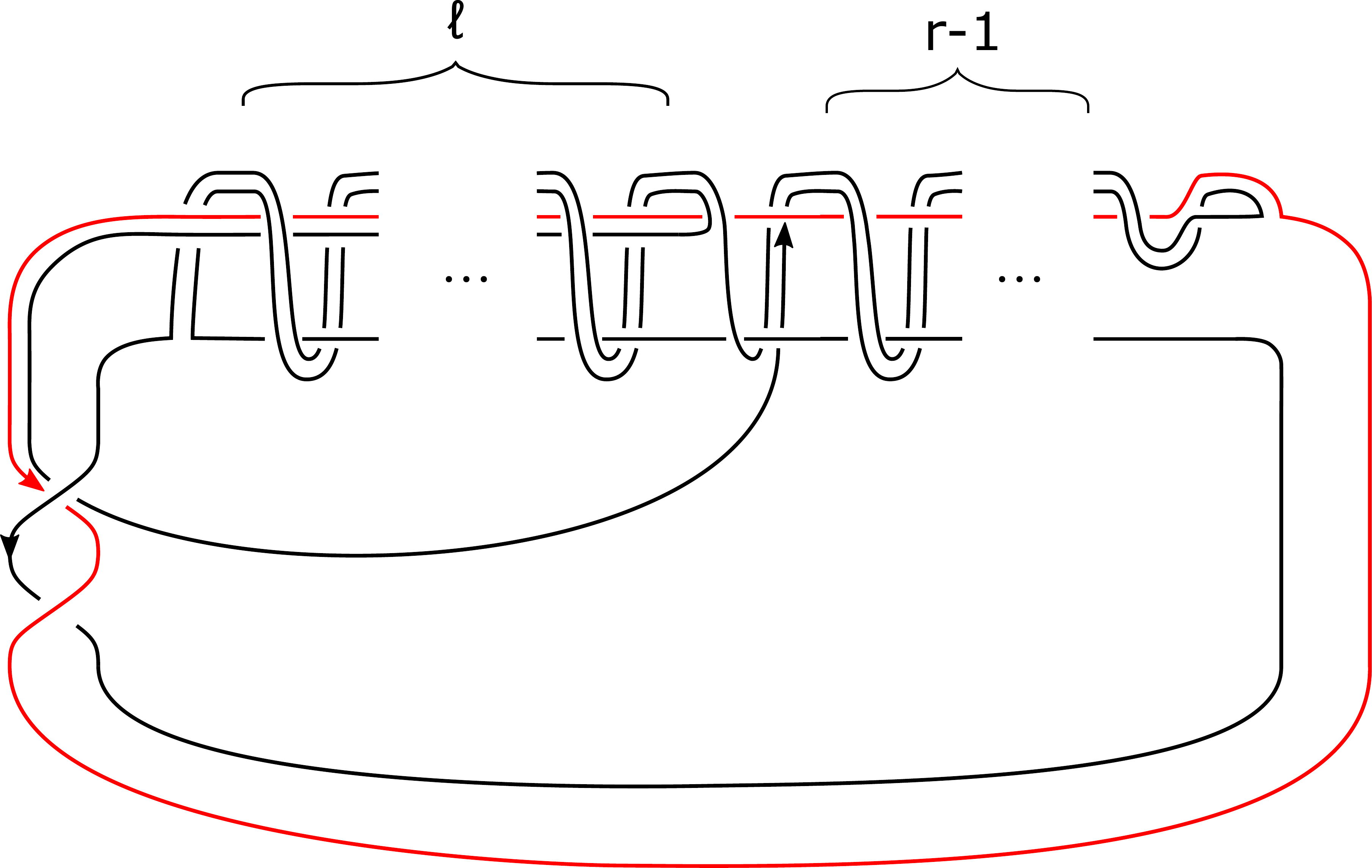}
\caption{linking number $=1$}
\end{subfigure}
\caption{{Linking numbers of the various links obtained by resolving/changing the crossings in Figure \ref{fig:B_0lr} in sequence.}}
\label{fig:cc_B0lr}
\end{figure}
\textbf{Induction on $n$:} We show that $a_2\left(B_0^{\ell,r}\right)= 2\ell^2+r^2+2\ell r + 10\ell +5r-2n+6.$ \\
The base case ($n=0$) is precisely the conclusion of the previous induction on $r.$ So we now assume $a_2\left(B_{n-1}^{\ell,r}\right)= 2\ell^2+r^2+2\ell r + 10\ell +5r-2(n-1)+6$ for some $n\geq 1.$ We see from Figure \ref{fig:B_nlr_ind} that $B_{n-1}^{\ell,r}$ can be derived from $B_{n}^{\ell,r}$ by performing a crossing change, and Figure \ref{fig:cc_Bnlr} records the linking number for the link resulting from the oriented resolution.  Once again, applying equation \eqref{eq:a2_skein} gives that:
\begin{align*}
a_2\left(B_{n}^{\ell,r}\right) &= a_2\left(B_{n-1}^{\ell,r}\right) - 2\\
&=2\ell^2+r^2+2\ell r + 10\ell +5r-2(n-1)+6 -2\\
&=2\ell^2+r^2+2\ell r + 10\ell +5r-2n+6,
\end{align*}
as desired.
\begin{figure}
\centering
\begin{subfigure}{.45\textwidth}
\centering
\includegraphics[scale=.15]{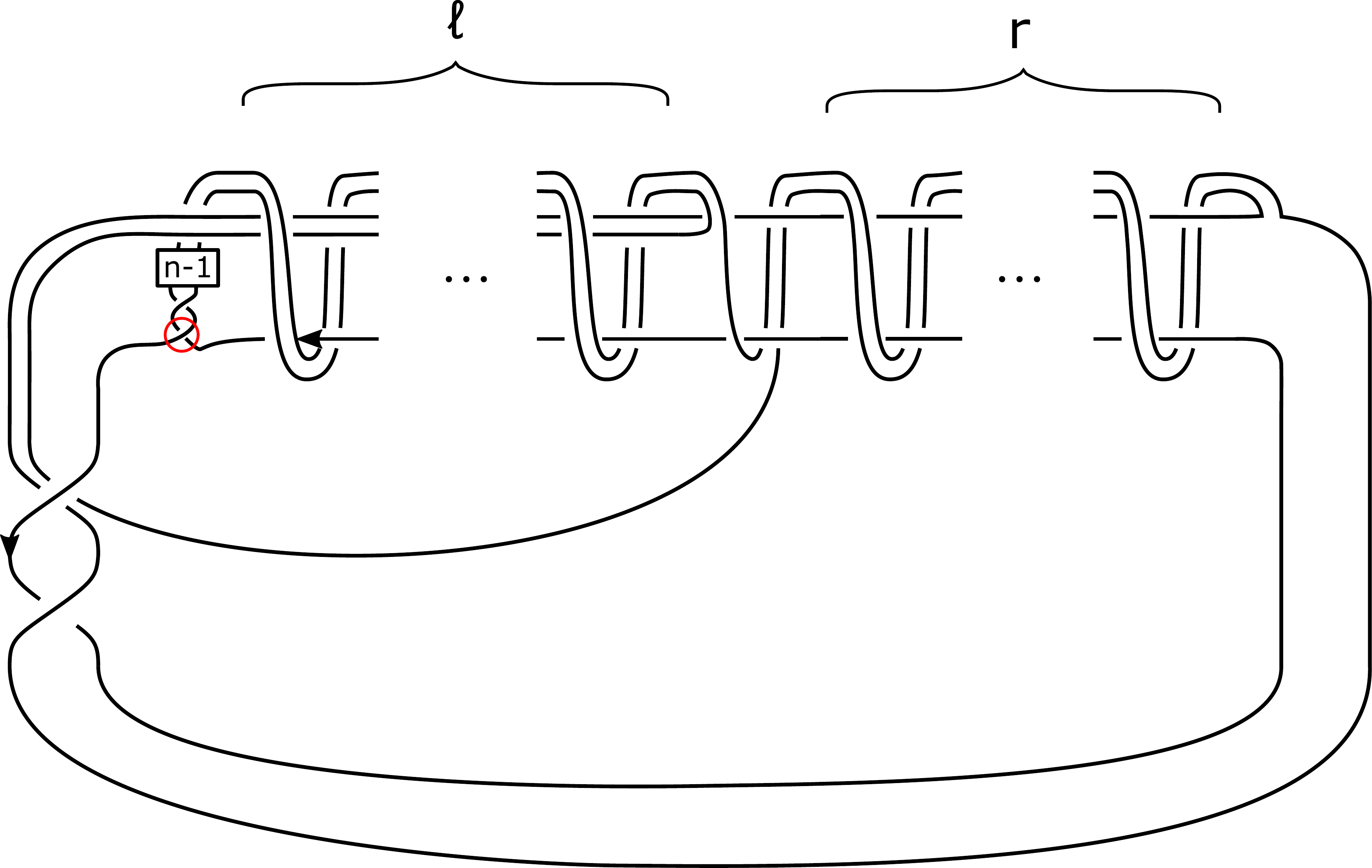}
\caption{The knot $B_{n}^{\ell,r}$. Changing the crossing indicated yields $B_{n-1}^{\ell,r-1}$.}
\label{fig:B_nlr_ind}
\end{subfigure}
\begin{subfigure}{.45\textwidth}
\centering
\includegraphics[scale=.15]{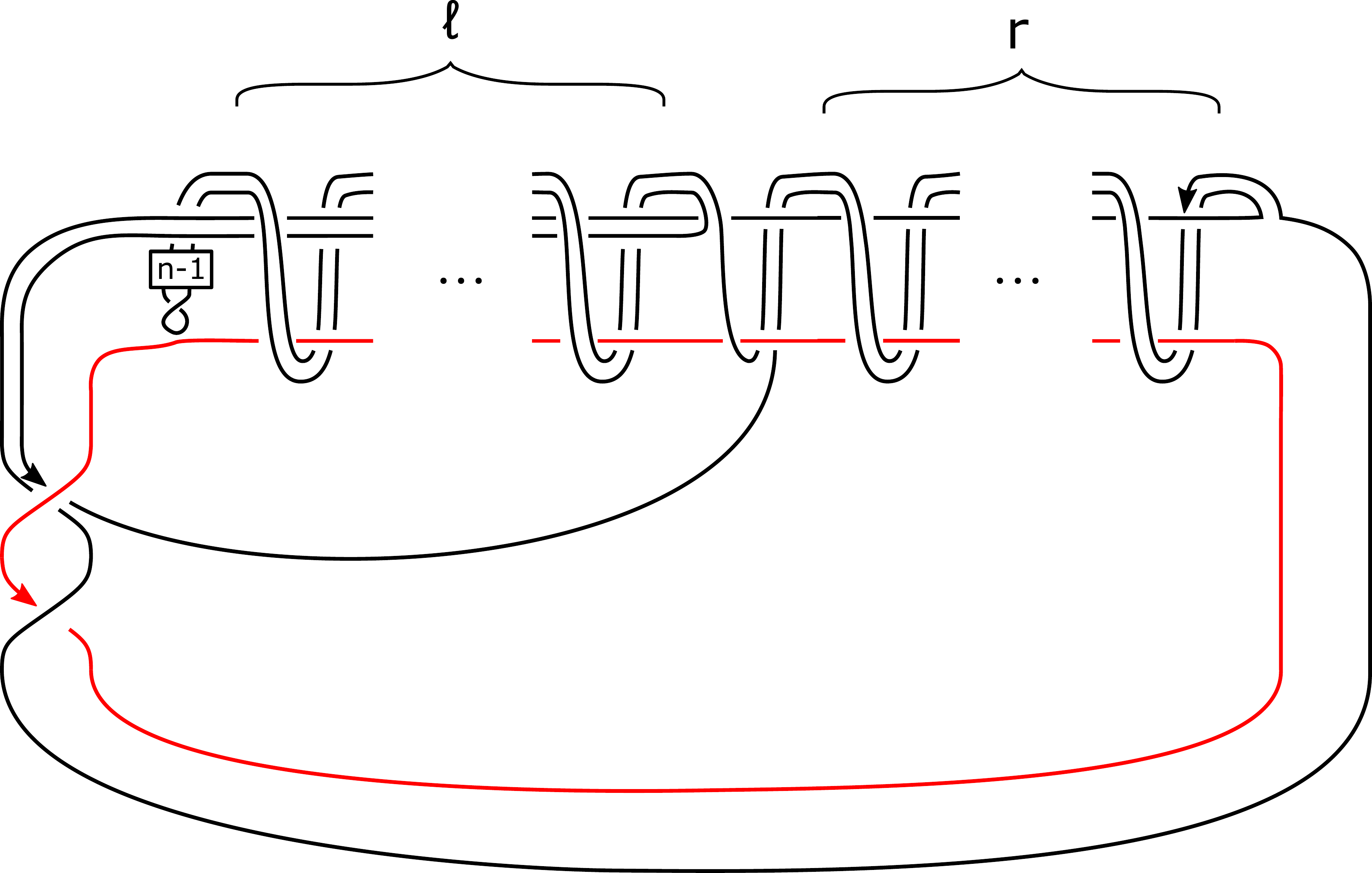}
\caption{linking number $=2$}
\label{fig:cc_Bnlr}
\end{subfigure}
\caption{Linking number obtained by resolving the indicated crossing.}
\end{figure}
\end{proof}

\subsection{Proof of Theorem \ref{thm:main}}

We now prove the main theorem:
\main*

\begin{proof}
The case of $n=1$ was Lemma \ref{lem:K1}. So we suppose $n\geq 2.$ By Theorem \ref{thm:BM} and Lemma \ref{lem:00}, it suffices to show that $\beta_{K_n}$ is not isotopic to a meridian of $K_n.$ If it were, then this isotopy would preserve the linking number of $K_n \cup \beta_{K_n}.$ Hence, by Corollary \ref{cor:a3}, the right-hand-side of \eqref{eq:a3} must equal zero. Applying Lemmas \ref{lem:A} and \ref{lem:B}, we compute:
\begin{align*}
\sum_{k=1}^n &\left( a_2\left(A_n^{n-k,k-1}\right)-a_2\left(B_n^{n-k,k-1}\right)\right)\\
 &=\sum_{k=1}^n \left(4(n-k)^2+(k-1)^2+2(n-k) (k-1) + 6(n-k) +5(k-1)-2n+6\right)\\
 &\qquad -\sum_{k=1}^n  \left(2(n-k)^2+(k-1)^2+2(n-k) (k-1) + 10(n-k) +5(k-1)-2n+6\right)\\
 &=\sum_{k=1}^n \left(2(n-k)^2-4(n-k)\right)\\
 &=\sum_{j=0}^{n-1} (2j^2-4j)\\
 &= \frac{(n-1)(n)(2n-1)}{3}- 2(n)(n-1)\\
 &=n(n-1)\left(\frac{2n-7}{3}\right),
\end{align*}
which is nonzero whenever $n\geq 2.$ Hence, $\beta_{K_n}$ is not isotopic to a meridian of $K_n$ in this case as well.
\end{proof}

\subsection*{Acknowledgements}
The author would like to thank Professor Zolt\'{a}n Szab\'{o}  for  encouraging work on this problem. Thanks also to Sucharit Sarkar for comments on an earlier version of this paper as well as for bringing the author's attention to the family of 2-bridge knots as a potential generalization of these results. The author also thanks Xiliu Yang for pointing out a small error in an earlier draft. This work was supported by the NSF RTG grants DMS-1904628, DMS-1502424, and DMS-1905717.

\bibliographystyle{aomplain}
\bibliography{CharSlpReferences}

\providecommand{\bysame}{\leavevmode\hbox to3em{\hrulefill}\thinspace}
\providecommand{\noopsort}[1]{}
\providecommand{\mr}[1]{\href{http://www.ams.org/mathscinet-getitem?mr=#1}{MR~#1}}
\providecommand{\zbl}[1]{\href{http://www.zentralblatt-math.org/zmath/en/search/?q=an:#1}{Zbl~#1}}
\providecommand{\jfm}[1]{\href{http://www.emis.de/cgi-bin/JFM-item?#1}{JFM~#1}}
\providecommand{\arxiv}[1]{\href{http://www.arxiv.org/abs/#1}{arXiv~#1}}
\providecommand{\doi}[1]{\url{https://doi.org/#1}}
\providecommand{\MR}{\relax\ifhmode\unskip\space\fi MR }
\providecommand{\MRhref}[2]{%
  \href{http://www.ams.org/mathscinet-getitem?mr=#1}{#2}
}
\providecommand{\href}[2]{#2}
\begin{thebibliography}{10}

\bibitem{AJLO}
\bgroup\scshape{}T.~Abe\egroup{}, \bgroup\scshape{}I.~D. Jong\egroup{},
  \bgroup\scshape{}J.~Luecke\egroup{}, and
  \bgroup\scshape{}J.~Osoinach\egroup{}, Infinitely many knots admitting the
  same integer surgery and a four-dimensional extension,  \emph{Int. Math. Res.
  Not. IMRN} no.~22 (2015), 11667--11693. \mr{3456699}.
  \doi{10.1093/imrn/rnv008}.

\bibitem{AJOT}
\bgroup\scshape{}T.~Abe\egroup{}, \bgroup\scshape{}I.~D. Jong\egroup{},
  \bgroup\scshape{}Y.~Omae\egroup{}, and \bgroup\scshape{}M.~Takeuchi\egroup{},
  Annulus twist and diffeomorphic 4-manifolds,  \emph{Math. Proc. Cambridge
  Philos. Soc.} \textbf{155} no.~2 (2013), 219--235. \mr{3091516}.
  \doi{10.1017/S0305004113000194}.

\bibitem{AT2}
\bgroup\scshape{}T.~Abe\egroup{} and \bgroup\scshape{}K.~Tagami\egroup{},
  Fibered knots with the same 0-surgery and the slice-ribbon conjecture,
  \emph{Math. Res. Lett.} \textbf{23} no.~2 (2016), 303--323. \mr{3512887}.
  \doi{10.4310/MRL.2016.v23.n2.a1}.

\bibitem{AT}
\bgroup\scshape{}T.~Abe\egroup{} and \bgroup\scshape{}K.~Tagami\egroup{}, Knots
  with infinitely many non-characterizing slopes,  \emph{Kodai Math. J.}
  \textbf{44} no.~3 (2021), 395--421. \mr{4332684}.
  \doi{10.2996/kmj/kmj44301}.

\bibitem{BM}
\bgroup\scshape{}K.~L. Baker\egroup{} and \bgroup\scshape{}K.~Motegi\egroup{},
  Noncharacterizing slopes for hyperbolic knots,  \emph{Algebr. Geom. Topol.}
  \textbf{18} no.~3 (2018), 1461--1480. \mr{3784010}.
  \doi{10.2140/agt.2018.18.1461}.

\bibitem{BS}
\bgroup\scshape{}J.~A. Baldwin\egroup{} and \bgroup\scshape{}S.~Sivek\egroup{},
  Characterizing slopes for $5_2$, 2022, preprint.

\bibitem{GM}
\bgroup\scshape{}R.~E. Gompf\egroup{} and
  \bgroup\scshape{}K.~Miyazaki\egroup{}, Some well-disguised ribbon knots,
  \emph{Topology Appl.} \textbf{64} no.~2 (1995), 117--131. \mr{1340864}.
  \doi{10.1016/0166-8641(94)00103-A}.

\bibitem{KMOS}
\bgroup\scshape{}P.~Kronheimer\egroup{}, \bgroup\scshape{}T.~Mrowka\egroup{},
  \bgroup\scshape{}P.~Ozsv\'{a}th\egroup{}, and
  \bgroup\scshape{}Z.~Szab\'{o}\egroup{}, Monopoles and lens space surgeries,
  \emph{Ann. of Math. (2)} \textbf{165} no.~2 (2007), 457--546. \mr{2299739}.
  \doi{10.4007/annals.2007.165.457}.

\bibitem{Lack}
\bgroup\scshape{}M.~Lackenby\egroup{}, Every knot has characterising slopes,
  \emph{Math. Ann.} \textbf{374} no.~1-2 (2019), 429--446. \mr{3961316}.
  \doi{10.1007/s00208-018-1757-x}.

\bibitem{KI}
\bgroup\scshape{}C.~Livingston\egroup{} and \bgroup\scshape{}A.~H.
  Moore\egroup{}, Knotinfo: Table of knot invariants. Available at
  \url{http://www.indiana.edu/~knotinfo}.

\bibitem{McC2}
\bgroup\scshape{}D.~McCoy\egroup{}, On the characterising slopes of hyperbolic
  knots,  \emph{Math. Res. Lett.} \textbf{26} no.~5 (2019), 1517--1526.
  \mr{4049819}.  \doi{10.4310/MRL.2019.v26.n5.a12}.

\bibitem{McC}
\bgroup\scshape{}D.~McCoy\egroup{}, Non-integer characterizing slopes for torus
  knots,  \emph{Comm. Anal. Geom.} \textbf{28} no.~7 (2020), 1647--1682.
  \mr{4184829}.  \doi{10.4310/CAG.2020.v28.n7.a5}.

\bibitem{McC3}
\bgroup\scshape{}D.~McCoy\egroup{}, Characterizing slopes for the
  $(-2,3,7)$-pretzel knot, 2022, preprint. Available at
  \url{https://arxiv.org/abs/2209.10598}.

\bibitem{MP}
\bgroup\scshape{}A.~N. Miller\egroup{} and
  \bgroup\scshape{}L.~Piccirillo\egroup{}, Knot traces and concordance,
  \emph{J. Topol.} \textbf{11} no.~1 (2018), 201--220. \mr{3784230}.
  \doi{10.1112/topo.12054}.

\bibitem{NZ}
\bgroup\scshape{}Y.~Ni\egroup{} and \bgroup\scshape{}X.~Zhang\egroup{},
  Characterizing slopes for torus knots,  \emph{Algebr. Geom. Topol.}
  \textbf{14} no.~3 (2014), 1249--1274. \mr{3190593}.
  \doi{10.2140/agt.2014.14.1249}.

\bibitem{NZ2}
\bgroup\scshape{}Y.~Ni\egroup{} and \bgroup\scshape{}X.~Zhang\egroup{},
  Characterizing slopes for torus knots, {II}, 2021, preprint. Available at
  \url{https://arxiv.org/abs/2106.02806}.

\bibitem{Oso}
\bgroup\scshape{}J.~K. Osoinach, Jr.\egroup{}, Manifolds obtained by surgery on
  an infinite number of knots in {$S^3$},  \emph{Topology} \textbf{45} no.~4
  (2006), 725--733. \mr{2236375}.  \doi{10.1016/j.top.2006.02.001}.

\bibitem{OSzChar}
\bgroup\scshape{}P.~Ozsv\'{a}th\egroup{} and
  \bgroup\scshape{}Z.~Szab\'{o}\egroup{}, The {D}ehn surgery characterization
  of the trefoil and the figure eight knot,  \emph{J. Symplectic Geom.}
  \textbf{17} no.~1 (2019), 251--265. \mr{3956375}.
  \doi{10.4310/JSG.2019.v17.n1.a6}.

\bibitem{Pic}
\bgroup\scshape{}L.~Piccirillo\egroup{}, The {C}onway knot is not slice,
  \emph{Ann. of Math. (2)} \textbf{191} no.~2 (2020), 581--591. \mr{4076631}.
  \doi{10.4007/annals.2020.191.2.5}.

\bibitem{Rol}
\bgroup\scshape{}D.~Rolfsen\egroup{}, \emph{Knots and Links}, \textbf{346.H},
  AMS Chelsea, Providence, R.I., 2003.

\bibitem{Tag}
\bgroup\scshape{}K.~Tagami\egroup{}, On annulus presentations, dualizable
  patterns and {RGB}-diagrams, 2022, preprint. Available at
  \url{https://arxiv.org/abs/2010.13283}.

\bibitem{Ter}
\bgroup\scshape{}M.~Teragaito\egroup{}, A {S}eifert fibered manifold with
  infinitely many knot-surgery descriptions,  \emph{Int. Math. Res. Not. IMRN}
  no.~9 (2007), Art. ID rnm 028, 16. \mr{2347296}.  \doi{10.1093/imrn/rnm028}.

\end{thebibliography}

\end{document}